\theoremstyle{plain}
\newtheorem{theorem}{Theorem}[section]
\newtheorem{lemma}[theorem]{Lemma}
\newtheorem{corollary}[theorem]{Corollary}
\newtheorem{proposition}[theorem]{Proposition}
\theoremstyle{definition}
\newtheorem{definition}[theorem]{Definition}
\newtheorem{example}[theorem]{Example}
\theoremstyle{remark}
\newtheorem{remark}{Remark}
\DeclareMathOperator{\supp}{supp}
\newcommand{\matriz}[1]{\mathcal{#1}}
\newcommand{\indice}[1]{\mathbb{#1}}
\newcommand{\refre}[1]{#1}
\begin{document}

\title{Optimization Methods for Dirichlet Control Problems
}

\author{
\name{Mariano Mateos}
\affil{Dpto. de Matem\'{a}ticas,
Universidad de Oviedo, Campus de Gij\'on, 33203 Gij\'on, Spain \thanks{
Email: mmateos@uniovi.es.}}}

\maketitle

\begin{abstract}
We discuss several optimization procedures to solve finite element approximations of linear-quadratic Dirichlet optimal control problems governed by an elliptic partial differential equation posed on a 2D or 3D Lipschitz domain. The control is discretized explicitely using continuous piecewise linear approximations. Unconstrained, control-constrained, state-constrained and control-and-state constrained problems are analyzed. A preconditioned conjugate method for a reduced problem in the control variable is proposed to solve the unconstrained problem, whereas semismooth Newton methods are discussed for the solution of constrained problems. State constraints are treated via a Moreau-Yosida penalization. Convergence is studied for both the continuous problems and the finite dimensional approximations. In the finite dimensional case, we are able to show convergence of the optimization procedures even in the absence of Tikhonov regularization parameter. Computational aspects are also treated and several numerical examples are included to illustrate the theoretical results.
\end{abstract}

\begin{keywords}
Dirichlet optimal control, discretization, constrained optimization, preconditionate conjugate gradient, semismooth Newton methods
\end{keywords}

\begin{amscode}
 49J20, 
49M05, 
49M15,  
65N30 
\end{amscode}
\medskip

\pagestyle{myheadings}
\thispagestyle{plain}
\markboth{M. Mateos} {Optimization for Dirichlet Control Problems}

\section{Introduction}
\label{S0}
Only in the last ten years it has been possible to develop a systematic study of Dirichlet optimal control problems governed by elliptic equations, which started with the seminal paper \cite{Casas-Raymond2006}. In that work a 2D control-constrained problem governed by a semilinear equation posed on a convex polygonal domain is studied. Several other works have been published about numerical error estimates; see \cite{Deckelnick-Gunther-Hinze2009} {or \cite[Ch 2.1]{Gunther2010} } for a variational approach to control-constrained problems posed on smooth 2D or 3D domains, \cite{MayRannacherVexler2013} for a superconvergence result on the state approximation for unconstrained 2D problems, \cite{Mateos-Neitzel2015} for control-and-state constrained problems posed on convex polygonal domains. In \cite{Casas-Sokolowski2010} the authors study the control-constrained problem in a 2D smooth convex domain taking into account the problems derived by the boundary approximation and in \cite{Casas-Gunther-Mateos2011} an apparent paradox between \cite{Deckelnick-Gunther-Hinze2009} and \cite{Casas-Sokolowski2010} is explained. The regularity of the solution in possibly nonconvex polygonal plane domains is studied in \cite{AMPR2015}; see also the introduction of that paper for further references about related problems. {In the recent publication \cite{Gong-Hinze-Zhou2016}, error estimates for a Dirichlet control problem governed by a parabolic equation are obtained. In this work, the spatial discretization of the control is studied in both the cases of continuous piecewise linear finite elements and variational approach.}

Just before that, several works dealing with efficient optimization methods for control or state constrained problems had appeared; in \cite{Chen-Nashed-Qi2000, Bergounioux-Ito-Kunisch1999, Bergounioux-Kunisch2002, Kunisch-Rosch2002,  Ulbrich2003,Hintermuller-Ito-Kunisch2003, Hintermuller-Kunisch2006} the semismooth Newton method is thoroughly studied. Since all these papers about optimization are previous to the papers about the numerical analysis of Dirichlet control problems, very little or no reference is made in them to their applicability to the problems we are going to deal with. Only in \cite{Hintermuller-Kunisch2006} two different Dirichlet control problems with more regular solutions than the ones we treat here are studied in the infinite-dimensional case. Let us also mention that in \cite{Casas-Mateos-Raymond-2009} the Dirichlet boundary condition is replaced by a Robin penalization. For this kind of penalization the methods developed in the aforementioned references are directly applicable. {In \cite{Hinze-Vierling2012}, the semismooth Newton method is studied in the context of the variational approach of the control. Although the authors only exemplify their results through distributed and Robin boundary control, a combination of their results and some of the results we present in Section \ref{S4} can be applied to Dirichlet control. See Remark \ref{R41} below.}

In this work we describe optimization methods for Dirichlet control problems in both the infinite and finite dimensional cases. Convergence proofs, examples and practical implementation details are discussed for all the algorithms through the paper.

In Section \ref{S1} we state the problem and prove that it is well posed in Lipschitz domains; see Lemma \ref{L2.1}. So far, only smooth, convex, polygonal or polyhedral domains had been studied.

Next, we discretize the problem. \refre{As is usual in control problems, we have three choices to discretize the control: the first option is not to discretize the control, using a variational discretization as introduced in \cite{Hinze2005} for distributed problems; as a second option we may use piecewise constant functions; and finally we can use continuous piecewise linear functions.}

 {The choice is not trivial because first order optimality conditions for Dirichlet control problems involve the normal derivative of the adjoint state.

 If we discretize directly the optimality system using e.g. continuous piecewise linear finite elements, we would obtain two different approximations of the control: the trace of the discrete state would be continuous piecewise linear and the normal derivative of the discrete adjoint state would be piecewise constant.}
{{In \cite[Ch. 3.2.7.3]{Hinze_Pinnau_Ulbrich_Ulbrich2009} and in \cite[Example 2.1.11]{Gunther2010} this difficulty is solved using a mixed formulation of the state equation which is discretized with the lowest order Raviart-Thomas element. In \cite{Gong-Yan2011} a convergence proof for this kind of discretization is given.} \refre{In this way, both the trace of the discrete state and the normal derivative of the discrete adjoint state are piecewise constant functions, so the identification of both with the discrete control is meaningful. The discretization of the control in the presence of control constraints is carried out in these papers using a variational discretization. This kind of approximation may be convenient when the gradient of the state is the variable of interest, since this quantity is treated as one of the variables of the problem.
}}

{Another approach, in finite dimension, is to use the variational discrete normal derivative of the discrete adjoint state introduced in \cite{Casas-Raymond2006},} \refre{which is a continuous piecewise linear function. Doing so, both the trace of the discrete state and the normal derivative of the discrete adjoint state are continuous piecewise linear functions, so the identification of both with the discrete control is meaningful. Following this idea, in \cite{Deckelnick-Gunther-Hinze2009} the control is not discretized for the control-constrained case, but a variational approach is followed.}

\refre{In this work we investigate optimization methods for the case of discretizing the control explicitely using continuous piecewise linear functions.}

{The end of Section \ref{S1} is devoted to} describe with some detail some computational aspects that will be important in the rest of the work.

We next provide in Section \ref{S3} an efficient method to solve the unconstrained problem.  We propose in Subsection \ref{SS32} a preconditioned conjugate gradient (pcg in the rest of the work) method for a reduced problem in the spirit of \cite[Section 5]{Casas-Mateos-Troeltz2005}. We are able to prove convergence of the conjugate gradient even for the case where the Tikhonov regularization parameter $\nu$ vanishes.

In sections \ref{S4}, \ref{S5} and \ref{S6} we study the convergence of the semismooth Newton method for the constrained problems and write practical algorithms for the solution of the finite dimensional approximations.
In Section \ref{S4} we deal with the control-constrained problem. In many of the aforementioned references about the semismooth Newton method for control-constrained problems the authors study convergence of an abstract problem or an infinite dimensional problem. For distributed and Neumann control problems the results are immediately applicable to the finite dimensional approximations because the controls can be discretized using piecewise constant functions, and therefore the variational inequality arising from first order necessary optimality conditions can be written in an element-wise form. {The same idea applies when we are dealing with a variational approach as proposed in \refre{\cite{Deckelnick-Gunther-Hinze2009,Gunther2010,Hinze2005}} or a mixed formulation, as studied in \refre{\cite{Gong-Yan2011,Hinze_Pinnau_Ulbrich_Ulbrich2009,Gunther2010}}.} 
When we use continuous piecewise linear elements,
 the variational inequality cannot be written in a point-wise or elementwise form; see  \eqref{E418d} and Remark \ref{naive}. We include the analysis of Newton methods for both the original problem and the discretized one.

In Section \ref{S5} we study the state-constrained problem using a Moreau-Yosida penalization. Since there are no control constraints, the analysis of the semismooth Newton method for the infinite dimensional problem is applicable to the finite dimensional one, so we do not need to repeat it. We prove that the finite-element approximation of the penalized problems converge to the solution of the penalized problem. This result cannot be deduced straightforward from the ones in the literature since the penalized functional is not of class $C^2$. A continuation strategy as proposed in \cite{Hintermuller-Kunisch2006} is developed. Finally, in Section \ref{S6} we discuss the problem with both control and state constraints.

It is well known that the main difficulty with Dirichlet control problems is the low regularity of the solutions. This regularity and related error estimates are, so far, well established in 2D polygonal domains {\cite{Casas-Raymond2006,Mateos-Neitzel2015,AMPR2015}} and 2D or 3D smooth domains {\cite{Deckelnick-Gunther-Hinze2009}} but there is not, up to our best knowledge, a general study in 3D polyhedral domains. Although the main focus of this work is on optimization methods, we also study the regularity of the solution and error estimates of the approximations in one example case in a polyhedron; see Example \ref{Ex3D}.

\section{Statement of the problem and first properties}
\label{S1}
Let $\Omega\subset\mathbb{R}^d$, $d=2$ or $d=3$, be a bounded domain with  Lipschitz boundary $\Gamma$  and in this domain consider a target state $y_\Omega\in L^2(\Omega)$. Consider also the continuous linear operator $S:L^2(\Gamma)\to L^2(\Omega)$ such that $y=Su$ if and only if $y$ is the solution in the transposition sense (see Definition \ref{D1} below)  of
\begin{equation}\label{E1}-\Delta y = 0\mbox{ in }\Omega,\ y=u\mbox{ on }\Gamma.\end{equation}
Let $\omega\subset\Omega$ be a domain such that $\bar\omega\subset\Omega$ and define two pairs of functions $\alpha,\ \beta\in C(\Gamma)$ and $a,\ b\in C(\bar\omega)$ such that $\alpha(x)<\beta(x)$ for all $x\in\Gamma$ and $a(x)<b(x)$ for all $x\in\bar\omega$. For some fixed regularization parameter $\nu\geq0$, define
\[J(u) =\frac12 \|Su-y_\Omega\|^2_{L^2(\Omega)}+\frac{\nu}{2}\|u\|^2_{L^2(\Gamma)}\]
and consider the sets
\[U_{\alpha,\beta}=\{u\in L^2(\Gamma):\ \alpha(x)\leq u(x)\leq \beta(x)\mbox{ for a.e. }x\in\Gamma\},\]
and
\[K_{a,b}=\{y\in L^{2}(\Omega)\cap C(\bar\omega):\ a(x)\leq y(x)\leq \beta(x)\mbox{ for a.e. }x\in \bar\omega\}.\]

In this work we will study optimization procedures for
the following four Dirichlet control problems:
\[
(P^U)\min_{u\in L^2(\Gamma)} J(u),\ \ \ \
(P^C)\min_{u\in U_{\alpha,\beta}}J(u),\ \ \ \
(P^S)\min_{Su\in K_{a,b}}J(u),\ \ \ \
(P^{CS})\hspace{-3mm}\min_{\begin{array}{c}
\scriptstyle{u\in U_{\alpha,\beta}}\\
\scriptstyle{ Su\in K_{a,b}}\end{array}}J(u),\]
namely the unconstrained, control-constrained, state-constrained and control-and-state constrained problems.
\begin{remark}Almost all the literature related to these problems is written using the state equation \eqref{E1}. There would be no problem in taking into account an equation of the form
\[Ay :=-\sum_{i=1}^d\partial_{i}(a_{i,j}\partial_j y) + a_0 y = F\mbox{ in }\Omega,  \ y = u + G\mbox{ on }\Gamma\]
with regular enough $F$, $G$,  $a_0\geq 0$ and $a_{i,j}=a_{j,i}$  satysfing an uniform ellipticity condition.
\end{remark}

Let us state precisely what we mean by solution in the transposition sense. Consider the space
\[\Phi = \{\phi: \phi\in H^1_0(\Omega)\mbox{ and }\Delta\phi \in L^2(\Omega)\}.\]
This is a Banach space with the graph norm $\|\phi\|_\Phi = \|\phi\|_{H^1(\Omega)}+ \|\Delta\phi\|_{L^2(\Omega)}$.
Further, the functions in this space satisfy $\partial_n \phi\in L^2(\Gamma)$. This is known to be true for smooth domains; convex domains, see \cite[Lemma A.1]{Casas-Mateos-Raymond-2009}; plane polygonal domains, see \cite[Corollary 2.3]{AMPR2015}; or polyhedral domains, see \cite[Theorem 2.6.7]{Grisvard1992} and the usual trace theorem. We have not been able to find a proof of this fact for general Lipschitz domains.  In \cite[Theorem B.2]{Jerison-Kenig1995} the regularity $\phi\in H^{3/2}(\Omega)$ is proved. Nevertheless, as the authors notice in page 165 of this reference, the trace theorem is not valid neither in $H^{3/2}(\Omega)$ nor in $H^{1/2}(\Omega)$, so we cannot deduce immediately that $\partial_n\phi \in L^2(\Gamma)$. The results in \cite{Jerison-Kenig1981N,Jerison-Kenig1995} imply that the usual trace result can be extended to harmonic functions in the limit cases. We show  next how to take advantage of this  to prove that $\partial_n\phi\in L^2(\Gamma)$ in Lipschitz domains. Regarding the analysis of semismooth Newton methods --see Lemma \ref{L4.1} below-- we also prove $L^q(\Gamma)$ regularity for some $q>2$.
\begin{lemma}\label{L2.1}Let $\Omega\subset\mathbb{R}^d$, $d=2$ or $d=3$, be a bounded domain with  Lipschitz boundary $\Gamma$ and consider $\phi\in \Phi$. Then, there exists $q_0>2$ depending on the domain such that, for $2\leq q<q_0$, we have
$\partial_n \phi\in L^q(\Gamma)$ and
\begin{equation}\label{E2.1}\|\partial_n \phi\|_{L^q(\Gamma)}\leq C \|\Delta\phi\|_{L^2(\Omega)}.\end{equation}

If, further, $\Omega$ is smooth or convex or polygonal or polyhedral, then there exists $t>0$ such that $\partial_n \phi\in H^t(\Gamma)$ and
\begin{equation}\label{E2.1a}\|\partial_n \phi\|_{H^t(\Gamma)}\leq C \|\Delta\phi\|_{L^2(\Omega)}.\end{equation}
\end{lemma}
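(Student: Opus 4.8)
The plan is to split the argument into two parts, matching the two displayed estimates, and to treat $\Delta\phi$ as a given datum $f\in L^2(\Omega)$, so that $\phi$ solves $-\Delta\phi = f$ in $\Omega$, $\phi = 0$ on $\Gamma$. First I would establish \eqref{E2.1} for general Lipschitz domains. The key input is the regularity theory for the Dirichlet problem on Lipschitz domains: by Jerison--Kenig (\cite{Jerison-Kenig1995}) we already know $\phi\in H^{3/2}(\Omega)$, but as the text notes the trace theorem fails at that borderline exponent. To recover a genuine $L^q(\Gamma)$ trace of the normal derivative, I would write $\phi = \phi_0 + w$, where $\phi_0$ is a fixed particular solution of $-\Delta\phi_0 = f$ on a smooth bounded superset (e.g. extend $f$ by zero to a ball $B\supset\bar\Omega$ and let $\phi_0$ be the Newtonian potential), so that $\phi_0\in H^2(B)\hookrightarrow C^{0,\gamma}$ with $\nabla\phi_0$ having an $L^q(\Gamma)$ trace for any $q<\infty$ by the standard trace theorem for $H^2$; and $w$ is harmonic in $\Omega$ with boundary data $-\phi_0|_\Gamma \in H^1(\Gamma)$ (indeed in $C^{0,\gamma}(\Gamma)$). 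For the harmonic part one invokes the sharp boundary regularity for harmonic functions on Lipschitz domains: the nontangential maximal function of $\nabla w$ lies in $L^q(\Gamma)$ with $\|N(\nabla w)\|_{L^q(\Gamma)}\le C\|\phi_0|_\Gamma\|_{W^{1,q}(\Gamma)}$ for $q$ in a range $(q_0',q_0)$ determined by the Lipschitz character of $\Omega$ (the Dahlberg--Kenig--Verchota/Jerison--Kenig theory cited as \cite{Jerison-Kenig1981N,Jerison-Kenig1995}), and in particular the normal derivative $\partial_n w$ exists nontangentially a.e.\ and lies in $L^q(\Gamma)$. Adding the two contributions and tracking constants through the elliptic estimate $\|\phi_0\|_{H^2(B)}\le C\|f\|_{L^2(\Omega)}$ and the trace/embedding bounds yields a $q_0>2$ and the estimate \eqref{E2.1}; the exponent $2$ is always admissible because $q_0>2$.

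For the second statement I would argue domain class by domain class, again using the splitting $\phi=\phi_0+w$ with $\phi_0$ contributing a smooth (hence $H^t(\Gamma)$ for any $t$) normal derivative, so that everything reduces to the harmonic function $w$ with data $g:=-\phi_0|_\Gamma$. If $\Omega$ is smooth, classical elliptic regularity gives $w\in H^2(\Omega)$ and $\partial_n w\in H^{1/2}(\Gamma)$, so $t=1/2$ works. If $\Omega$ is convex, the same $H^2(\Omega)$ regularity holds by Grisvard's theory (this is the content of \cite[Lemma A.1]{Casas-Mateos-Raymond-2009}), again giving $t=1/2$. If $\Omega$ is a polygon or polyhedron, one uses the corner/edge decomposition (\cite[Theorem 2.6.7]{Grisvard1992}): $w$ splits as an $H^2$ part plus a finite sum of explicit singular functions $r^{\lambda_i}$ with $\lambda_i\in(1/2,1)$ determined by the opening angles; each singular function has a normal derivative lying in $H^{t}(\Gamma)$ for $t<\lambda_i-1/2$, so taking $t$ smaller than $1/2$ and than $\min_i(\lambda_i-1/2)$ gives the claim, with the estimate following from the boundedness of the decomposition operators in terms of $\|g\|$, hence of $\|f\|_{L^2(\Omega)}$.

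The main obstacle, and the only genuinely delicate point, is the first part: making the passage from the $H^{3/2}$ regularity of \cite{Jerison-Kenig1995} to an honest $L^q(\Gamma)$ bound on $\partial_n\phi$ without a trace theorem at the critical smoothness. This is exactly where one must use that the relevant part of $\phi$ is \emph{harmonic} and invoke the Lipschitz-domain boundary estimates for harmonic functions in terms of nontangential maximal functions (the atomic/$H^p$-type estimates and the square-function bounds of Dahlberg--Jerison--Kenig), rather than any Sobolev trace argument; identifying the correct threshold $q_0>2$ and checking that the constant in \eqref{E2.1} depends only on the Lipschitz character is the technical heart of the proof. Everything else — the reduction via a smooth particular solution $\phi_0$, the elliptic a priori bound $\|\phi_0\|_{H^2}\le C\|f\|_{L^2}$, and the assembly of constants — is routine.
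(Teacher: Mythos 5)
Your proposal is correct and follows essentially the same route as the paper: decompose $\phi$ into the Newtonian potential of $-\Delta\phi$ extended by zero (whose gradient has an $L^q(\Gamma)$ trace via $H^{2}(\Omega)\to H^{1/2}(\Gamma)\hookrightarrow L^{q}(\Gamma)$) plus a harmonic corrector with $W^{1,q}(\Gamma)$ boundary data, controlled through the Jerison--Kenig nontangential maximal function estimate for the gradient. The second assertion is likewise handled as in the paper --- improved Sobolev regularity $\phi\in H^{3/2+t}(\Omega)$ for the listed domain classes followed by the ordinary trace theorem --- which you merely spell out in more detail via the corner/edge singular decompositions.
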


\begin{proof}
Denote $z=-\Delta\phi$, extend $z$ by zero to $\mathbb{R}^d$, consider the Newtonian potential centered at the origin $N(x)$ and define $w = z * N$, the convolution product of $z$ and $N$. Then $w\in H^{2}(\Omega)$ and $\nabla w\in H^{1}(\Omega)^d$, so it is clear that $\mathrm{Tr} (\nabla w)\in H^{1/2}(\Gamma)^d\hookrightarrow L^{q}(\Gamma)^d$ for all $q<\infty$ if $d=2$ and all $q\leq 4 $ if $d=3$, since the dimension of $\Gamma$ is $d-1$. This implies that:

(a) $\partial_n w = \nabla w\cdot n\in L^{q}(\Gamma)$ because the boundary $\Gamma$ is Lipschitz and therefore it has a unit normal vector defined almost everywhere; and there exists $C>0$ such that
\begin{equation}\label{Ezz1}\|\partial_n w\|_{L^q(\Gamma)}\leq C\|\Delta\phi\|_{L^2(\Omega)};\end{equation}

(b) $g = \mathrm{Tr}(w)\in W^{1,q}(\Gamma)$ due precisely to the definition of $W^{1,q}(\Gamma)$, and there exists $C>0$ such that
\begin{equation}\label{Ezz2}
\|g\|_{W^{1,q}(\Gamma)}\leq C \|\Delta\phi\|_{L^2(\Omega)}.
\end{equation}
Define now $v\in H^1(\Omega)$ the unique solution of $-\Delta v = 0$ in $\Omega$, $v = g$ on $\Gamma$.
Using \cite[Theorem 5.6]{Jerison-Kenig1995}, we have that there exists $q_0>2$ such that if $2\leq q <q_0$, then the nontangential maximal function of the gradient of $v$ satisfies $\mathrm{M}(\nabla v)\in L^q(\Gamma)$ and there exists $C>0$ such that
\begin{equation}\label{Ezz3}
\|\mathrm{M}(\nabla v)\|_{L^q(\Gamma)}\leq C\|g\|_{W^{1,q}(\Gamma)}.
\end{equation}
As is pointed out in \cite[p. 438]{Dahlberg-Kenig1987}, this implies that $\nabla v$ has nontangential limit a.e. on $\Gamma$ and we can define the normal derivative of $v$ at a point $s\in\Gamma$  as the nontangential limit as $x\to s$ of $\nabla v(x)\cdot n(s)$. For a precise definition of nontangential limit and nontangential maximal function see, e.g., the introduction of the work \cite{Dahlberg-Kenig1987}. This, together with inequalities \eqref{Ezz3} and \eqref{Ezz2} imply that $\partial_n v \in L^q(\Omega)$ and
\begin{equation}\label{Ezz4}
\|\partial_n v\|_{L^{q}(\Gamma)}\leq C \|\Delta\phi\|_{L^2(\Omega)}.
\end{equation}

So we have that $\phi = v-w$ and we can define in a natural way $\partial_n \phi  = \partial_n v -\partial_n w\in L^q(\Gamma)$. The estimate \eqref{E2.1} follows from \eqref{Ezz1} and \eqref{Ezz4}.

For smooth, convex, polygonal or polyhedral domains, the second result follows from the regularity $\phi\in H^{3/2+t}(\Omega)$ and the usual trace theorem for $\nabla \phi$. See \cite[Lemma A.1]{Casas-Mateos-Raymond-2009} for convex domains,  \cite[Corollary 2.3]{AMPR2015} for plane polygonal domains and \cite[Theorem 2.6.7]{Grisvard1992} for polyhedral domains.
\end{proof}

\begin{definition}\label{D1}We will say that $y\in L^2(\Omega)$ is the solution in the transposition sense of \eqref{E1} if
\begin{equation}\label{E2}(y,-\Delta\phi)_\Omega = -(u,\partial_n \phi)_\Gamma\mbox{ for all }\phi\in\Phi.\end{equation}
\end{definition}
Here and in the rest of the work $(\cdot,\cdot)_X$  stands for the standard inner product in $L^2(X)$.

The adjoint operator of $S$ is $S^*:L^{2}(\Omega)\to L^2(\Gamma)$ defined by $S^*z = -\partial_n\phi$, where $\phi$ is the unique weak solution of
\begin{equation}\label{E3}-\Delta \phi = z\mbox{ in }\Omega,\ \phi = 0\mbox{ on }\Gamma.\end{equation}
We can write now
\begin{eqnarray*}
J(u) &=&\frac12(Su-y_\Omega,Su-y_\Omega)_\Omega+\frac{\nu}2(u,u)_\Gamma =\frac12(S^*Su+\nu u,u)_\Gamma - (S^*y_\Omega,u)_\Gamma + c_\Omega
\end{eqnarray*}
where $c_\Omega=0.5 \|y_\Omega\|_{L^2(\Omega)}^2$ is a constant.

From this expression, we can easily compute the derivative of $J$ at a point $u\in L^2(\Gamma)$ in the direction $v\in L^2(\Gamma)$:
\[J'(u)v=(S^*Su+\nu u,v)_\Gamma -(S^*y_\Omega,v)_\Gamma.\]
For later use, we will define now for every $u\in L^2(\Gamma)$, $y_u=Su\in H^{1/2}(\Gamma)$ and $\varphi_u\in H^1_0( \Omega)$ the unique solution of
\[-\Delta \varphi_u = y_u-y_\Omega\mbox{ in }\Omega,\ \varphi_u=0\mbox{ on }\Gamma.\]

\subsection*{Discretization.}

To discretize the problems. consider  $\{\mathcal{T}_h\}_h$ a regular family of triangulations of $\bar\Omega$. To simplify the notation, we will suppose that $\Gamma$ is polygonal or polyhedral. Related to the mesh, we will call $N$ the number of nodes and $\{x_j\}_{j=1}^N$ the nodes of the mesh. We define the sets of interior indexes, boundary indexes and indexes in $\bar\omega$ as $\indice{I} = \{j:x_j\in\Omega\}$, $\indice{B}=\{j:x_j\in\Gamma\}$ and $\indice{J} = \{j:x_j\in\bar\omega\}$.
For the discretization of the state and the adjoint state we use the space of linear finite elements $Y_{h}\subset H^1(\Omega)$,
\[
Y_{h}=\{y_h\in C(\bar\Omega)\colon y_h\in P^1(T)\ \forall T\in \mathcal{T}_h\}.
\]
As usual, we will abbreviate $Y_{h0}=Y_h\cap H^1_0(\Omega)$. For the control we use the space $U_h$ of continuous piecewise linear functions on $\Gamma$
\[U_h = \{u_h\in C(\Gamma)\colon u_h\in P^1(T\cap\Gamma)\ \forall T\in\mathcal{T}_h\}.
\]
Notice that the elements of $U_h$ are the traces of the
elements of $Y_h$.
We will denote $I_h:C(\bar\Omega)\to Y_h$ or $I_h:C(\Gamma)\to U_h$ the interpolation operator related to these spaces and $\Pi_h:L^2(\Omega)\to Y_h$ or $\Pi_h:L^2(\Gamma)\to U_h$ the projection onto this spaces in the $L^2$ sense. For all $y\in L^2(\Omega)$ and all $u\in L^2(\Gamma)$:
\[(\Pi_h y,y_h)_\Omega = (y,y_h)_\Omega\ \forall y_h\in Y_h\mbox{ and }
(\Pi_h u,u_h)_\Gamma = (u,u_h)_\Gamma\ \forall u_h\in U_h.\]

We discretize the state equation following the work by Berggren \cite{Berggren2004}: define $S_h:L^2(\Gamma)\to L^2(\Omega)$ such that for $u\in L^2(\Gamma)$, $y_h=S_hu$ if and only if $y_h\in Y_h$ is  the unique solution of
\begin{equation}\label{E6}a(y_h, z_h)=0\ \forall z_h\in Y_{h0},\ (y_h,v_h)_{\Gamma} = (u,v_h)_\Gamma\ \forall v_h\in U_h,\end{equation}
where $a(\cdot,\cdot)$ is the bilinear form associated to the operator in the PDE. In the case of the Laplace operator $a(y,z)=\int_\Omega\nabla^T y\nabla z dx$.
The discrete functional is thus defined as
\[J_h(u)=\frac12(S_hu-y_\Omega,S_hu-y_\Omega)_\Omega+\frac{\nu}2(u,u)_\Gamma.\]

We define now
\[U^h_{\alpha,\beta} = \{u_h\in U_h:\ \alpha(x_j)\leq u_h(x_j)\leq \beta(x_j)\ \forall j\in\indice{B}\},\]
 and
\[K^h_{a,b} = \{y_h\in Y_h:\ a(x_j)\leq y_h(x_j)\leq b(x_j)\ \forall j\in \indice{J}\}.\]
The discrete problems reads thus as
\[
(P_h^U)\min_{u_h\in U_h} J_h(u_h),\ \ \ \
(P_h^C)\min_{u\in U^h_{\alpha,\beta}}J_h(u_h),\ \ \ \
(P_h^S)\min_{S_hu_h\in K^h_{a,b}}J_h(u_h),\ \ \ \
(P_h^{CS})\hspace{-3mm}\min_{\begin{array}{c}
\scriptstyle{u_h\in U^h_{\alpha,\beta}}\\
\scriptstyle{ S_hu_h\in K^h_{a,b}}\end{array}}J_h(u_h).\]

The adjoint operator of $S_h$ is given by the discrete variational normal derivative. See \cite{Casas-Raymond2006}. $S_h^*:L^2(\Omega)\to L^2(\Gamma)$ and $w_h=S_h^*y$ if $w_h\in U_h$ satisfies
\begin{equation}(w_h,z_h)_\Gamma =  (y,z_h)_\Omega - a(z_h,\phi_h)\mbox{ for all }z_h\in Y_h,\label{E50}\end{equation}
where $\phi_h\in Y_{h0}$ is the unique solution of
\begin{equation}a(z_h, \phi_h) = (y,z_h)_\Omega\mbox{ for all } z_h\in Y_{h0}.\label{E100}\end{equation}
It is customary to write $S_h^*y=-\partial_n^h\phi_h$. For $u\in L^2(\Gamma)$ and $y\in L^2(\Omega)$, we have
\[(S_hu,y)_\Omega = (u,S^*_h y)_\Gamma.\]
We can then write
\[J_h(u) =\frac12(S^*_hS_hu+\nu u,u)_\Gamma -(S_h^* y_\Omega,u)_\Gamma+c_\Omega.\]
The computation of the derivative of $J_h$ at a point $u\in L^2(\Gamma)$ in the direction $v\in L^2(\Gamma)$ is then obtained with
\[J'_h(u)v =(S_h^*S_hu+\nu u,v)_\Gamma - (S^*_hy_\Omega,v)_\Gamma.\]
Since our final objective is to optimize in $U_h$, let us see in more detail how to make the computations in this space.

Consider $\{e_j\}_1^{N}$ the nodal basis in $Y_h$, where $N$ is the dimension of $Y_h$ and satisfies $N=N_\indice{I}+N_\indice{B}$, the latter being respectively the number of interior and boundary nodes. With an abuse of notation, we will also denote $e_j$ the restriction of $e_j$ to $\Gamma$. Define the usual finite element stress, mass and boundary mass matrices by
\[\matriz{K}_{i,j}=a( e_i,e_j),\  \matriz{M}_{i,j}=(e_i,e_j)_\Omega,\ \matriz{B}_{i,j} =(e_i,e_j)_\Gamma \mbox{ for }1\leq i,j\leq N.\]
We will also use $\matriz{I}\in\mathbb{R}^{N\times N}$ for the identity matrix, $\matriz{O}\in\mathbb{R}^{N\times N}$ for the zero matrix and usually refer to submatrices as $\matriz{K}_{\indice{I},\indice{I}}$ or $\matriz{K}_{\indice{I},:}$ defined by taking the rows or columns designed by the sets of indexes in the subscripts, the semicolon meaning ``all the indexes''. For instace, the usual boundary mass matrix is $\matriz{B}_{\indice{B},\indice{B}}$.

Given $u_h=\sum_{j\in\indice{B}}u_j e_j$, we denote $\bm{u} \in\mathbb{R}^{N_\indice{B}\times 1}$ the vector $(u_1,\dots,u_{N_\indice{B}})^T$ and for $y_h=\sum_{j=1}^N y_je_j$ we denote $\bm{y} \in\mathbb{R}^{N\times 1}$ the vector $(y_1,\dots,y_N)^T$. Using \eqref{E6}, we have that $y_h=S_hu_h$ iff
\begin{equation}\label{E250}
\left[\begin{array}{cc}\matriz{K}_{\indice{I},\indice{I}} & \matriz{K}_{\indice{I},\indice{B}}\\
\matriz{O}_{\indice{B},\indice{I}} & \matriz{B}_{\indice{B},\indice{B}}\end{array}\right]
\left[
\begin{array}{c}\bm{y}_{\indice{I}}\\ \bm{y}_{\indice{B}}
\end{array}
\right]
=
\left[
\begin{array}{c} \bm{0}\\ \matriz{B}_{\indice{B},\indice{B}} \bm{u}
\end{array}\right].
\end{equation}
Since $\matriz{B}_{\indice{B},\indice{B}}$ is nonsingular, we can write this as
\begin{equation}\label{E200}
\begin{array}{rcr}\matriz{K}_{\indice{I},\indice{I}} \bm{y}_{\indice{I}}&=&
-\matriz{K}_{\indice{I},\indice{B}} \bm{u},\\
 \bm{y}_{\indice{B}}&=&
\bm{u}.
\end{array}
\end{equation}
If we define $\matriz{S}\in\mathbb{R}^{N_{\indice{B}}\times N}$ as
\[\matriz{S}=\left[\begin{array}{cc}\matriz{K}_{\indice{I},\indice{I}} & \matriz{K}_{\indice{I},\indice{B}}\\
\matriz{O}_{\indice{B},\indice{I}} & \matriz{I}_{\indice{B},\indice{B}}\end{array}\right]^{-1}
\matriz{I}_{:,\indice{B}}
\]
we have that $y_h=S_hu_h$ if and only if $\bm{y}=\matriz{S}\bm{u}$.

Given $y_h\in Y_h$, let $\phi_h=\sum_{j\in \indice{I}}\phi_j e_j$ be the solution of \eqref{E100} for $y=y_h$. Denoting $\bm{\phi}\in\mathbb{R}^{N_\indice{I}\times 1}$ the corresponding vector, it can be computed as the solution of
\begin{equation}\label{E300}
\matriz{K}_{\indice{I},\indice{I}}\bm{\phi} = \matriz{M}_{\indice{I},:}\bm{y}.
\end{equation}
Then we could compute $\bm{w}\in \mathbb{R}^{N_\indice{B}\times 1}$, the vector whose components are the coefficients of the (minus) discrete normal derivative  $-\partial_n^h\phi_h = S_h^*y_h=w_h=\sum_{j\in\indice{B}}w_je_j$ solving the system
\begin{equation}\label{E400}
\matriz{B}_{\indice{B},\indice{B}}\bm{w} = \matriz{M}_{\indice{B},:}\bm{y} -
\matriz{K}_{\indice{B},\indice{I}}\bm\phi.
\end{equation}
Formally, we can also write that
$\bm{w} = \matriz{B}_{\indice{B},\indice{B}}^{-1} \matriz{S}^T\matriz{M}\bm{y}.$
To finish this section we also define the matrix $\matriz{A}\in \mathbb{R}^{N_{\indice{B}},\times  N_{\indice{B}}}$ and the vector $\bm{f}\in \mathbb{R}^{N_{\indice{B}}\times 1}$ by
\[\matriz{A}_{i,j}= (S_h^*S_h e_i+\nu e_i,e_j)_\Gamma\mbox{ and } f_i= (S^*_hy_\Omega,e_i)_\Gamma.\]
We have that
\begin{equation}\label{E9}J_h(u_h)=\frac12 \bm{u}^T\matriz{A}\bm{u} - \bm{f}^T\bm{u}+c_\Omega.\end{equation}
We also note that
\begin{equation}\label{E2end}\matriz{A} = \matriz{S}^T\matriz{M}\matriz{S}+\nu\matriz{B}_{\indice{B}\indice{B}}.\end{equation}
To compute the vector $\bm{f}$,  we consider the projection of $y_\Omega$ onto $Y_h$ in the $L^2(\Omega)$ sense, $y_{\Omega,h} = \Pi_h y_\Omega$ and denote $\bm{y}_\Omega\in \mathbb{R}^{N\times 1}$, the vector whose $j$-th component is $y_{\Omega,h}(x_j)$, and \begin{equation}\label{E2.9}\bm{f}=\matriz{S}^T\matriz{M}\bm{y}_\Omega.\end{equation}

So for $u_h,v_h\in U_h$, the latter represented by the vector $\bm{v}$, we can compute
\[J'(u_h)v_h=\bm{v}^T\matriz{B}_{\indice{B},\indice{B}} \bm{w} +\nu \bm{v}^T \matriz{B}_{\indice{B},\indice{B}}\bm{u}-\bm{v}^T\bm{f}.\]
Notice that applying the equality \eqref{E400}, the explicit computation of $\bm{w}$ is not necessary, and we can write
\[J'(u_h)v_h=\bm{v}^T(\matriz{M}_{\indice{B},:}\bm{y}-
\matriz{K}_{\indice{B},\indice{I}}\bm\phi +\nu  \matriz{B}_{\indice{B},\indice{B}}\bm{u}-\bm{f}). \]

\section{Unconstrained problem}
\label{S3}
Problem $(P^U)$ has a unique solution $\bar u\in L^2(\Gamma)$ that satisfies $J'(\bar u)=0$.
For every $0<h<h_0$, problem $(P^U_h)$ has also a unique solution $\bar u_h$ that satisfies $J'_h(\bar u_h)=0$. The problems being convex, these conditions are also sufficient. Moreover it is known that $\bar u_h\to\bar u$ strongly in $L^2(\Gamma)$ and also error estimates are available in terms of the mesh size $h$ if $\nu>0$ and the domain is either {2D and polygonal or smooth}. See \cite{Casas-Raymond2006,Deckelnick-Gunther-Hinze2009,Casas-Sokolowski2010,MayRannacherVexler2013,Mateos-Neitzel2015,AMPR2017}.

Let us comment two different approaches for the solution of the FEM approximation of $(P^U)$.

\subsection{Solve the optimality system for the state, the adjoint state and the control.}
We write first order optimality conditions for $(P^U_h)$. There exists some $h_0>0$ such that for every $0<h<h_0$, there exist unique $\bar u_h\in U_h$, $\bar y_h\in Y_h$ and $\bar\varphi_h\in Y_{h0}$ satisfying the optimality system:
\begin{subequations}
\begin{eqnarray}
a(\bar y_h,z_h)&=&0 \mbox{ for all }z_h\in Y_{h0},\\ \bar y_h&\equiv& \bar u_h\mbox{ on }\Gamma,\\
a( z_h, \bar\varphi_h) & =&(\bar y_h-y_\Omega,z_h)_\Omega \mbox{ for all }z_h\in Y_{h0},\\
\nu\bar u_h & \equiv&\partial_n^h\bar\varphi_h\mbox{ on }\Gamma.
\end{eqnarray}
\end{subequations}
Taking into account the definition of discrete variational normal derivative and relations \eqref{E200}--\eqref{E400}, we can write this optimality system as
\[\left[\begin{array}{cccc}
\matriz{K}_{\indice{I},\indice{I}} &\matriz{O}_{\indice{I},\indice{B}} & \matriz{K}_{\indice{I},\indice{B}} & \matriz{O}_{\indice{I},\indice{I}} \\
\matriz{O}_{\indice{B},\indice{I}} & \matriz{I}_{\indice{B},\indice{B}} & -\matriz{I}_{\indice{B},\indice{B}} & \matriz{O}_{\indice{B},\indice{I}}\\
-\matriz{M}_{\indice{I},\indice{I}} & -\matriz{M}_{\indice{I},\indice{B}} & \matriz{O}_{\indice{I},\indice{B}} & \matriz{K}_{\indice{I},\indice{I}}\\
\matriz{M}_{\indice{B},\indice{I}} & \matriz{M}_{\indice{B},\indice{B}} & {\nu}\matriz{B}_{\indice{B},\indice{B}}&
-\matriz{K}_{\indice{B},\indice{I}}
\end{array}\right]
\left[\begin{array}{c}
\bm{y}_\indice{I} \\\bm{y}_\indice{B} \\\bm{u} \\ \bm{\varphi}_\indice{I}
\end{array}
\right] =
\left[\begin{array}{c}
\bm{0}_\indice{I} \\
\bm{0}_\indice{B} \\
-\matriz{M}_{\indice{I},:}\bm{y}_\Omega\\
\matriz{M}_{\indice{B},:}\bm{y}_\Omega
\end{array}
\right].
\]
We may eliminate $\bm{u}$ with the boundary condition, and reorder the equations  to get the linear system
\begin{equation}\left[\begin{array}{cc}
\matriz{M}+\nu\matriz{B} & -\matriz{K}_{:,\indice{I}}  \\
-\matriz{K}_{\indice{I},:} & \matriz{O}_{\indice{I},\indice{I}}
\end{array}\right]
\left[\begin{array}{c}
\bm{y} \\
\bm{\varphi}_\indice{I} \end{array}
\right] =
\left[\begin{array}{c}
\matriz{M}\bm{y}_\Omega\\
\bm{0}
\end{array}
\right].
\label{E500}
\end{equation}
Notice that system \eqref{E500} is solvable even for $\nu = 0$.
Solving this equation would completely solve the problem for the unconstrained case.
When the discretization is very fine, the number of unknowns may make the solution of the system by direct methods too difficult.
The preconditioned conjugate gradient method for this kind of linear systems is studied by Sch\"oberl and Zulehner in \cite{Schoberl-Zulehner2007} and  Herzog and Sachs in \cite{Herzog-Sachs2010}. A preconditioner can be built using matrices representing scalars products in $Y_h$ and $Y_{h0}$. Following Algorithm 1 in the last-mentioned reference, at each iteration three linear systems must be solved: two of size $N$ and one of size $N_{\indice{I}}$. In \cite[Algorithm 3]{Herzog-Sachs2010}, the systems are solved using a multigrid method.

Reduced problems in the adjoint state variable and related pcg have also been studied in \cite{Schiela-Ulbrich2014}. Nevertheless, the structure of the matrix we have in \eqref{E500} is different to the one treated in that reference and application of their results is not straightforward.

\subsection{Use an iterative method to solve a reduced problem in the control variable.}\label{SS32}
Let us see another way of solving $(P^U_h)$. Using \eqref{E9}, first order optimality conditions can also be written as
\begin{equation}\matriz{A}\bm{u} = \bm{f}.\label{E600}\end{equation}
\begin{lemma}The matrix $\matriz{A}$ is symmetric and positive definite for all $\nu\geq 0$ and there exists $C>0$ independent of $h$ and $\nu$ such that its condition number is bounded by
\begin{equation}\label{E15a}\kappa( \matriz{A} ) \leq \frac{
C {
\frac{\lambda_{N_\indice{B}}(\matriz{B}_{\indice{B},\indice{B}})}
{\lambda_1(\matriz{M})}
}
\lambda_N(\matriz{M}) +\nu \lambda_{N_\indice{B}}(\matriz{B}_{\indice{B},\indice{B}})}
{\lambda_1(\matriz{M}) +\nu \lambda_{1}(\matriz{B}_{\indice{B},\indice{B}})},
\end{equation}
where $0<\lambda_1(\matriz{B}_{\indice{B},\indice{B}})< \lambda_{N_\indice{B}}(\matriz{B}_{\indice{B},\indice{B}})$ and $0<\lambda_1(\matriz{M})<\lambda_N(\matriz{M})$ are respectively the smallest and greatest eigenvalues of the matrices $\matriz{B}_{\indice{B},\indice{B}}$ and $\matriz{M}$.
\end{lemma}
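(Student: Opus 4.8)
\emph{Proof proposal.} The plan is to read off the quadratic form of $\matriz{A}$ from \eqref{E2end}. Given $\bm{u}\in\mathbb{R}^{N_\indice{B}}$, write $u_h=\sum_{j\in\indice{B}}u_je_j\in U_h$ and let $\bm{y}=\matriz{S}\bm{u}$ be the coefficient vector of $y_h=S_hu_h$. Since $\bm{u}^T\matriz{S}^T\matriz{M}\matriz{S}\bm{u}=\bm{y}^T\matriz{M}\bm{y}=\|y_h\|_{L^2(\Omega)}^2$ and $\bm{u}^T\matriz{B}_{\indice{B},\indice{B}}\bm{u}=\|u_h\|_{L^2(\Gamma)}^2$, one gets
\[\bm{u}^T\matriz{A}\bm{u}=\|S_hu_h\|_{L^2(\Omega)}^2+\nu\|u_h\|_{L^2(\Gamma)}^2 .\]
Symmetry of $\matriz{A}$ is immediate from \eqref{E2end} because $\matriz{M}$ and $\matriz{B}_{\indice{B},\indice{B}}$ are symmetric. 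For positive definiteness, the right-hand side above is nonnegative, and if it vanishes then $\|S_hu_h\|_{L^2(\Omega)}=0$, so $y_h\equiv0$ on $\bar\Omega$; by \eqref{E200} the boundary block of $\bm{y}$ equals $\bm{u}$, hence $\bm{u}=\bm{0}$. (For $\nu>0$ this is even more direct since $\matriz{B}_{\indice{B},\indice{B}}$ is already positive definite.) Thus $\matriz{A}$ is SPD for every $\nu\ge0$ and $\kappa(\matriz{A})=\lambda_{\max}(\matriz{A})/\lambda_{\min}(\matriz{A})$.

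For the lower bound on $\lambda_{\min}(\matriz{A})$ I would combine $\bm{y}^T\matriz{M}\bm{y}\ge\lambda_1(\matriz{M})\|\bm{y}\|^2$ with the observation, again from \eqref{E200}, that $\|\bm{y}\|^2\ge\|\bm{y}_\indice{B}\|^2=\|\bm{u}\|^2$, and with $\bm{u}^T\matriz{B}_{\indice{B},\indice{B}}\bm{u}\ge\lambda_1(\matriz{B}_{\indice{B},\indice{B}})\|\bm{u}\|^2$, to obtain $\bm{u}^T\matriz{A}\bm{u}\ge(\lambda_1(\matriz{M})+\nu\lambda_1(\matriz{B}_{\indice{B},\indice{B}}))\|\bm{u}\|^2$, which is the denominator of \eqref{E15a}. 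Here and below $\|\cdot\|$ denotes the Euclidean norm.

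For the upper bound on $\lambda_{\max}(\matriz{A})$ the essential ingredient, and the main obstacle, is an $h$-independent stability estimate for the discrete very-weak solution operator, $\|S_hu_h\|_{L^2(\Omega)}\le C\|u_h\|_{L^2(\Gamma)}$ for all $u_h\in U_h$ (the discrete analogue of the continuity of $S:L^2(\Gamma)\to L^2(\Omega)$; it is automatically $\nu$-independent since $S_h$ does not involve $\nu$). I would prove it by duality: for $z\in L^2(\Omega)$, using $S_hu_h\in Y_h$ together with $(\Pi_hz,y_h)_\Omega=(z,y_h)_\Omega$ and $(S_hu_h,y)_\Omega=(u_h,S_h^*y)_\Gamma$,
\[(S_hu_h,z)_\Omega=(S_hu_h,\Pi_hz)_\Omega=(u_h,S_h^*\Pi_hz)_\Gamma\le\|u_h\|_{L^2(\Gamma)}\,\|S_h^*\Pi_hz\|_{L^2(\Gamma)},\]
so the claim reduces to the $h$-uniform bound $\|S_h^*z_h\|_{L^2(\Gamma)}\le C\|z_h\|_{L^2(\Omega)}$ for the discrete variational normal derivative (plus the $L^2$-stability $\|\Pi_hz\|_{L^2(\Omega)}\le\|z\|_{L^2(\Omega)}$). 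That bound rests on the $L^2(\Gamma)$-regularity of $\partial_n$ on harmonic functions, i.e. Lemma \ref{L2.1} with $q=2$, together with the standard finite element estimates for $\partial_n^h\phi_h$ as in \cite{Berggren2004,Casas-Raymond2006}.

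Granting this, I would bound $\|\bm{y}\|^2\le\lambda_1(\matriz{M})^{-1}\|S_hu_h\|_{L^2(\Omega)}^2\le C^2\lambda_1(\matriz{M})^{-1}\|u_h\|_{L^2(\Gamma)}^2\le C^2\lambda_1(\matriz{M})^{-1}\lambda_{N_\indice{B}}(\matriz{B}_{\indice{B},\indice{B}})\|\bm{u}\|^2$, and hence
\[\bm{u}^T\matriz{A}\bm{u}\le\lambda_N(\matriz{M})\|\bm{y}\|^2+\nu\lambda_{N_\indice{B}}(\matriz{B}_{\indice{B},\indice{B}})\|\bm{u}\|^2\le\left(C^2\frac{\lambda_{N_\indice{B}}(\matriz{B}_{\indice{B},\indice{B}})}{\lambda_1(\matriz{M})}\lambda_N(\matriz{M})+\nu\lambda_{N_\indice{B}}(\matriz{B}_{\indice{B},\indice{B}})\right)\|\bm{u}\|^2 .\]
Dividing this bound on $\lambda_{\max}(\matriz{A})$ by the lower bound on $\lambda_{\min}(\matriz{A})$ gives exactly \eqref{E15a}, with the constant $C$ there corresponding to $C^2$ above. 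Apart from the uniform stability of $S_h$/$S_h^*$, everything is elementary linear algebra via the Rayleigh-quotient bounds for $\matriz{M}$ and $\matriz{B}_{\indice{B},\indice{B}}$; carefully checking that the stability constant is genuinely independent of $h$ is where the real work lies.
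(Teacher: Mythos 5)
Your proof is correct and follows essentially the same route as the paper: symmetry and the lower bound $\bm{u}^T\matriz{A}\bm{u}\ge(\lambda_1(\matriz{M})+\nu\lambda_1(\matriz{B}_{\indice{B},\indice{B}}))\|\bm{u}\|^2$ come from \eqref{E2end} together with the observation that the boundary block of $\matriz{S}\bm{u}$ is $\bm{u}$ (hence $\|\matriz{S}\bm{u}\|\ge\|\bm{u}\|$), and the upper bound comes from the $h$-uniform stability $\|S_hu_h\|_{L^2(\Omega)}\le\sqrt{C}\,\|u_h\|_{L^2(\Gamma)}$ combined with exactly the same Rayleigh-quotient estimates for $\matriz{M}$ and $\matriz{B}_{\indice{B},\indice{B}}$. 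The only divergence is that the paper simply cites \cite[Equation (3.8)]{MayRannacherVexler2013} for that stability bound, whereas you sketch a duality reduction of it to the uniform $L^2(\Gamma)$-boundedness of the discrete normal derivative $S_h^*$ (itself deferred to Lemma \ref{L2.1} and standard estimates); this is a legitimate alternative justification of the same key ingredient rather than a different proof.
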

\begin{proof}It is clear from \eqref{E2end} that $\matriz{A}$ is symmetric. The mass matrices $\matriz{M}$ and $\matriz{B}_{\indice{B},\indice{B}}$ are symmetric and positive definite and therefore $\lambda_{1}(\matriz{M})>0$ and $\lambda_{1}(\matriz{B}_{\indice{B},\indice{B}})>0$.  Since the boundary components of $S\bm{u}$ are exactly $\bm{u}$, we have that $\|\matriz{S}\bm{u}\|_{\mathbb{R}^N} \geq \|\bm{u}\|_{\mathbb{R}^{N_\indice{B}}}$ and hence
\begin{equation}\label{E35}
\bm{u}^T\matriz{A}\bm{u} = \bm{u}^T(\matriz{S}^T\matriz{M}\matriz{S}+\nu \matriz{B}_{\indice{B},\indice{B}}) \bm{u} =
(\matriz{S}\bm{u})^T\matriz{M}(\matriz{S}\bm{u}) +\nu\bm{u}^T \matriz{B}_{\indice{B},\indice{B}}\bm{u})\geq (\lambda_1(\matriz{M})+ \nu \lambda_1(\matriz{B}_{\indice{B},\indice{B}})) \|\bm{u}\|_{\mathbb{R}^{N_\indice{B}}}^2
\end{equation}
so $\matriz{A}$ is positive definite.

From \cite[Equation (3.8)]{MayRannacherVexler2013}, we know that there exists some $C>0$ such that $\|S_h u_h\|_{L^2(\Omega)}\leq \sqrt{C} \|u_h\|_{L^2(\Gamma)}$. Since
\[
\|S_hu_h\|_{L^2(\Omega)}^2 = (\matriz{S}\bm{u})^T\matriz{M}(\matriz{S}\bm{u})\geq \lambda_1(\matriz{M})\|\matriz{S}\bm{u}\|_{\mathbb{R}^N}^2\]
and
\[\|u_h\|_{L^2(\Gamma)}^2 = \bm{u} \matriz{B}_{\indice{B},\indice{B}}\bm{u}\leq \lambda_{N_\indice{B}}(\matriz{B}_{\indice{B},\indice{B}})\|\bm{u}\|_{\mathbb{R}^{N_\indice{B}}}^2,\]
we obtain
\[
\bm{u}^T\matriz{A}\bm{u} =
(\matriz{S}\bm{u})^T\matriz{M}(\matriz{S}\bm{u}) +\nu\bm{u}^T \matriz{B}_{\indice{B},\indice{B}}\bm{u})
\leq (C {
\frac{\lambda_{N_\indice{B}}(\matriz{B}_{\indice{B},\indice{B}})}
{\lambda_1(\matriz{M})}
}
\lambda_N(\matriz{M})+ \nu \lambda_{N_\indice{B}}(\matriz{B}_{\indice{B},\indice{B}})) \|\bm{u}\|_{\mathbb{R}^{N_\indice{B}}}^2.
\]
The bound for the condition number follows directly from the last inequality and \eqref{E35}.
\end{proof}
For a graded family of mesh with grading parameter $0<\mu\leq 1$ (see \cite{ASW1996}), we usually define $h=N^{-1/d}$, $N$ being the number of nodes of the mesh and $d$ the dimension of $\Omega$. The case $\mu=1$ corresponds to that of a quasi-uniform mesh family.
\begin{corollary}\label{C3.2}If $\mathcal{T}_h$ is a family of graded meshes with grading parameter $0<\mu\leq 1$, then there exists $C>0$ independent of $h$ and $\nu$ such that
\begin{equation}\label{E17}
\kappa(\matriz{A})\leq C h^{(2d-1)\left(1-\frac{1}{\mu}\right)}
\frac{1 + \nu h^{\left(1-\frac{1}{\mu}\right)d}}{h^{\frac{1}{\mu}}+\nu}.
\end{equation}
In particular, for a quasi-uniform family, there exists $C>0$ such that
\begin{equation}\label{E15}
\kappa(\matriz{A})\leq C\frac{1+\nu}{\nu+h}.\end{equation}
\end{corollary}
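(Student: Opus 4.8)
The plan is to substitute into \eqref{E15a} sharp two-sided bounds for the extreme eigenvalues of $\matriz{M}$ and $\matriz{B}_{\indice{B},\indice{B}}$ on a graded mesh, and then reduce the statement to bookkeeping with powers of $h$.

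First I would record the eigenvalue estimates. For a graded family with grading parameter $\mu$ one has $h_T\sim h$ for the elements far from the refinement (singular) set and $h_T\sim h^{1/\mu}$ for the smallest elements, those touching it; see \cite{ASW1996}. Hence $\min_T h_T\sim h^{1/\mu}$ and $\max_T h_T\sim h$. Combining this with the standard fact that the assembled linear finite element mass matrix is spectrally equivalent to the diagonal matrix of nodal patch volumes (the element mass matrix on $T$ is equivalent to $|T|\,\matriz{I}$, and the number of elements meeting a node is uniformly bounded by shape regularity), I would obtain, with constants depending only on the regularity of the family, $\lambda_1(\matriz{M})\sim h^{d/\mu}$, $\lambda_N(\matriz{M})\sim h^{d}$, $\lambda_1(\matriz{B}_{\indice{B},\indice{B}})\sim h^{(d-1)/\mu}$ and $\lambda_{N_\indice{B}}(\matriz{B}_{\indice{B},\indice{B}})\sim h^{d-1}$, where for $\Gamma$ I take the worst case in which the refinement reaches the boundary; if it does not, the boundary mesh is quasi-uniform and these bounds only improve.

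Next I would plug these estimates into \eqref{E15a}. Since the constant in \eqref{E15a} comes from \cite[Eq.~(3.8)]{MayRannacherVexler2013} and does not depend on $h$ or $\nu$, this yields
\[
\kappa(\matriz{A})\le C\,\frac{h^{2d-1-d/\mu}+\nu\,h^{d-1}}{h^{d/\mu}+\nu\,h^{(d-1)/\mu}}.
\]
Multiplying numerator and denominator by $h^{-(d-1)/\mu}$ turns the denominator into $h^{1/\mu}+\nu$; and since $2d-1-d/\mu-(d-1)/\mu=(2d-1)(1-1/\mu)$ and $d-1-(d-1)/\mu=(d-1)(1-1/\mu)$, the numerator becomes $C h^{(2d-1)(1-1/\mu)}+\nu h^{(d-1)(1-1/\mu)}$. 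Factoring $h^{(2d-1)(1-1/\mu)}$ out of the numerator leaves $C+\nu h^{-d(1-1/\mu)}$, which after relabeling the constant I bound by $C(1+\nu h^{-d(1-1/\mu)})$. Since $0<h<h_0\le 1$ and $0<\mu\le 1$ force $h^{-d(1-1/\mu)}=h^{d(1/\mu-1)}\le 1\le h^{d(1-1/\mu)}$, I may replace $h^{-d(1-1/\mu)}$ by $h^{(1-1/\mu)d}$ at the cost of enlarging the right-hand side, which is exactly \eqref{E17}. Taking $\mu=1$, so that $1-1/\mu=0$ and $1/\mu=1$, immediately gives \eqref{E15}.

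I expect the only non-routine step to be the first one: one must justify $\min_T h_T\sim h^{1/\mu}$ and $\max_T h_T\sim h$ for the graded family of \cite{ASW1996} and then invoke the spectral equivalence of $\matriz{M}$ with $\mathrm{diag}(h_{x_j}^{d})$ and of $\matriz{B}_{\indice{B},\indice{B}}$ with $\mathrm{diag}(h_{x_j}^{d-1})$. Once those are in hand, everything else is exponent arithmetic; the one point that needs attention there is the trade $h^{-d(1-1/\mu)}\le h^{(1-1/\mu)d}$ used to bring the bound into the stated form. I would also be particularly careful with $\lambda_1(\matriz{B}_{\indice{B},\indice{B}})$, since it governs the denominator and hence the joint dependence on $h$ and $\nu$.
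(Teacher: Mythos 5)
Your proof is correct and takes essentially the same route as the paper: the paper likewise substitutes the graded-mesh eigenvalue bounds $\lambda_1(\matriz{M})\geq C_1 h^{d/\mu}$, $\lambda_N(\matriz{M})\leq C_2 h^d$, $\lambda_1(\matriz{B}_{\indice{B},\indice{B}})\geq C_1 h^{(d-1)/\mu}$ and $\lambda_{N_\indice{B}}(\matriz{B}_{\indice{B},\indice{B}})\leq C_2 h^{d-1}$ into \eqref{E15a}, citing \cite{Kamenski-Huang-Xu2014} and \cite{ASW1996} for these estimates rather than rederiving them via spectral equivalence with the diagonal patch-volume matrix as you do. Your exponent bookkeeping is also correct, including the observation that the direct substitution in fact yields the sharper numerator factor $h^{-d(1-1/\mu)}\leq 1$, so that \eqref{E17} as stated follows a fortiori.
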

\begin{proof}
 from \cite{Kamenski-Huang-Xu2014} and \cite{ASW1996}, we have that there exists constants $0<C_1<C_2 $
\[\lambda_{N_\indice{B}}(\matriz{B}_{\indice{B},\indice{B}})\leq C_2 h^{d-1},\
\lambda_{1}(\matriz{B}_{\indice{B},\indice{B}})\geq C_1 h^{\frac{d-1}{\mu}},\
\lambda_N(\matriz{M})\leq C_2 h^d,\
 \lambda_1(\matriz{M}) \geq C_1 h^{\frac{d}{\mu}}\]
Estimate \eqref{E17} follows then from \eqref{E15a}. Estimate \eqref{E15} follows from \eqref{E17} for $\mu=1$.
\end{proof}

The explicit computation of the matrix $\matriz{A}$ is out of the question, since it requires the solution of $2N_{\indice{B}}$ systems of size $N_{\indice{I}}$. Much better performance can be achieved using an iterative method which only requires the computation of $\bm{d}=\matriz{A}\bm{u}$. This is shown in Algorithm \ref{A100}.

\LinesNumbered
\begin{algorithm}[H]
\caption{Function $\bm{d}=\matriz{A}\bm{u}$\label{A100}}
\DontPrintSemicolon
solve \eqref{E200} for $\bm{y}$\;
solve \eqref{E300} for $\bm{\phi}$\;
set $\bm{d} = \matriz{M}_{\indice{B},:}\bm{y}-
\matriz{K}_{\indice{B},\indice{I}}\bm\phi+\nu\matriz{B}_{\indice{B},\indice{B}}\bm{u}$\;
\end{algorithm}

Preconditioned conjugate gradient methods can be carried out at the cost of one evaluation of $\matriz{A}\bm{u}$ per iteration. The computation of $\bm{f}$ can be done in a similar way, but of course it only must be computed once; see Algorithm \ref{A200}.

\begin{algorithm}[H]
\caption{Computation of $\bm{f}$\label{A200}}
\DontPrintSemicolon
define $\bm{y}_\Omega$ by $y_j=y_{\Omega,h}(x_j)$ for $j=1:N$\;
set $\bm{y}=\bm{y}_\Omega$ in \eqref{E300} and solve for $\bm{\phi}$\;
set $\bm{f} = \matriz{M}_{\indice{B},:}\bm{y}_\Omega-
\matriz{K}_{\indice{B},\indice{I}}\bm\phi$\;
\end{algorithm}

To finish this section, let us say a word about a preconditioner to solve \eqref{E600}. In practice, matrices like $\matriz{B}_{\indice{B},\indice{B}}$ or  $\matriz{P}=\matriz{M}_{\indice{B},\indice{B}}+\nu\matriz{B}_{\indice{B},\indice{B}}$ make acceptable preconditioners. This is specially important when using graded meshes; see Example \ref{Ex33} below.
Notice that at each pcg iterate, we only need to solve two linear systems, each of size $N_{\indice{I}}$, to compute $\matriz{A}\bm{u}$, plus another one of size $N_{\indice{B}}$  to compute $\matriz{P}^{-1}\bm{r}$.

\begin{example}\label{Ex31}
All the code for all the examples has been made by the author using {\sc Matlab} R2015b and has been run on a desktop PC with Windows 10 Pro 64bits with 16GB RAM DIMM 1333Mhz on an Intel Core i7 CP 870@2.93Ghz.

In this example we compare the different execution times, $t_D$ and $t_P$, that we  obtain when we solve the optimality system using a direct solver for equation \eqref{E500} or using an iterative method, the preconditioned conjugate gradient method in our case, to solve the reduced problem \eqref{E600}.
We have used respectively \textsc{Matlab} builtin commands \texttt{mldivide} and \texttt{pcg}.

We use the example in \cite{Mateos-Neitzel2015}, where $\Omega$ the interior of the convex hull of the points $(-0.5,-0.5)$, $(0.5,-0.5)$, $(0.5,0)$, $(0,0.5)$, $(-0.5,0.5)$; $y_\Omega\equiv 1$ and $\nu=1$.

A rough initial mesh is built with {\sc Matlab PDEToolbox} (version 2.1) commands \texttt{initmesh} and \texttt{'hmax'} set to $0.2$ and  subsequent nested meshes are obtained by regular diadic refinement using \texttt{refinemesh}. We use our own code to assemble the matrices $\matriz{K}$, $\matriz{M}$ and $\matriz{B}$.

For the pcg we use as initial guess the null vector, a relative tolerance of 1E-10 and the preconditioner  $\matriz{P} = \matriz{M}_{\indice{B},\indice{B}}+\nu\matriz{B}_{\indice{B},\indice{B}}$. At each iteration we have to solve the linear systems \eqref{E200} and \eqref{E300}. To do this, we first obtain the Cholesky factors of $\matriz{K}_{\indice{I},\indice{I}}$ so that at each iteration we only have to solve triangular systems. The interior mesh nodes were ordered using a symmetric approximate minimum degree permutation in order to minimize the number of nonzero elements in the Cholesky factors of $\matriz{K}_{\indice{I},\indice{I}}$ using \textsc{Matlab} command \texttt{symamd}. An analogous reordering of the boundary nodes is done to optimize the sparsity pattern of the Cholesky factors of the preconditioner. This reorderings and factorizations take more or less the same cputime  than each single iteration of the pcg. This time is included in $t_P$.

For reference and possible double-check, we also report on the optimal value of the functional. We have marked with $\infty$ the experiments in which we have run out of RAM.

From the data in Table \ref{T31} it is clear that the proposed iterative method for the reduced problem \eqref{E600} has an overall better performance for Dirichlet control problems.

\begin{table}
\tbl{Execution times for the 2D problem in Example \ref{Ex31}. $\nu=1$.}
{
\begin{tabular}{cc|r|r|r|r|r|c|c}\toprule
&$h$ &$N$ & $N_\indice{I}$& $N_\indice{B}$ & $t_D$ (s) & $t_P$ (s) & pcg & $J(\bar u_h)$\\ \midrule
&$0.2\times 2^{-4}$ &10657 & 10337 & 320 & 0.5 & 0.2 & 7 &0.3470689275  \\
&$0.2\times 2^{-5}$& 42305 & 41665 & 640 &  4.1 & 1.0 & 7 &0.3471023330 \\
&$0.2\times 2^{-6}$ &168577 & 167297 & 1280 & 46.1 & 5.9 & 7 &0.3471129922 \\
&$0.2\times 2^{-7}$ &673025 & 670465 & 2560 & 453.0  & 31.0 & 7 &0.3471163823 \\
&$0.2\times 2^{-8}$ & 2689537 & 2684417 & 5120 &$\infty$  & 177.5 & 7 &0.3471174585\\
\bottomrule
\end{tabular}
}
 \label{T31}
\end{table}

\end{example}

\begin{remark}\label{R34}
We have also coded Algorithm 1 in \cite{Herzog-Sachs2010}, though the practical comparison with the other two methods is less clear.

The main difficulty is the choice of appropriate scalar products. For instance, we have tried (using the notation in  \cite{Herzog-Sachs2010}) $\matriz{X}=\matriz{K}+\matriz{B}$, $\matriz{X}=\matriz{K}+\matriz{M}$ or  $\matriz{X}=\matriz{K}+\matriz{M}+\matriz{B}$ for the scalar product in $Y_h$ and $\matriz{Q}=\matriz{K}_{\indice{I},\indice{I}}$  for the scalar product in $Y_{h0}$, but we have observed that the number of iterations needed to achieve the prescribed accuracy grows as the number of unknowns $N$ increases. For the first three meshes in the 2D example described in Table \ref{T31} and $\nu=1$ we obtain 126, 173 and 236 iterations respectively.
\end{remark}

\begin{example}\label{Ex33}In this example we show the effect of the use of $\matriz{P}=\matriz{M}_{\indice{B},\indice{B}}+\nu\matriz{B}_{\indice{B},\indice{B}}$ as a preconditioner to solve \eqref{E600}. In this case $\nu = 0.01$, $\Omega = \{x=re^{i\theta}\in\mathbb{R}^2:\ r<1,\ 0<\theta<11\pi/12\}$, and we use a mesh family graded at the origin with parameter $\mu = 13/33\approx 0.4$; see \cite{ASW1996}.
In Table \ref{T33} we compare the number of iterations used to reach the prescribed accuracy of $1$E$-10$ without and with preconditioner. This example shows very clearly the effectiveness of this strategy of preconditioning.

\begin{table} [h!]
\tbl{Number of conjugate gradient iterations without and with preconditioner in Example \ref{Ex33}. Graded mesh and $\nu = 0.01$.}
{
\begin{tabular}{cr|r|r|c|c|c}\toprule
&$N$ & $N_\indice{I}$& $N_\indice{B}$ &No $\matriz{P}$ &$\matriz{P}$ & $J(\bar u_h)$\\ \midrule
&289   & 225  & 64    &        46  &          26 & 0.0228920077\\
&1089  & 961  &128    &        103 &          24& 0.0230219261\\
&4225  & 3969 & 256   &        214 &          24 & 0.0231321072\\
&16641 &16129 & 512   &        421 &          24 & 0.0231982941\\
&66049 & 65025 & 1024 &      733    &          26 & 0.0232287479\\ \bottomrule
\end{tabular}
}
 \label{T33}
\end{table}

\end{example}

\begin{example}\label{Ex32}In this example we show that the number of pcg iterations for each fixed $h$ is independent of the value of $\nu\geq 0$ for all $\nu<\nu_0$. We take the same domain and meshes as in the 2D problem in Example \ref{Ex31} and set $y_\Omega(x)=|x|^2$, so that $J(\bar u)>0$ even for $\nu=0$. The results are summarized in Table \ref{TEx32}.
\begin{table}[h!]
\tbl{Dependence of the number of pcg iterations w.r.t. the regularization parameter in Example \ref{Ex32}.}
{
\begin{tabular}{cr|ccccccc}\toprule
&$h\setminus\nu$ & 1E4 & 1E2 & $1$ &  1E-2 &  1E-4 & 1E-6 &0 \\ \midrule
&$0.2\times2^{-5}$ & 2 & 3 & 7 & 26 & 49 & 49 & 49 \\
&$0.2\times2^{-6}$ & 2 & 3 & 7 & 28 & 63 & 65 & 65 \\
&$0.2\times2^{-7}$& 2 & 3 & 7 & 28 & 81 & 86 & 86 \\
\bottomrule
\end{tabular}
}
\label{TEx32}
\end{table}
\end{example}

\begin{example}\label{Ex3D}\textbf{A 3D example in a polyhedron.}
Up to our best knowledge, there is not a general theory about regularity of the solutions or approximation results for 3D Dirichlet control problems posed on polyhedral domains. In \cite{Deckelnick-Gunther-Hinze2009}, the authors study a semi-discrete or variational approach to 3D problems on regular domains. Although the semi-discrete approach coincides with the full approach for unconstrained problems, we cannot profit their results since the regularity of the solutions in a smooth domain is (see \cite[Lemma 2.1]{Deckelnick-Gunther-Hinze2009}) much higher than the one we may obtain in polyhedral domains.

For this example we will take $\Omega$ the unit cube $(-0.5,0.5)^3$, $y_\Omega\equiv 1$ and $\nu = 1$. First, we obtain a regularity result for the solution of our problem.

\begin{proposition}Let $\Omega\subset\mathbb{R}^3$ be the interior of a rectangular parallelepiped and $y_\Omega\in L^p(\Omega)$ for some $3<p<+\infty$. Consider $\bar u$ the solution of problem $(P^U)$. Then, $\bar u\in W^{1-1/p}(\Gamma)$, $\bar y \in W^{1,p}(\Omega)$ and $\bar\varphi\in W^{2,p}(\Omega)$. Moreover, $\bar u\equiv 0$ on the edges and corners of $\Gamma$.
\end{proposition}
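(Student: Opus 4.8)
The plan is a bootstrap around the first-order optimality system. Being convex, $(P^U)$ is equivalent to $J'(\bar u)=0$, which (for $\nu>0$, the case treated in the example) unfolds into: $\bar y=S\bar u$ is harmonic with $\bar y=\bar u$ on $\Gamma$; $\bar\varphi=\varphi_{\bar u}\in H^1_0(\Omega)$ solves $-\Delta\bar\varphi=\bar y-y_\Omega$; and $\nu\bar u=\partial_n\bar\varphi$ on $\Gamma$. Regularity will be transported around this loop using three properties of the rectangular box $\Omega$: (i) it is convex, so $-\Delta\varphi=f\in L^2(\Omega)$, $\varphi|_\Gamma=0$ gives $\varphi\in H^2(\Omega)$; (ii) the maximum principle gives $S\colon L^\infty(\Gamma)\to L^\infty(\Omega)$; and, crucially, (iii) all edges of $\Omega$ are right dihedral angles and all vertices right trihedral corners, so every edge and vertex singular exponent of the Dirichlet Laplacian is at least $2$ (the associated singular functions being polynomials such as $x_ix_j$ or $x_1x_2x_3$), whence for \emph{every} $1<p<\infty$ one has both the $W^{2,p}$-estimate for $-\Delta\varphi=f\in L^p(\Omega)$, $\varphi|_\Gamma=0$, and the $W^{1,p}$-estimate for the harmonic extension, $S\colon W^{1-1/p,p}(\Gamma)\to W^{1,p}(\Omega)$. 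I would take (iii) from the polyhedral elliptic regularity theory (cf.\ \cite{Grisvard1992}); the $W^{2,p}$ half also follows by repeated odd reflection of $\Omega$ across its faces.

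Starting from $\bar u\in L^2(\Gamma)$ one has $\bar y\in L^2(\Omega)$ and, by (i), $\bar\varphi\in H^2(\Omega)$; by Lemma \ref{L2.1} applied to $\bar\varphi$ (or the trace theorem for $\nabla\bar\varphi\in H^1(\Omega)^3$), $\bar u=\nu^{-1}\partial_n\bar\varphi$ gains integrability on $\Gamma$, and pushing this back through $S$ (interpolating the transposition estimate $S\colon L^2(\Gamma)\to L^3(\Omega)$ with (ii)) a short finite iteration brings $\bar y-y_\Omega$ into $L^{q}(\Omega)$ and hence, by (iii), $\bar\varphi$ into $W^{2,q}(\Omega)\hookrightarrow C^1(\bar\Omega)$ for some $q\in(3,p]$. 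At this point (see the next paragraph) $\bar u=\nu^{-1}\partial_n\bar\varphi$ vanishes on the edges of $\Gamma$, which legitimizes gluing the face-wise traces of $\nabla\bar\varphi$ and yields $\bar u\in W^{1-1/q,q}(\Gamma)\hookrightarrow L^\infty(\Gamma)$ (using $q>3$), so that $\bar y=S\bar u\in L^\infty(\Omega)$ by (ii). One last pass now closes the argument: $\bar y-y_\Omega\in L^p(\Omega)$, so $\bar\varphi\in W^{2,p}(\Omega)$, so $\bar u=\nu^{-1}\partial_n\bar\varphi\in W^{1-1/p,p}(\Gamma)$, and finally $\bar y=S\bar u\in W^{1,p}(\Omega)$ by (iii) --- the three regularities claimed.

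For the vanishing of $\bar u$ on the edges and corners, note that $q>3$ gives $\bar\varphi\in W^{2,q}(\Omega)\hookrightarrow C^1(\bar\Omega)$, with $\bar\varphi=0$ on $\Gamma$. On the relative interior of a face both tangential derivatives of $\bar\varphi$ vanish, hence by continuity they vanish on the edges of that face too; at an edge shared by two faces the faces are perpendicular, so each face's outward normal is tangential to the other, and combining the two vanishing conditions forces $\nabla\bar\varphi=0$ along the whole edge. Therefore $\bar u=\nu^{-1}\partial_n\bar\varphi=0$ on every edge, and a fortiori at the corners, which lie on the edges.

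The hard part is property (iii): $W^{2,p}$ regularity of the homogeneous Poisson problem and $W^{1,p}$ regularity of the harmonic extension for $p$ arbitrarily large. This genuinely uses that $\Omega$ is \emph{rectangular}; for a generic convex polyhedron these estimates fail once $p$ exceeds a threshold set by the sharpest singular exponent, and the bootstrap would then stall at a finite integrability. Granting (iii), the rest is a routine finite chase around the optimality loop together with the elementary geometry of the edge values.
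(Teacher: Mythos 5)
Your proposal is correct and follows essentially the same strategy as the paper: a bootstrap around the optimality system whose engine is the $W^{2,p}$-regularity (for every $p<\infty$) of the Dirichlet Laplacian on the rectangular box --- which the paper takes from Dauge's theorem rather than Grisvard/reflection --- combined with the same $C^1$ tangential-derivative argument to show $\partial_n\bar\varphi$ vanishes on the edges, which is exactly what legitimizes gluing the face-wise traces into a global $W^{1-1/p,p}(\Gamma)$ function. The only cosmetic difference is the routing of the intermediate integrability gains (you interpolate the transposition estimate with the maximum principle, while the paper passes through $\bar u\in W^{1-1/q,q}(\Gamma)$ for $q<3$ and the Sobolev embedding of $W^{1,q}(\Omega)$); both close the loop.
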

\begin{proof}Since $\bar u\in L^2(\Gamma)$, using transposition and interpolation it is clear that $y\in H^{1/2}(\Omega)$. Classical Sobolev embedding in 3D leads to $y\in L^3(\Omega)$. Since $\Omega$ is a parallelepiped and $y_\Omega$ is regular enough, \cite[Theorem 1]{Dauge1989} states that $\bar\varphi\in W^{2,3}(\Omega)$, and hence $\partial_n\bar\varphi\in \Pi_i W^{2/3,3}(\Gamma_i)$, where $\Gamma_i$, $i=1:6$, are the faces of $\Gamma$. This does not imply immediately that $\bar u$ belongs to $W^{2/3,3}(\Gamma)$ because $2/3\cdot 3=2$, which is the topological dimension of $\Gamma$, and some integral compatibility condition should be imposed on the edges; but for all $q<3$, it is true that $\bar u\in W^{1-1/q,q}(\Gamma)$, and therefore $\bar y\in W^{1,q}(\Omega)$. Using again Sobolev embeddings, we have that $\bar y \in L^s(\Omega)$ for all $s<+\infty$ and hence $\bar y-y_\Omega\in L^p(\Omega)$. Applying once again \cite[Theorem 1]{Dauge1989}, we have that $\bar\varphi\in W^{2,p}(\Omega)$.

Now we have that $\partial_n\bar\varphi\in \Pi_i W^{1-1/p,p}(\Gamma_i)$ and we can prove that if we define $\partial_n\bar\varphi =0 $ on the edges of $\Omega$, then we obtain a continuous function.
To do this,
we use a similar argument to the one used in \cite[Section 4]{Casas-Gunther-Mateos2011} for 2D problems. Since $p>3$, $\bar\varphi\in C^1(\bar\Omega)$. Consider two faces $A$ and $B$ with a common edge $AB$ and let $\tau^1_X$, $\tau^2_X$ be two linearly independent vectors tangent to face $X$ ($X=A$ or $X=B$) such that $\tau^1_A=\tau^1_B$ is also tangent to the edge $AB$. Since $\bar\varphi = 0$ on $\Gamma$, we have that for every $x\in A$, $\nabla\bar\varphi(x)\cdot\tau^1_A = \nabla\bar\varphi(x)\cdot\tau^2_A = 0$ and for every $x\in B$,
$\nabla\bar\varphi(x)\cdot\tau^1_B = \nabla\bar\varphi(x)\cdot\tau^2_B = 0$. So for $x\in AB$, we have that $\nabla\bar\varphi(x)=0$  and therefore $\nabla\bar\varphi(x)\cdot n$ can be seen as a continuous function  if we set its value to $0$ on the edges, despite the jump discontinuities of the normal vector $n$.

So $\bar u$ is continuous and hence $\bar u\in W^{1-1/p,p}(\Gamma)$. The regularity of the optimal state follows from the trace theorem; see e.g. \cite{nevcas2012direct}.
\end{proof}
Using this regularity, an error estimate can be proved for the example problem.
\begin{proposition}Let $\Omega$ be a rectangular parallelepiped and $y_\Omega\in L^p(\Omega)$ for some $3<p<+\infty$. Consider $\bar u$ be the solution of problem $(P^U)$ and $\bar u_h$ be the solution of $(P^U_h)$. Then there exists some $h_0>0$ and $C>0$ such that for all $0<h<h_0$
\[\|\bar u-\bar u_h\|_{L^2(\Gamma)}\leq C h^{1-1/p}.\]
\end{proposition}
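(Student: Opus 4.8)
The plan is to derive the error estimate from a Céa-type argument combined with the regularity $\bar u\in W^{1-1/p,p}(\Gamma)$ just obtained. First I would recall that both $\bar u$ and $\bar u_h$ solve variational inequalities (in fact equalities, since the problem is unconstrained): $J'(\bar u)v=0$ for all $v\in L^2(\Gamma)$ and $J_h'(\bar u_h)v_h=0$ for all $v_h\in U_h$. Testing the continuous equation with $v=\bar u_h-\Pi_h\bar u$ (or with a suitable discrete function) and the discrete equation with $v_h=\bar u_h-\Pi_h\bar u$, subtracting, and using the coercivity of $J$ that comes from the $\nu>0$ term together with the stability $\|S_hu_h\|_{L^2(\Omega)}\le\sqrt C\|u_h\|_{L^2(\Gamma)}$ from \cite{MayRannacherVexler2013}, I would obtain an estimate of the form
\[\|\bar u-\bar u_h\|_{L^2(\Gamma)}\le C\Big(\|\bar u-I_h\bar u\|_{L^2(\Gamma)}+\|(S-S_h)\bar u\|_{L^2(\Omega)}+\|(S^*-S_h^*)(\bar y-y_\Omega)\|_{L^2(\Gamma)}\Big),\]
i.e. the control error is controlled by the interpolation error of the optimal control plus the finite element errors in the state and adjoint state equations evaluated at the optimal functions.

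Next I would estimate each term on the right-hand side. For the interpolation error, since $\bar u\in W^{1-1/p,p}(\Gamma)$ and $1-1/p<1$, standard interpolation estimates on the boundary mesh give $\|\bar u-I_h\bar u\|_{L^2(\Gamma)}\le Ch^{1-1/p}\|\bar u\|_{W^{1-1/p,p}(\Gamma)}$ (here one needs $p>3=d$ so that the trace is continuous and $I_h$ is well defined, which is exactly the hypothesis). For the state equation error, the regularity $\bar y\in W^{1,p}(\Omega)$ with $p>3$ gives $\bar y\in H^{1+s}(\Omega)$ for some $s$, but more directly one uses an Aubin–Nitsche duality argument: $\|(S-S_h)\bar u\|_{L^2(\Omega)}$ is bounded by $Ch^{1-1/p}$ using the $W^{1,p}$ regularity of $\bar y$ and the $W^{2,p}$ regularity of the adjoint appearing in the duality argument. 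Similarly, for the adjoint term, since $\bar\varphi\in W^{2,p}(\Omega)$ and the discrete variational normal derivative converges, one gets $\|(S^*-S_h^*)(\bar y-y_\Omega)\|_{L^2(\Gamma)}\le Ch^{1-1/p}$; the relevant finite element estimates for the variational normal derivative under this kind of regularity are essentially those of \cite{Casas-Raymond2006,MayRannacherVexler2013} adapted to the parallelepiped.

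The main obstacle I anticipate is the estimate of the normal-derivative (adjoint) error in $L^2(\Gamma)$: the discrete variational normal derivative $\partial_n^h\bar\varphi_h$ is not a standard trace, and its convergence rate is governed by the regularity of $\bar\varphi$ near the edges and corners of the cube, where the $W^{2,p}$ regularity is exactly borderline. One has to be careful that the rate $h^{1-1/p}$ is not spoiled by the corner/edge singularities of $\bar\varphi$; this is why $p$ is taken strictly less than $+\infty$ and why Dauge's result \cite{Dauge1989} was invoked to get $\bar\varphi\in W^{2,p}(\Omega)$ for $p>3$ rather than full $H^2$-plus regularity. A secondary technical point is that, because $\nu>0$ here, the coercivity constant in the Céa argument is bounded below independently of $h$, so no superconvergence or dual-weighted trick is needed — the estimate is the "expected" one and the argument closes cleanly once the three interpolation/FEM estimates above are in place. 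I would also remark that, since all constants depend on $\|\bar u\|_{W^{1-1/p,p}(\Gamma)}$, $\|\bar y\|_{W^{1,p}(\Omega)}$ and $\|\bar\varphi\|_{W^{2,p}(\Omega)}$, which are finite by the previous proposition, the final bound $\|\bar u-\bar u_h\|_{L^2(\Gamma)}\le Ch^{1-1/p}$ follows.
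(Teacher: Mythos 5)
Your proposal follows essentially the same route as the paper, which omits the proof entirely and states only that it ``follows the same guidelines as those of \cite{Casas-Raymond2006} or \cite[Section 6.1]{Mateos-Neitzel2015}'' --- namely the variational-inequality/C\'ea-type argument you describe, splitting the error into the interpolation error of $\bar u$ and the finite element errors for the state and the discrete variational normal derivative, with the rate $h^{1-1/p}$ driven by the regularity $\bar u\in W^{1-1/p,p}(\Gamma)$, $\bar\varphi\in W^{2,p}(\Omega)$ from the preceding proposition. Your identification of the normal-derivative estimate as the delicate step is consistent with those references (cf.\ \cite[Theorem 5.7]{Casas-Raymond2006}), so the sketch is correct and matches the intended argument.
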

The proof follows the same guidelines as those of \cite{Casas-Raymond2006} or \cite[Section 6.1]{Mateos-Neitzel2015} and thus will be omitted. An interesting remark is that for 2D problems, we can deduce uniform convergence for the controls using the $L^2(\Gamma)$ estimate and an inverse inequality, since the boundary is 1D. This does not work for 3D problems with the error estimate at hand, since now $\Gamma$ is 2D.

To solve the problem we have used a regular mesh of identical cubes of size $h$, each of them divided into 6 tetrahedra according to the Kuhn triangulation of the cube (see e.g. \cite{Plaza2007}). Up to our best knowledge, the current version of the {\sc Matlab} {\sc PDEToolbox} (version 2.1) computes an approximation of $\matriz{B}$ using the barycenter formula. Although this does not affect the order of convergence for the FEM, this matrix plays a central role in Dirichlet control problems (for instance, $\matriz{B}_{\indice{B},\indice{B}}$ is not singular, but the barycenter approximation may be singular), so we have computed it in an exact way (using the mid-sides formula). Mesh data, computation times and optimal values are displayed in Table \ref{T32}.

\begin{table}
\tbl{Execution times for the 3D problem in Example \ref{Ex31}. $\nu=1$.}
{
\begin{tabular}{cl|r|r|r|r|r|c|c}\toprule
&$h$&$N$ & $N_\indice{I}$& $N_\indice{B}$ & $t_D$ (s) & $t_P$ (s) & pcg & $J(\bar u_h)$\\ \midrule
&$2^{-4}$&4913 &   3375 &   1538 &   0.7     &    0.2 &   6 & 0.4142332683\\
&$2^{-5}$& 35937 &  29791 &   6146 &  123.2s      &    5.2 &   6 & 0.4159847757\\
&$2^{-6}$&  274625 &  250047 &  24578 &  $\infty $     &  183.7 &   6 & 0.4164610762 \\
\bottomrule
\end{tabular}
}
 \label{T32}
\end{table}

\end{example}
\section{Control constrained problems}
\label{S4}
Problem $(P^C)$ has a unique solution $\bar u\in L^2(\Gamma)$ that satisfies $J'(\bar u)(u-\bar u)\geq 0$ for all $u\in U_{\alpha,\beta}$.
For every $0<h<h_0$, problem $(P^C_h)$ has also a unique solution $\bar u_h$ that satisfies $J'_h(\bar u_h)(u_h-\bar u_h)\geq 0$ for all $u_h\in U^h_{\alpha,\beta}$. The problems being convex, these conditions are also sufficient. Moreover it is known that $\bar u_h\to\bar u$ strongly in $L^2(\Gamma)$ and also error estimates are available in terms of the mesh size $h$ when $\nu>0$ and the domain is convex in 2D. See \cite{Casas-Raymond2006,Casas-Sokolowski2010}. In \cite{Deckelnick-Gunther-Hinze2009} the  smooth (2D and 3D) case is treated using variational approach, whose optimization process is different from the one we are presenting in the work at hand: {in \cite[Ch. 2.1]{Gunther2010}, the problem is solved using a fixed point method, convergent for $\nu$ large enough; see \cite{Hinze-Vierling2012} and Remark \ref{R41} below for the convergence of the semismooth Newton method.}
\subsection{Continuous problem}
We can formulate first order optimality conditions as: there exist unique $\bar u\in L^2(\Gamma)$, $\bar y\in H^{1/2}(\Omega)$ and $\bar\varphi\in H^1_0(\Omega)$ such that
\begin{subequations}
\begin{eqnarray}
-\Delta\bar y &=& 0\phantom{xxxx}\mbox{ in }\Omega,\ \bar y=\bar u\mbox{ on }\Gamma,\label{E800}\\
-\Delta\bar \varphi&=&\bar y-y_\Omega\mbox{ in }\Omega,\ \bar\varphi=0\mbox{ on }\Gamma,\label{E900}\\
(-\partial_n\bar\varphi  + \nu\bar u, u-\bar u)&\geq& 0\phantom{xxxx}\ \mbox{ for all }u\in U_{\alpha,\beta}.\label{E950}
\end{eqnarray}

We first describe a semismooth Newton method to solve this optimality system and prove a convergence result for it (see Theorem \ref{T4.2}). Next, we will reformulate the optimality system in terms of the Lagrange multipliers related to the constraints. We will see that this approach is better suited to the discrete problem, but has the drawback that the Newton method related to it is not semismooth; nonetheless, we will prove a convergence result for it; cf. Theorem \ref{T43}.
To facilitate the notation, we will skip the lower bound $\alpha$, and will work only with the constraint $u\leq \beta.$

The variational inequality \eqref{E950} is a projection in $L^2(\Gamma)$. In this case, it is equivalent to a pointwise projection:
\begin{equation}
\nu \bar u(x)=\min\{\nu \beta(x),\partial_n\bar\varphi(x)\}\mbox{ on }\Gamma.\label{E1000}
\end{equation}
\end{subequations}
In order to analyze the semismooth Newton method to solve the optimality system, we define $G:L^2(\Gamma)\to L^2(\Gamma)$ by
\[G(u) = \nu u-\min\{\nu \beta,-S^*(S u-y_\Omega)\}.\]
Thus, solving the optimality system is equivalent to solving
\begin{equation}\label{E1030}G(u)=0.\end{equation}
Given $u\in L^2(\Gamma)$, we define the sets of active and free points related to $u$ as
\[\Gamma_{{A}}(u)=\{x\in\Gamma:\ -S^*(S u-y_\Omega)>\nu \beta \},\ \Gamma_{{F}}(u) = \Gamma\setminus\Gamma_{{A}}(u).\]
Abusing notation and when this does not lead to confusion, we will often drop the $u$ and only write $\Gamma_A$ and $\Gamma_F$.  (Although it is customary to use the word ``inactive'', we have preferred to use ``free'' since we have already used the letter $I$ for the identity matrix $\matriz{I}$ and the interior nodes $\indice{I}$.) $\chi_B$ will denote the characteristic function of a set $B$.
\begin{lemma}\label{L4.1}The relation $G:L^2(\Gamma)\to L^2(\Gamma)$
is slantly differentiable in the sense stated in \cite{Hintermuller-Ito-Kunisch2003} and  $\partial^{CL}G$ semismooth in the sense stated in \cite[Ch. 2]{Hinze_Pinnau_Ulbrich_Ulbrich2009}, where $\partial^{CL}G$ is Clarke's generalized differential \cite{Clarke1983}.

A slanting functional $M(u)\in\mathcal{L}(L^2(\Gamma),L^2(\Gamma))$ is given by
\[M(u)v = \nu v+\chi_{\Gamma_{F}(u)}S^*Sv\]
for all $v\in L^2(\Gamma)$.

Finally, if $\nu>0$, $M(u)$ has an inverse uniformly bounded in $\mathcal{L}(L^2(\Gamma),L^2(\Gamma))$ for all $u\in L^2(\Gamma)$.
\end{lemma}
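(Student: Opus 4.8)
The plan is to reduce the slant differentiability of $G$ to the well-known slant differentiability of the pointwise $\min$ operator acting between $L^q(\Gamma)$ and $L^2(\Gamma)$ with $q>2$, and to obtain the uniform bound on $M(u)^{-1}$ by splitting the equation $M(u)v=g$ according to the active and free sets and exploiting that $S^*S$ is self-adjoint and nonnegative. The first ingredient is a \emph{norm gap}: applying Lemma \ref{L2.1} to the adjoint equation \eqref{E3} shows that $S^*$ maps $L^2(\Omega)$ boundedly into $L^q(\Gamma)$ for some fixed $q>2$, since if $-\Delta\phi=z\in L^2(\Omega)$ with $\phi\in H^1_0(\Omega)$ then $\phi\in\Phi$ and $S^*z=-\partial_n\phi$ satisfies $\|S^*z\|_{L^q(\Gamma)}\le C\|z\|_{L^2(\Omega)}$. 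As $S\in\mathcal L(L^2(\Gamma),L^2(\Omega))$ and $\beta\in C(\Gamma)$, the affine map $F:u\mapsto\nu\beta+S^*(Su-y_\Omega)$ is continuous from $L^2(\Gamma)$ into $L^q(\Gamma)$ with $F'(u)v=S^*Sv$.

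Writing $\min\{a,b\}=a-\max\{0,a-b\}$ gives $G(u)=\nu u-\nu\beta+\max\{0,F(u)\}$. The Nemytskii operator $\max\{0,\cdot\}$ is slantly differentiable from $L^q(\Gamma)$ to $L^2(\Gamma)$ for $q>2$, a slanting function being multiplication by the characteristic function of the set where the argument is positive, and it is semismooth with respect to Clarke's generalized differential in the sense of \cite[Ch.~2]{Hinze_Pinnau_Ulbrich_Ulbrich2009}; see \cite{Hintermuller-Ito-Kunisch2003}. Composing with the continuous affine map $F$ via the chain rule for slant differentiability, and adding the smooth affine term $\nu u-\nu\beta$, shows that $G$ is slantly differentiable and $\partial^{CL}G$ semismooth, with slanting functional $M(u)v=\nu v+\chi_{\{F(u)>0\}}S^*Sv$. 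Since $\{F(u)>0\}=\{-S^*(Su-y_\Omega)<\nu\beta\}$ coincides with $\Gamma_F(u)$ up to the set where $-S^*(Su-y_\Omega)=\nu\beta$, on which the slanting value may be chosen arbitrarily, this is exactly $M(u)v=\nu v+\chi_{\Gamma_F(u)}S^*Sv$; in particular $\|M(u)\|_{\mathcal L(L^2(\Gamma),L^2(\Gamma))}\le\nu+\|S\|^2$ uniformly in $u$, where $\|S\|$ is the norm of $S$ in $\mathcal L(L^2(\Gamma),L^2(\Omega))$.

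For $\nu>0$ I would prove the uniform bound on $M(u)^{-1}$ as follows. Fix $u$, write $\Gamma_A=\Gamma_A(u)$, $\Gamma_F=\Gamma_F(u)$, take $g\in L^2(\Gamma)$, and decompose any $v$ with $M(u)v=g$ as $v=v_A+v_F$ with $v_A=\chi_{\Gamma_A}v$, $v_F=\chi_{\Gamma_F}v$. Multiplying $M(u)v=g$ by $\chi_{\Gamma_A}$ gives $\nu v_A=\chi_{\Gamma_A}g$, hence $\|v_A\|_{L^2(\Gamma)}\le\nu^{-1}\|g\|_{L^2(\Gamma)}$; multiplying by $\chi_{\Gamma_F}$ gives $(\nu\,\mathrm{Id}+\chi_{\Gamma_F}S^*S)v_F=\chi_{\Gamma_F}g-\chi_{\Gamma_F}S^*Sv_A$ on the closed subspace $L^2(\Gamma_F)=\{w\in L^2(\Gamma):w=\chi_{\Gamma_F}w\}$. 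On this subspace $\nu\,\mathrm{Id}+\chi_{\Gamma_F}S^*S$ is self-adjoint, because $(\chi_{\Gamma_F}S^*Sw_1,w_2)_\Gamma=(Sw_1,Sw_2)_\Omega=(w_1,\chi_{\Gamma_F}S^*Sw_2)_\Gamma$ for $w_1,w_2$ supported in $\Gamma_F$, and it is bounded below by $\nu\,\mathrm{Id}$ since $(\chi_{\Gamma_F}S^*Sw,w)_\Gamma=\|Sw\|^2_{L^2(\Omega)}\ge0$; hence its inverse there has norm at most $\nu^{-1}$, uniformly over all measurable $\Gamma_F$. Therefore $\|v_F\|_{L^2(\Gamma)}\le\nu^{-1}(\|g\|_{L^2(\Gamma)}+\|S\|^2\|v_A\|_{L^2(\Gamma)})$, so $\|v\|_{L^2(\Gamma)}\le\nu^{-1}(2+\nu^{-1}\|S\|^2)\|g\|_{L^2(\Gamma)}$. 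This gives injectivity of $M(u)$, and the explicit construction of $v_A$ and $v_F$ from $g$ gives surjectivity, so $M(u)^{-1}\in\mathcal L(L^2(\Gamma),L^2(\Gamma))$ exists with norm bounded by this $u$-independent constant.

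The main obstacle is this last step: $\chi_{\Gamma_F}S^*S$ is not self-adjoint on all of $L^2(\Gamma)$, so a spectral estimate cannot be invoked directly. The remedy is to peel off the active part, on which $M(u)$ reduces to $\nu\,\mathrm{Id}$, and to observe that what remains, viewed on the subspace $L^2(\Gamma_F)$, is self-adjoint, bounded below by $\nu\,\mathrm{Id}$, and has a resolvent bound that does not deteriorate as $\Gamma_F$ shrinks. Everything else is the routine chain rule for slant differentiability, once Lemma \ref{L2.1} has supplied the $L^q$--$L^2$ gap with $q>2$ needed for the pointwise $\max$ to be slantly differentiable.
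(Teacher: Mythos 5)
Your proposal is correct and follows essentially the same route as the paper: the $L^2\to L^q$ gap from Lemma \ref{L2.1} yields the slant differentiability and semismoothness of the pointwise $\max$ via the chain rule, and the inverse bound is obtained by peeling off the active set (where $M(u)$ reduces to $\nu\,\mathrm{Id}$) and exploiting $(\chi_{\Gamma_F}S^*Sw,w)_\Gamma=\|Sw\|^2_{L^2(\Omega)}\geq 0$ on the free part. The only cosmetic difference is that you phrase this last step as a resolvent bound for a self-adjoint operator bounded below on the subspace $L^2(\Gamma_F)$, whereas the paper derives the same inequality through the transposition identity $(y_w,y_w)_\Omega=-(w,\partial_n\varphi_w)_\Gamma$; the mechanism is identical.
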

\begin{proof}Using  \eqref{E2.1} in Lemma \ref{L2.1}, we have  that there exists some $q>2$ such that $S^*S\in \mathcal{L}(L^2(\Gamma),L^q(\Gamma))$. The slant differentibility then follows directly from \cite[Pr 4.1 b)]{Hintermuller-Ito-Kunisch2003} and the semismoothness from \cite[Theorem 2.13]{Hinze_Pinnau_Ulbrich_Ulbrich2009};  see also \cite{Ulbrich2003}.

The expression for the slanting functional follows from the slant derivative of the function $\min(0,\cdot)$ and the chain rule \cite[Theorem 2.10(c)]{Hinze_Pinnau_Ulbrich_Ulbrich2009}.

Finally, given $z\in L^2(\Gamma)$
\[M(u)v = z\iff\left\{\begin{array}{ll}
\nu v = z & \mbox{ on }\Gamma_{A}(u),\\
\nu v =  z -S^*Sv& \mbox{ on }\Gamma_{F}(u).\end{array}\right.\]
This equations can be read as
\begin{eqnarray*}
-\Delta y  =0\mbox{ in }\Omega, & &y =v\mbox{ on }\Gamma, \\
-\Delta \varphi  = y\mbox{ in }\Omega,& & \varphi =0\mbox{ on }\Gamma, \\
\nu v = z  \mbox{ on }\Gamma_{A}(u),&&\nu v = z + \partial_n\varphi \mbox{ on }\Gamma_{F}(u).
\end{eqnarray*}
Let us define $w=\chi_{\Gamma_{F}(u)}v$. Taking into account the definition of $S$ and $S^*$, we have that
$\nu v = \nu w +  z\chi_{\Gamma_{A}(u)}$ and
\begin{align*}
-\Delta y_w  =0\mbox{ in }\Omega,\qquad &y_w =w\mbox{ on }\Gamma,   \\
-\Delta \varphi_w  = y_w\mbox{ in }\Omega,\qquad & \varphi_w =0\mbox{ on }\Gamma, \\
w = 0  \mbox{ on }\Gamma_{A}(u),\qquad&
\nu w = z + \partial_n\varphi_w -  S^*S z \chi_{\Gamma_{A}(u)}\mbox{ on }\Gamma_{F}(u).
\end{align*}
On one hand, using the continuity of $S^*$ (cf. \eqref{E2.1}) and of $S$, we have
\begin{equation}
\| S^*S z \chi_{\Gamma_{A}(u)}\|_{L^2(\Gamma)}\leq C  \|z\|_{L^2(\Gamma_{A}(u))}\leq C \|z\|_{L^2(\Gamma)}.
\end{equation}
On the other hand, using the definition of solution in the transposition sense, we have that
\[(y_w,y_w)_\Omega = -(w,\partial_n\varphi_w)_\Gamma = - \nu (w,w)_{\Gamma} +  (w,z)_{\Gamma} -  (w,S^*Sz \chi_{\Gamma_{A}(u)})_{\Gamma}.\]
So
\begin{eqnarray*}
\nu \|w\|^2_{L^2(\Gamma)} &=&  (w,z)_{\Gamma} - (w,S^*S z \chi_{\Gamma_{A}(u)})_{\Gamma}- \|y_w\|^2_{L^2(\Omega)}\\
&\leq & \|w\|_{L^2(\Gamma)} \|z\|_{L^2(\Gamma)} + \|w\|_{L^2(\Gamma)}\|S^*Sz \chi_{\Gamma_{A}(u)}\|_{L^2(\Gamma)} \\ & \leq& C\|w\|_{L^2(\Gamma)} \|z\|_{L^2(\Gamma)}.
\end{eqnarray*}
And we get $\|w\|_{L^2(\Gamma)}\leq C/\nu  \|z\|_{L^2(\Gamma)}$. Taking into account the definition of $w$ and the condition for $v$ on the active set, we get that $\|v\|_{L^2(\Gamma)}\leq C/\nu \|z\|_{L^2(\Gamma)}$, where $C$ is independent of $u$, and hence $M(u)$ has a uniformly bounded inverse for each $\nu>0$.
\end{proof}
\begin{remark}Notice that in the infinite dimensional case, if $\nu=0$ then $M(u)v = \chi_{\Gamma_{F}(u)}S^*Sv$. In practical cases, it is known (cf. \eqref{E2.1a} or \cite{AMPR2015}) that there exists $t>0$ such that $S^*Sv\in H^t(\Gamma)$ which is compactly embedded in $L^2(\Gamma)$, and hence $M(u)$ does not have a bounded inverse.
\end{remark}

Given a current iterate $u\in L^2(\Gamma)$, we may compute next iterate $u^+$ using a semismooth Newton method as follows:
\[M(u)(u^+-u) = -G(u).\]
Writing this in detail leads to
\[\nu u^+-\nu u +\chi_{\Gamma_{F}(u)}S^*S(u^+-u) = -\nu u+\min\{\nu \beta,-S^*(Su-y_\Omega)\},\]
which means that, if $\nu>0$,
\[\nu u^+-\nu u = -\nu u+\nu \beta\Rightarrow u^+ = \beta\mbox{ on }\Gamma_{{A}}(u),\]
and
\[u^+ = -\frac{1}{\nu}S^*(S u^+-y_\Omega)\mbox{ on }\Gamma_{F}(u).\]
This equations can be read as
\begin{subequations}
\begin{eqnarray}
-\Delta y^+ = 0\mbox{ on }\Omega, && y^+ = u^+\mbox{ on }\Gamma,\label{E4.15}\\
-\Delta \varphi^+ = y^+-y_\Omega\mbox{ on }\Omega, && \varphi^+ = 0\mbox{ on }\Gamma,\label{E4.16}\\
u^+ = \beta\mbox{ on }\Gamma_{A}(u), &&
u^+ = \frac{1}{\nu}\partial_n\varphi^+\mbox{ on }\Gamma_{F}(u).\label{E4.17}
\end{eqnarray}
\end{subequations}
With all these considerations, we can write a semismooth Newton method to solve the optimality system in the infinite dimensional case.

\LinesNumbered
\begin{algorithm}[H]
\caption{Semismooth Newton method to solve \eqref{E800}, \eqref{E900}, \eqref{E1000}\label{A300}}
\DontPrintSemicolon
Initialize $k=0$ and provide $u^0\in L^2(\Gamma)$\;
Set $u = u^k$\;\label{L4.2}
Compute $\Gamma_{A}(u)$ and $\Gamma_{F}(u)$\;
Solve the optimality system \eqref{E4.15} \eqref{E4.16}, \eqref{E4.17}\;
Set $u^{k+1}=u^+$\;
Set $k=k+1$\;
Stop or go to \ref{L4.2}
\end{algorithm}

\begin{theorem}\label{T4.2}The semismooth Newton method described in Algorithm \ref{A300} converges q-superlinealy to $\bar u$ provided $u^0$ is close enough to $\bar u$ in the sense of $L^2(\Gamma)$.
\end{theorem}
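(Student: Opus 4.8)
The plan is to recognize Algorithm \ref{A300} as exactly the semismooth Newton iteration $u^{k+1} = u^k - M(u^k)^{-1}G(u^k)$ for the equation $G(u)=0$ of \eqref{E1030}, and then to invoke the abstract local convergence theorem for slantly differentiable operators, every hypothesis of which has already been checked in Lemma \ref{L4.1}. First I would verify that steps 3--4 of the algorithm do reproduce the Newton update $M(u)(u^+-u)=-G(u)$: this is precisely the content of the computation preceding the algorithm, since expanding $M(u)(u^+-u)=-G(u)$ separately on $\Gamma_A(u)$ and $\Gamma_F(u)$ gives the system \eqref{E4.15}--\eqref{E4.17}, while conversely, because $M(u)$ is invertible (Lemma \ref{L4.1}), that system has a unique solution, which is the Newton step. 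Hence the iteration is well defined and coincides with the semismooth Newton method applied to $G$.

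Next I would quote the abstract result (see \cite[Theorem 1.1]{Hintermuller-Ito-Kunisch2003} or \cite[Theorem 2.12]{Hinze_Pinnau_Ulbrich_Ulbrich2009}): if $G\colon L^2(\Gamma)\to L^2(\Gamma)$ is slantly differentiable on a neighbourhood of a root $\bar u$ with slanting functionals $M(u)$, and if $\|M(u)^{-1}\|_{\mathcal{L}(L^2(\Gamma),L^2(\Gamma))}$ is bounded uniformly for $u$ in that neighbourhood, then the iteration converges q-superlinearly to $\bar u$ for every $u^0$ sufficiently close to $\bar u$ in $L^2(\Gamma)$. By Lemma \ref{L4.1}, $G$ is slantly differentiable on all of $L^2(\Gamma)$ with slanting functional $M(u)v=\nu v+\chi_{\Gamma_F(u)}S^*Sv$, and, since $\nu>0$, the inverses $M(u)^{-1}$ are bounded in $\mathcal{L}(L^2(\Gamma),L^2(\Gamma))$ by a constant independent of $u$. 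Both hypotheses therefore hold, and the theorem follows. For completeness I would record the standard estimate: using $G(\bar u)=0$,
\[u^{k+1}-\bar u = -M(u^k)^{-1}\bigl(G(u^k)-G(\bar u)-M(u^k)(u^k-\bar u)\bigr),\]
so with $\|M(u^k)^{-1}\|\le C_\nu$ and slant differentiability giving $\|G(u^k)-G(\bar u)-M(u^k)(u^k-\bar u)\|_{L^2(\Gamma)} = o(\|u^k-\bar u\|_{L^2(\Gamma)})$ as $u^k\to\bar u$, one gets $\|u^{k+1}-\bar u\|_{L^2(\Gamma)}\le C_\nu\,o(\|u^k-\bar u\|_{L^2(\Gamma)})$; a routine induction shows that once $\|u^0-\bar u\|_{L^2(\Gamma)}$ is small enough all iterates stay in the neighbourhood where this holds and the quotient $\|u^{k+1}-\bar u\|/\|u^k-\bar u\|$ tends to zero.

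The work is essentially done by Lemma \ref{L4.1}, so I do not anticipate a serious obstacle; the only points requiring care are that both slant differentiability and the inverse bound must hold on a whole $L^2(\Gamma)$-neighbourhood of $\bar u$ (which is why it matters that Lemma \ref{L4.1} established them globally), and that the argument genuinely uses $\nu>0$, since, as noted in the remark following Lemma \ref{L4.1}, $M(u)$ fails to be boundedly invertible when $\nu=0$ in the infinite-dimensional setting.
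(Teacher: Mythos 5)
Your proposal is correct and follows essentially the same route as the paper: the paper's proof simply observes that, once Lemma \ref{L4.1} supplies slant differentiability and the uniform bound on $M(u)^{-1}$ for $\nu>0$, the result is a direct consequence of the abstract local convergence theorem for semismooth Newton methods (citing \cite[Theorem 3.4]{Chen-Nashed-Qi2000}, \cite[Theorem 1.1]{Hintermuller-Ito-Kunisch2003}, \cite[Theorem 2.12]{Hinze_Pinnau_Ulbrich_Ulbrich2009}). Your additional verification that the algorithm reproduces the Newton update and your sketch of the standard $o(\|u^k-\bar u\|)$ estimate are consistent elaborations of what the paper leaves implicit.
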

\begin{proof}Once we have proved Lemma \ref{L4.1}, this result is a direct consequence of \cite[Theorem 3.4]{Chen-Nashed-Qi2000}; see also \cite[Theorem 1.1]{Hintermuller-Ito-Kunisch2003} or \cite[Theorem 2.12]{Hinze_Pinnau_Ulbrich_Ulbrich2009}.
 \end{proof}

The discrete version of the variational inequality \eqref{E950} --see \eqref{E418d} below-- does not have a pointwise version analog to \eqref{E1000}. A more convenient approach to the continuous problem, from the point of view of the discretized problem, is obtained using Lagrange multipliers associated to the bound control constraints.  The con of this approach is that we do not obtain a semismooth Newton method, in the sense that the involved functions are known not to be semismooth.
\begin{remark}\label{R41}
 {It must noticed that, when the variational discretization of the control is used, \cite{Hinze2005,Deckelnick-Gunther-Hinze2009,Gunther2010,Hinze-Vierling2012}, the discrete optimal control is obtained as the pointwise projection of the discrete optimal adjoint state as in \eqref{E1000}, so convergence of the semismooth Newton method for such a discretization would follow from \refre{ Theorem 4.2  and Lemma \ref{L4.1} with $S$ replaced by $S_h$}.}
\end{remark}

To simplify the notation, we will reduce the exposition to the case of having only an upper bound $u\leq \beta$ on $\Gamma$.

Condition \eqref{E950} can be replaced by the following pair of equations: there exists also $\bar\lambda\in L^2(\Gamma)$ such that equation \eqref{E1000} can be written as
\begin{subequations}
\begin{eqnarray}
\nu\bar u(x)&=&\partial_n\bar\varphi(x)-\bar\lambda(x)\mbox{ on }\Gamma.\label{E1100}\\
\bar u(x)\leq\beta(x),\ \bar \lambda(x)\geq 0,\ \bar \lambda(x)(\bar u(x)-\beta(x))&=&0\mbox{ for a.e. }x\in\Gamma.\label{E1200}
\end{eqnarray}
\end{subequations}
Condition \eqref{E1200} can be written as
\begin{equation}\label{E1300}
\lambda(x)-\max\{0,\lambda(x)+c(u(x)-\beta(x))\} =0\mbox{ for a.e. }x\in\Gamma,
\end{equation}
which is true for any $c>0$.
Since $u\in L^2(\Gamma)$ appears inside the $\max$ operation, it is known that the operator described in \eqref{E1300} is not semismooth (following \cite[Lemma 2.7]{Hinze_Pinnau_Ulbrich_Ulbrich2009}) or has not a slant derivative (as in \cite[Pr 4.1]{Hintermuller-Ito-Kunisch2003}), so it is not clear that  Newton's method applied to \eqref{E800}, \eqref{E900}, \eqref{E1300} converges.
Define
\[F_c(u,\lambda) = \left(\begin{array}{c}
\displaystyle{\nu u+S^*Su-S^*y_\Omega+\lambda}\\
\displaystyle{\lambda-\max\{0,\lambda+cu-c\beta\}}
\end{array}
\right)
\]
and for each pair control-multiplier $(u,\lambda)\in L^2(\Gamma)\times L^2(\Gamma)$, set
\[\Gamma_{A}(u,\lambda) = \{x\in\Gamma:\ \frac{1}{c}\lambda+u>\beta\},\ \Gamma_{F}(u,\lambda) = \Gamma\setminus \Gamma_{A}(u,\lambda).\]
We have that
\[ \left(\begin{array}{cc}
\nu I+S^*S & I \\
- c\chi_{\Gamma_{A}(u,\lambda)} I & \chi_{\Gamma_{F}(u,\lambda)} I \end{array}
\right)\in \partial^{CL} F_c(u,\lambda).
\]
So, given a current iterate $(u,\lambda)$, a Newton-like iterate to obtain $(u^+,\lambda^+)$ reads as
\begin{subequations}
\begin{eqnarray}
\nu u^+ = -S^*(Su^+-y_\Omega) - \lambda^+,\label{E4.19a}\\
u^+ = \beta \mbox{ on } \Gamma_{A}(u,\lambda),\label{E4.19b}\\
\lambda^+ =0 \mbox{ on } \Gamma_{F}(u,\lambda).\label{E4.19c}
\end{eqnarray}
\end{subequations}
It is remarkable that the parameter $c$ only appears in this equations hidden in the definition of the active set.

\LinesNumbered
\begin{algorithm}[H]
\caption{Newton-like method to solve \eqref{E800}, \eqref{E900}, \eqref{E1300}\label{A400a}}
\DontPrintSemicolon
Initialize $k=0$ and provide $(u^0,\lambda^0)\in L^2(\Gamma)\times L^2(\Gamma)$\;
Set $u = u^k$ and $\lambda = \lambda^k$\;\label{L4.2bis}
Compute $\Gamma_{A}(u,\lambda)$ and $\Gamma_{F}(u,\lambda)$\;
Compute $(u^+,\lambda^+)$ using  \eqref{E4.19a} \eqref{E4.19b}, \eqref{E4.19c}\;
Set $u^{k+1}=u^+$ and $\lambda^{k+1}=\lambda^+$\;
Set $k=k+1$\;
Stop or go to \ref{L4.2bis}
\end{algorithm}

\begin{theorem}\label{T43}If $c=\nu>0 $, the sequence $u^k$ generated by the Newton-like method described in Algorithm \ref{A400a} converges q-superlinearly to $\bar u$ in $L^2(\Gamma)$ provided $u^0$ is close enough to $\bar u$ in the sense of $L^2(\Gamma)$ and $\lambda^0=S^*y_\Omega-(S^*Su^0 + \nu u^0)$.
\end{theorem}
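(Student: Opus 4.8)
The plan is to show that, with the stated coupling $c=\nu>0$ and the prescribed initialization $\lambda^0=S^*y_\Omega-(S^*Su^0+\nu u^0)$, the Newton-like iteration of Algorithm \ref{A400a} produces \emph{exactly} the same sequence of controls $u^k$ as the genuinely semismooth Newton iteration of Algorithm \ref{A300}, whose q-superlinear convergence is already supplied by Theorem \ref{T4.2}. So the whole proof reduces to an inductive check that the two iterations remain ``synchronised''.

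The induction hypothesis I would carry along is the identity
\[
\nu u^k=-S^*(Su^k-y_\Omega)-\lambda^k\qquad\mbox{for all }k\geq 0,
\]
which is precisely the assumed form of $\lambda^0$ for $k=0$. First I would observe that this identity forces the active sets of the two algorithms to coincide: since $c=\nu$, the set $\Gamma_{A}(u^k,\lambda^k)=\{x:\ \frac{1}{c}\lambda^k+u^k>\beta\}$ equals $\{x:\ \lambda^k+\nu u^k>\nu\beta\}$, and substituting the hypothesis turns this into $\{x:\ -S^*(Su^k-y_\Omega)>\nu\beta\}=\Gamma_{A}(u^k)$ in the sense of Algorithm \ref{A300}. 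Next I would compare the updates. On $\Gamma_{F}$, \eqref{E4.19c} gives $\lambda^+=0$, so \eqref{E4.19a} reduces to $\nu u^+=-S^*(Su^+-y_\Omega)=\partial_n\varphi^+$ with $\varphi^+$ as in \eqref{E4.16}, which is exactly \eqref{E4.17} on the free set; on $\Gamma_{A}$, \eqref{E4.19b} gives $u^+=\beta$, again matching \eqref{E4.17}. Hence the control $u^+$ delivered by Algorithm \ref{A400a} solves the same linear system \eqref{E4.15}--\eqref{E4.17} as in Algorithm \ref{A300}, and its unique solvability is the invertibility of the slanting operator $M(u^k)$ established in Lemma \ref{L4.1} (this is where $\nu>0$ is used). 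Finally, \eqref{E4.19a} itself reads $\nu u^+=-S^*(Su^+-y_\Omega)-\lambda^+$, i.e.\ the induction hypothesis at step $k+1$, so the synchronisation is preserved.

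Once this is in place, Theorem \ref{T4.2} applies verbatim to the common sequence $\{u^k\}$: if $u^0$ is close enough to $\bar u$ in $L^2(\Gamma)$, then $u^k\to\bar u$ q-superlinearly. (Convergence $\lambda^k\to\bar\lambda$ would then also follow from the carried identity and the boundedness of $S^*S$, although the statement only asserts convergence of the controls.)

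The only real insight required — the ``hard part'' — is recognising that although $F_c$ is provably not semismooth and Algorithm \ref{A400a} has no general convergence theory, the algebraic choice $c=\nu$ together with a compatible $\lambda^0$ makes its iterates coincide with those of the bona fide semismooth scheme of Algorithm \ref{A300}. After that observation the remainder is the routine bookkeeping sketched above, with Lemma \ref{L4.1} as the sole analytic ingredient.
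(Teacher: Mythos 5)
Your proposal is correct and follows essentially the same route as the paper: both arguments show by induction that, with $c=\nu$ and the compatible choice of $\lambda^0$, the iterates of Algorithm \ref{A400a} coincide with those of the semismooth scheme of Algorithm \ref{A300} (the key step being that the carried identity $\nu u^k+\lambda^k=-S^*(Su^k-y_\Omega)$ makes $\Gamma_A(u^k,\lambda^k)=\Gamma_A(u^k)$), and then invoke Theorem \ref{T4.2}. Your observation that the identity is propagated directly by \eqref{E4.19a} is a slightly cleaner way of phrasing what the paper extracts from \eqref{E4.17bis} and \eqref{E4.23d}, but it is the same argument.
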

\begin{proof}In this case we cannot apply directly the results in \cite[Theorem 3.4]{Chen-Nashed-Qi2000}; see also \cite[Theorem 1.1]{Hintermuller-Ito-Kunisch2003} or \cite[Theorem 2.12]{Hinze_Pinnau_Ulbrich_Ulbrich2009} since $u\in L^2(\Gamma)$ appears inside the $\max$ operation, and it is known that the operator described in \eqref{E1300} is not semismooth (following \cite[Lemma 2.7]{Hinze_Pinnau_Ulbrich_Ulbrich2009}) or has not a slant derivative (as in \cite[Pr 4.1]{Hintermuller-Ito-Kunisch2003}).

We will follow instead the method of proof of \cite[Theorem 4.1]{Hintermuller-Ito-Kunisch2003} and we will show that the sequence $u^k$ generated by Algorithm \ref{A400a} is exactly the same as the one generated by Algorithm \ref{A300}, which we will call $\tilde u^k$.

Writing in detail \eqref{E4.19a} we obtain
\begin{subequations}
\begin{eqnarray}
-\Delta y^+ = 0 \mbox{ on }\Omega, && y^+ = u^+\mbox{ on }\Gamma,\label{E4.20a}\\
-\Delta \varphi^+ = y^+-y_\Omega\mbox{ in }\Omega, && \varphi^+ = 0\mbox{ on }\Gamma,\label{E4.20b}\\
&& \nu u^+ = \partial_n\varphi^+ -\lambda^+\mbox{ on }\Gamma.\label{E4.20c}
\end{eqnarray}
\end{subequations}
Using now \eqref{E4.19b} and \eqref{E4.19c} we obtain
\begin{subequations}
\begin{eqnarray}
-\Delta y^+ = 0\mbox{ on }\Omega, && y^+ = u^+\mbox{ on }\Gamma,\label{E4.15bis}\\
-\Delta \varphi^+ = y^+-y_\Omega\mbox{ on }\Omega, && \varphi^+ = 0\mbox{ on }\Gamma,\label{E4.16bis}\\
u^+ = \beta\mbox{ on }\Gamma_{A}(u,\lambda), &&
u^+ = \frac{1}{\nu}\partial_n\varphi^+\mbox{ on }\Gamma_{F}(u,\lambda),\label{E4.17bis}\\
\lambda^+ = \partial_n\varphi^+-\nu \beta  \mbox{ on }\Gamma_{A}(u,\lambda), &&\lambda^+=0\mbox{ on }\Gamma_{F}(u,\lambda).\label{E4.23d}
\end{eqnarray}
\end{subequations}
Notice that  \eqref{E4.15bis}, \eqref{E4.16bis} and \eqref{E4.17bis} is exactly as \eqref{E4.15}, \eqref{E4.16} and \eqref{E4.17}, provided that $\Gamma_{A}(u,\lambda)= \Gamma_{A}(u)$.

To finish, let us prove by induction that $u^k=\tilde u^k$ and $\Gamma_{A}(u^k,\lambda^k)= \Gamma_{A}(\tilde u^k)$ for all $k\in\mathbb{N}\cup\{0\}$. For $k=0$ it is clear from the definition of the active sets and the choice of $\lambda^0$ made in the assumption of this theorem. Suppose now $u^k=\tilde u^k$ and $\Gamma_{A}(u^k,\lambda^k)= \Gamma_{A}(\tilde u^k)$. It is clear that $u^{k+1}=\tilde u^{k+1}$. On the other hand, from \eqref{E4.17bis} and \eqref{E4.23d} and the choice $c=\nu$ we have that $c u^{k+1}+\lambda^{k+1}= - S^*S(u^{k+1}-y_\Omega)$, and therefore $\Gamma_{A}(u^{k+1},\lambda^{k+1})= \Gamma_{A}(\tilde u^{k+1})$.

The result, hence, follows from Theorem \ref{T4.2}.
\end{proof}
We want to remark here that writting the Newton step as in \eqref{E4.15bis}, \eqref{E4.16bis}, \eqref{E4.17bis} and \eqref{E4.23d} we obtain exactly the same kind of algorithm as the one described in \cite{Kunisch-Rosch2002}

\subsection{Finite dimensional approximation}
Let us focus now on the finite dimensional approximation.
For every $0<h<h_0$, there exist a unique solution $\bar u_h\in U_h$ of $(P_h^{C})$ and unique, $\bar y_h\in Y_h$ and $\bar\varphi_h\in Y_{h0}$ such that
\begin{subequations}
\begin{eqnarray}
a(\bar y_h, z_h)&=&0 \mbox{ for all }z_h\in Y_{h0},\label{E418a}\\
 \bar y_h&\equiv& \bar u_h\mbox{ on }\Gamma,\label{E418b}\\
a( z_h,\bar\varphi_h) & =&(\bar y_h-y_\Omega,z_h)_\Omega \mbox{ for all }z_h\in Y_{h0},\label{E418c}\\
(-\partial_n^h\bar\varphi_h+\nu\bar u_h,u_h-\bar u_h) & \geq&0\mbox{ for all }u_h\in U^h_{\alpha,\beta}.\label{E418d}
\end{eqnarray}
\end{subequations}
This way of writting the optimality system is useful to obtain error estimates (cf. \cite{Casas-Raymond2006,Deckelnick-Gunther-Hinze2009,Mateos-Neitzel2015}). Nevertheless, we cannot deduce a pointwise projection formula for the optimal control from the variational inequality.
\begin{remark}\label{naive}Think of this naive example. Let $\Gamma=[-1,1]$ and take a mesh with nodes $\{-1,0,1\}$. For every $u_h,v_h\in U_h$, we have that
$(u_h,v_h)_\Gamma = \bm{u}^T \matriz{B}\bm{v}$, where $\matriz{B}$ is the mass matrix
\[\matriz{B}=\frac{1}{6}\left(\begin{array}{ccc}
2&1&0\\1&4&1\\0&1&2\end{array}\right).\]
Consider $\alpha = -\infty$, $\beta = 0$, and take , e.g., $\bm{w} =(-2,1,1)^T$. Then we have that
\[(-w_h+\bar u_h,u_h-\bar u_h)\geq 0\ \forall u_h\in U^h_{\alpha,\beta} \implies
\bar u_h = -1.5e_1 + 0 e_2 + 0 e_3\implies \bm{u} = (-1.5,0,0)^T,\]
but
\[\min(\bm{w},\bm{0})= (-2,0,0)^T,\]
and we get different results with the $L^2(\Gamma)$ projection and the pointwise projection.
\end{remark}
To circumvent this difficulty, we rewrite conditions \eqref{E418a}--\eqref{E418d} in order to use standard semismooth Newton method. Taking into account \eqref{E9}, we can write $(P^C_h)$ as
\begin{equation}\begin{array}{l}
\min \frac12 \bm{u}^T \matriz{A}\bm{u} -\bm{f}^T\bm{u}\\
\mbox{ subject to }\bm{\alpha}\leq \bm{u}\leq \bm{\beta},
\end{array}
\label{E1420}\end{equation}
where $\matriz{A}$ and $\bm{f}$ are defined in \eqref{E2end} and \eqref{E2.9} and $\bm{\alpha},\bm{\beta}\in\mathbb{R}^{N_{\indice{B} \times 1}}$ are the vectors whose $j$-component are respectively $\alpha(x_j)$ and $\beta(x_j)$ and the inequalities are understood componentwise. In order to simplify the notation, we will restrict ourselves again to the case without lower bound.
The optimality system can thus be written in the following form: if $\bm{u}$ is the solution of problem \eqref{E1420}, then there exists a unique $\bm{\lambda}\in \mathbb{R}^{N_{\indice{B}}\times 1}$ such that
\begin{equation}\begin{array}{rcl}
\matriz{A}\bm{u}+\bm{\lambda} -\bm{f} &=&0,\\
\bm{\lambda}\geq 0,\ \bm{u}\leq \bm{\beta},\ \bm{\lambda}^T(\bm{u}-\bm{\beta})&=&0.
\end{array}\label{E1450}
\end{equation}
Notice that although we may name $\bar\lambda_h(x) = \sum_{j\in\indice{B}} \lambda_j e_j(x)$ for $x\in \Gamma$ to obtain a Lagrange multiplier $\bar\lambda_h\in U_h$, the complementarity condition with respect to the dot product in $\mathbb{R}^{N_\indice{B}}$, does {\em not} imply that
$(\bar\lambda_h,\bar u_h-\beta)_\Gamma =0,$
which shows again the convenience of using \eqref{E1450} instead of \eqref{E418a}--\eqref{E418d}. To continue, we rewrite again the second condition in \eqref{E1450} to obtain
\begin{equation}\begin{array}{rcl}
\matriz{A}\bm{u}+\bm{\lambda} -\bm{f} &=&0,\\
\bm{\lambda}-\max(0,\bm{\lambda}+c(\bm{u}-\bm{\beta}))&=&0.
\end{array}\label{E1500}
\end{equation}
Notice that \eqref{E1500} looks like a discrete version of the optimality system formed by \eqref{E800}, \eqref{E900} and \eqref{E1300}.
Acting in an analogous way as we did for the continuous problem, we define
\[F_{h,c}(\bm{u},\bm{\lambda}) = \left(\begin{array}{c}
\matriz{A}\bm{u}+\bm{\lambda} -\bm{f}\\
\bm{\lambda}-\max(\bm{0},\bm{\lambda}+c(\bm{u}-\bm{\beta}))
\end{array}\right).
\]
and for every pair $(\bm{u},\bm{\lambda})$, we define the
sets of active  and free
  indexes as
\begin{equation}\label{E4.14}
\indice{A}(\bm{u},\bm{\lambda}) = \{j:\lambda_j+c(u_j-\beta(x_j))> 0\},\
\indice{F}(\bm{u},\bm{\lambda}) = \{j:\lambda_j+c(u_j-\beta(x_j))\leq 0\}.\end{equation}
Abusing notation and when this does not lead to confusion, we will often just write $\indice{A}$ and $\indice{F}$. Notice that $\indice{A}\cup\indice{F}=\indice{B}$ for every possible pair control-multiplier.
\begin{lemma}\label{L4.4}The function $F_{h,c}$ is slantly differentiable. A slanting function for $F_{h,c}$ is
\[M_{h,c}(\bm{u},\bm{\lambda}) = \left(
\begin{array}{cc}
\matriz{A} & \matriz{I}_{\indice{B},\indice{B}}\\
-c \matriz{I}_{\indice{B},\indice{A}} \matriz{I}_{\indice{A},\indice{B}} & \matriz{I}_{\indice{B},\indice{F}} \matriz{I}_{\indice{F},\indice{B}}\end{array}
\right).
\]
Finally, for all $\nu\geq 0$, the inverse of $M_{h,c}(\bm{u},\bm{\lambda}) $ is uniformly bounded w.r.t $(\bm{u},\bm{\lambda})$.
\end{lemma}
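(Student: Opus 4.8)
The plan is to handle the two block-rows of $F_{h,c}$ separately, assemble the slanting function through the finite-dimensional sum and chain rules, and then show that $M_{h,c}(\bm u,\bm\lambda)$ is nonsingular for each of the finitely many active/free partitions, from which the uniform bound on the inverse is immediate.

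First I would note that $(\bm u,\bm\lambda)\mapsto\matriz{A}\bm u+\bm\lambda-\bm f$ is affine, hence $C^1$ with derivative $(\matriz{A},\ \matriz{I}_{\indice{B},\indice{B}})$. For the second block-row I would write it as $\bm\lambda-\max(\bm 0,\ell(\bm u,\bm\lambda))$ with $\ell(\bm u,\bm\lambda)=\bm\lambda+c(\bm u-\bm\beta)$. The componentwise map $\bm t\mapsto\max(\bm 0,\bm t)$ on $\mathbb{R}^{N_\indice{B}}$ is globally Lipschitz and piecewise affine, hence slantly differentiable in the sense of \cite{Hintermuller-Ito-Kunisch2003}, with a slanting function given by the diagonal matrix whose $j$-th entry equals $1$ if $t_j>0$ and $0$ if $t_j\le 0$ (cf. \cite[Pr.~4.1]{Hintermuller-Ito-Kunisch2003}). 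Composing with the affine map $\ell$, whose Jacobian is $(c\matriz{I}_{\indice{B},\indice{B}},\ \matriz{I}_{\indice{B},\indice{B}})$, and applying the chain and sum rules \cite[Theorem 2.10]{Hinze_Pinnau_Ulbrich_Ulbrich2009}, one obtains that $F_{h,c}$ is slantly differentiable and that the $M_{h,c}(\bm u,\bm\lambda)$ of the statement is a slanting function; the index sets in \eqref{E4.14} are precisely $\indice{A}=\{j:\ell_j(\bm u,\bm\lambda)>0\}$ and $\indice{F}=\indice{B}\setminus\indice{A}$, which is why the lower-left block reads $-c\matriz{I}_{\indice{B},\indice{A}}\matriz{I}_{\indice{A},\indice{B}}$ and the lower-right block $\matriz{I}_{\indice{B},\indice{F}}\matriz{I}_{\indice{F},\indice{B}}$.

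Next I would prove invertibility. Fix a partition with $\indice{A}\cup\indice{F}=\indice{B}$ and consider the homogeneous system $M_{h,c}(\bm u,\bm\lambda)(\bm v,\bm\mu)^{T}=\bm 0$. Reading the lower block-row componentwise gives $v_j=0$ for every $j\in\indice{A}$ and $\mu_j=0$ for every $j\in\indice{F}$. The upper block-row gives $\bm\mu=-\matriz{A}\bm v$; since $\bm v$ is supported on $\indice{F}$ while $\mu_j=0$ on $\indice{F}$, this yields $\matriz{A}_{\indice{F},\indice{F}}\bm v_{\indice{F}}=\bm 0$. As established earlier (the lemma on the condition number of $\matriz{A}$), $\matriz{A}$ is symmetric and positive definite for every $\nu\ge 0$, so its principal submatrix $\matriz{A}_{\indice{F},\indice{F}}$ is as well; hence $\bm v_{\indice{F}}=\bm 0$, so $\bm v=\bm 0$ and then $\bm\mu=-\matriz{A}\bm v=\bm 0$. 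The degenerate cases $\indice{F}=\emptyset$ and $\indice{A}=\emptyset$ are covered by the same computation. Thus $M_{h,c}(\bm u,\bm\lambda)$ is a nonsingular $2N_\indice{B}\times 2N_\indice{B}$ matrix for every admissible partition.

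Finally, the uniform bound follows because $M_{h,c}(\bm u,\bm\lambda)$ depends on $(\bm u,\bm\lambda)$ only through the pair $(\indice{A}(\bm u,\bm\lambda),\indice{F}(\bm u,\bm\lambda))$, of which there are at most $2^{N_\indice{B}}$; therefore $\sup_{(\bm u,\bm\lambda)}\|M_{h,c}(\bm u,\bm\lambda)^{-1}\|$ is a maximum over finitely many finite quantities. The only genuinely load-bearing step is this block elimination together with the positive definiteness of $\matriz{A}_{\indice{F},\indice{F}}$; the remainder is bookkeeping. I would also point out explicitly that the constant produced here depends on $h$ (and on $c$), unlike the $h$-independent bound of the continuous counterpart in Lemma \ref{L4.1}, so that the semismooth Newton convergence results may be invoked only for each fixed discretization.
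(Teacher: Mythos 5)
Your proof is correct, and the slant-differentiability part coincides with the paper's (finite-dimensional semismoothness of $\max\{\bm 0,\cdot\}$ plus a forward computation with the chain rule), but your treatment of invertibility and of the uniform bound takes a genuinely different route. The paper does not argue via the homogeneous system: it solves $M_{h,c}(\bm u,\bm\lambda)(\bm v,\bm\mu)^T=(\bm z,\bm\eta)^T$ explicitly by recognizing it as the KKT system of an equality-constrained quadratic program with the constraint $\bm w_{\indice{A}}=-\bm\eta_{\indice{A}}/c$, reduces to an unconstrained problem in $\bm v_{\indice{F}}$, and bounds the solution using the fact that the smallest eigenvalue of $\matriz{I}_{\indice{F},\indice{B}}\matriz{A}\matriz{I}_{\indice{B},\indice{F}}$ is at least $\lambda_1(\matriz{M})+\nu\lambda_1(\matriz{B}_{\indice{B},\indice{B}})$, which is independent of the partition; uniformity in $(\bm u,\bm\lambda)$ then comes with an explicit constant. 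You instead show injectivity by block elimination ($v_j=0$ on $\indice{A}$, $\mu_j=0$ on $\indice{F}$, hence $\matriz{A}_{\indice{F},\indice{F}}\bm v_{\indice{F}}=\bm 0$) and obtain uniformity by observing that $M_{h,c}$ depends on $(\bm u,\bm\lambda)$ only through one of at most $2^{N_{\indice{B}}}$ partitions, so the supremum of the inverse norms is a maximum over a finite set. Your argument is more elementary and suffices for the stated claim; the paper's buys two things in exchange for the extra structure: a partition-independent quantitative constant, and the reduced linear system in $\bm v_{\indice{F}}$, which is exactly the form \eqref{E1700} used computationally in each Newton step. Your closing remark that the constant depends on $h$, $\nu$ and $c$ (in contrast with the $h$-independent bound of Lemma \ref{L4.1}) agrees with what the paper states.
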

\begin{proof}
In finite dimension, the function $\max\{\bm{0},\cdot\}$ is semismooth due to Rademacher's theorem (see e.g. \cite[Example 2.4]{Hinze_Pinnau_Ulbrich_Ulbrich2009} or \cite[Lemma 3.1]{Hintermuller-Ito-Kunisch2003}).

A forward computation shows that $M_{h,c}$ is a slanting function.

Let us prove the uniform boundedness of the inverse. Given $\bm{z}\in \mathbb{R}^{N_{\indice{B}}}$ and $\bm{\eta}\in \mathbb{R}^{N_{\indice{B}}}$, we have that
\[M_{h,c}(\bm{u},\bm{\lambda}) \left[\begin{array}{c}\bm{v}\\ \bm{\mu}\end{array}\right] =
\left[\begin{array}{c}\bm{z}\\ \bm{\eta}\end{array}\right]\iff\left\{\begin{array}{rcl}
\matriz{A}\bm{v}+\bm{\mu} & = & \bm{z},\\
-c\bm{v}_\indice{A} & = &\bm{\eta}_\indice{A},\\
\bm{\mu}_\indice{F} & = & \bm{\eta}_\indice{F}.\end{array}\right.
\]
We write the last equalities as
\begin{eqnarray*}
\matriz{A}\bm{v}+
\left[
\begin{array}{c}
\bm{\mu}_\indice{A} \\ \bm{0}_\indice{F}
\end{array}
\right]
& = &
\bm{z}-
\left[
\begin{array}{c}\bm{0}_\indice{A}\\ \bm{\eta}_\indice{F}
\end{array}
\right] =:\bm{\zeta}, \\
-c\bm{v}_\indice{A} & = &\bm{\eta}_\indice{A}.
\end{eqnarray*}
This is the optimality system of the  equality constrained optimization problem
\begin{eqnarray*}
&&\bm{v} = \arg\min_{\bm{w}\in \mathbb{R}^{N_{\indice{B}}}} \frac{1}{2}\bm{w}^T\matriz{A}\bm{w} - \bm{\zeta}^T\bm{w},\\
&&\text{subject to }\bm{w}_\indice{A} = -{\bm{\eta}_\indice{A}}/{c}.
\end{eqnarray*}
Writing $\bm{w} = \matriz{I}_{\mathcal{B},\mathcal{F}}\bm{w}_\indice{F} + \matriz{I}_{\mathcal{B},\mathcal{A}}\bm{w}_\indice{A} $ and taking into account the equality constraint, we have that $\bm{v}_\indice{F}$ is the solution of the following unconstrained optimization problem

\[ \bm{v}_\indice{F} = \displaystyle\arg\min_{\bm{w}_\indice{F}} \frac12 \bm{w}_\indice{F}^T
(\matriz{I}_{\indice{F},\indice{B}}\matriz{A}\matriz{I}_{\indice{B},\indice{F}})
\bm{w}_\indice{F} -
\big(\matriz{I}_{\indice{F},\indice{B}}( \bm{\zeta}  + \matriz{A} \matriz{I}_{\indice{B},\indice{A}} \bm{\eta}_\indice{A}/c )\big)^T
\bm{w}_\indice{F}\]
and therefore $\bm{v}_\indice{F}$ is the solution of the following linear system
\[\matriz{I}_{\indice{F},\indice{B}}\matriz{A}\matriz{I}_{\indice{B},\indice{F}}
\bm{v}_\indice{F} = \matriz{I}_{\indice{F},\indice{B}}( \bm{\zeta} +\matriz{A} \matriz{I}_{\indice{B},\indice{A}} \bm{\eta}_\indice{A} /c)\]
Since $\matriz{A}$ is symmetric and positive definite, so is $\matriz{I}_{\indice{F},\indice{B}}\matriz{A}\matriz{I}_{\indice{B},\indice{F}}$, and its smallest eigenvalue is bounded from below by $0<\lambda_1(\matriz{M})+\nu\lambda_1(\matriz{B}_{\indice{B},\indice{B}})$; see  \eqref{E35}. Therefore, the previous system is solvable and there exists a constant $C>0$, that may depend on $h$, $\nu$ and $c$, but is independent of $\bm{u}$,  such that $\|\bm{v}_\indice{F}\|\leq C (\|\bm{z}\|+\|\bm{\eta}\|)$. From this it is straight to deduce that  $\|\bm{v}\|+ \|\bm{\mu}\|\leq C (\|\bm{z}\|+\|\bm{\eta}\|)$ and hence $M_{h,c}$ has a uniformly bounded inverse.
\end{proof}

With these considerations, given a current iterate $(\bm{u},\bm{\lambda})$ with active and free index sets $\indice{A}=\indice{A}(\bm{u},\bm{\lambda})$ and $\indice{F}=\indice{F}(\bm{u},\bm{\lambda})$, we can compute the next step of Newton's method
$(\bm{u}^+,\bm{\lambda}^+)$ solving
\[\begin{array}{l}
\bm{u}^+ = \displaystyle\arg\min_{\bm{u}\in\mathbb{R}^{N_\indice{B}\times 1}} \frac12 \bm{u}^T \matriz{A}\bm{u} -\bm{f}^T\bm{u}\\
\phantom{xxxx}\mbox{ subject to }\bm{u}_\indice{A}= \bm{\beta}_\indice{A}.\\
\mbox{ Set }\bm{\lambda}^+ =  \bm{f} -\matriz{A}\bm{u}^+.
\end{array}
\]
At each iteration, this is equivalent to solving the following unconstrained optimization problem in the lower-dimensional space
$\mathbb{R}^{N_{\indice{F}} \times 1}$:
\begin{equation}\begin{array}{l}
 \bm{u}^+_\indice{F} = \displaystyle\arg\min_{\bm{u}_\indice{F}
\in\mathbb{R}^{N_\indice{F}\times 1}} \frac12 \bm{u}_\indice{F}^T
(\matriz{I}_{\indice{F},\indice{B}}\matriz{A}\matriz{I}_{\indice{B},\indice{F}})
\bm{u}_\indice{F} -
\big(\matriz{I}_{\indice{F},\indice{B}}( \bm{f} -\matriz{A} \matriz{I}_{\indice{B},\indice{A}} \bm{\beta}_\indice{A} )\big)^T
\bm{u}_\indice{F} \\
\bm{u}^+_\indice{A}= \bm{\beta}_\indice{A}
\\
\bm{\lambda}^+ =  \bm{f} -\matriz{A}\bm{u}^+
\end{array}\label{E1700}\end{equation}
Again the preconditioned gradient method works fine to solve this problem. A good preconditioner in practice is $\matriz{P} = \matriz{M}_{\indice{F},\indice{F}}+\nu\matriz{B}_{\indice{F},\indice{F}}$.

Alternatively, taking into account the definition of $\matriz{A}$
 and $\bm{f}$ (see also Algorithms \ref{A100} and \ref{A200} respectively), we may write one step of the semismooth Newton algorithm as
\begin{equation}
\begin{array}{rcl}
\left[\begin{array}{ccc}
\matriz{M}+{\nu}\matriz{B} & -\matriz{K}_{:,\indice{I}}  & \matriz{I}_{:,\indice{A}}\\
-\matriz{K}_{\indice{I},:} & \matriz{O}_{\indice{I},\indice{I}} & \matriz{O}_{\indice{I},\indice{A}}\\
 \matriz{I}_{\indice{A},:} & \matriz{O}_{\indice{A},\indice{I}} & \matriz{O}_{\indice{A},\indice{A}}
\end{array}\right]
\left[\begin{array}{c}
\bm{y}^+ \\
\bm{\varphi}^+_\indice{I}\\
\bm{\lambda}^+_\indice{A} \end{array}
\right] &=&
\left[\begin{array}{c}
\matriz{M}\bm{y}_\Omega\\
\bm{0}_{\indice{I}}\\
\matriz{I}_{\indice{A},\indice{B}}\bm{\beta}
\end{array}
\right]\\
\bm{u}^+ &=& \bm{y}^+_{\indice{B}}\\
\bm{\lambda}^+_{\indice{F}} &=&\bm{0}_{\indice{F}}
\end{array}
\label{E1600}
\end{equation}
As for the unconstrained problem, direct methods for small size problems or preconditioned conjugate gradient techniques described in \cite{Herzog-Sachs2010} can be applied to solve this system at each Newton step.

\begin{remark}\label{R45}Notice that the only information from iteration $k$ used to compute iteration $k+1$ is the set of active indexes. Therefore, when $\indice{A}(\bm{u}_{k+1},\bm{\lambda}_{k+1})=\indice{A}(\bm{u}_{k},\bm{\lambda}_{k})$ we have reached an stationary point. This is usually the criterion used to stop the semismooth Newton method. Another consequence of this is that to initiate the algorithm, in principle we do not not need an initial guess $\bm{u}^0$ and $\bm{\lambda}^0$, but only an initial guess for the active set. We include nevertheless an initial guess for the control variable in Algorithm \ref{A450} because we use $\bm{u}_k$ as the initial guess for the pcg to obtain $\bm{u}_{k+1}$.
\end{remark}

\LinesNumbered
\begin{algorithm}[H]
\caption{Semismooth Newton method for $(P^C_h)$\label{A450}}
\DontPrintSemicolon
Set $k=0$ and initialize $\bm{u}_0$ and $\bm{\lambda}_0$; compute $\indice{A}_0 = \indice{A}(\bm{u}_{0},\bm{\lambda}_{0})$\;
Set $\indice{A}=\indice{A}_k$ and $\indice{F}=\indice{B}\setminus\indice{A}$\;\label{L2}
Compute $(\bm{u}_{k+1},\bm{\lambda}_{k+1}) = (\bm{u}^+,\bm{\lambda}^+)$  using \eqref{E1700} [or \eqref{E1600}]\;
Set $\indice{A}_{k+1} = \indice{A}(\bm{u}_{k+1},\bm{\lambda}_{k+1})$\;
Stop or set $k= k+1$ and return to \ref{L2}\;
\end{algorithm}

The following convergence result is a direct consequence of Lemma \ref{L4.1} and \cite[Theorem 3.4]{Chen-Nashed-Qi2000}; see also \cite[Theorem 1.1]{Hintermuller-Ito-Kunisch2003} or \cite[Theorem 2.12]{Hinze_Pinnau_Ulbrich_Ulbrich2009}.

\begin{theorem}The sequence $\bm{u}_k$ generated by Algorithm \ref{A450} converges q-superlinearly to $\bm{u}$, the solution of \eqref{E1420}.
\end{theorem}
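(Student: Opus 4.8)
The plan is to recognize that Algorithm \ref{A450} is nothing more than the semismooth Newton iteration applied to the equation $F_{h,c}(\bm u,\bm\lambda)=\bm 0$, and then to quote Lemma \ref{L4.4} together with the abstract semismooth Newton convergence theorem. First I would collect the reformulations already in place: by \eqref{E1450} the minimizer $\bm u$ of \eqref{E1420} is characterized, together with a unique multiplier $\bm\lambda$, by the discrete KKT system, and rewriting the complementarity condition with the $\max$ operator turns this into \eqref{E1500}, i.e. into $F_{h,c}(\bm u,\bm\lambda)=\bm 0$. For any fixed $c>0$ this pair is the unique zero of $F_{h,c}$, so it is enough to prove that $(\bm u_k,\bm\lambda_k)\to(\bm u,\bm\lambda)$ q-superlinearly, and the claim for $\bm u_k$ follows by projecting on the first component.

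The one genuine computational step is to check that the update defined by \eqref{E1700} (equivalently, its un-eliminated block form \eqref{E1600}) coincides with the semismooth Newton step
\[
\left(\begin{array}{c}\bm u^+\\\bm\lambda^+\end{array}\right)=\left(\begin{array}{c}\bm u_k\\\bm\lambda_k\end{array}\right)-M_{h,c}(\bm u_k,\bm\lambda_k)^{-1}F_{h,c}(\bm u_k,\bm\lambda_k),
\]
with $M_{h,c}$ the slanting function of Lemma \ref{L4.4} and $\indice A=\indice A(\bm u_k,\bm\lambda_k)$, $\indice F=\indice F(\bm u_k,\bm\lambda_k)$. Reading $M_{h,c}(\bm u_k,\bm\lambda_k)(\bm u^+-\bm u_k,\bm\lambda^+-\bm\lambda_k)^T=-F_{h,c}(\bm u_k,\bm\lambda_k)$ block by block, and using that the second component of $F_{h,c}$ reduces to $-c(\bm u_k-\bm\beta)$ on $\indice A$ and to $\bm\lambda_k$ on $\indice F$, the linear system collapses to $\matriz A\bm u^++\bm\lambda^+=\bm f$, $\bm u^+_{\indice A}=\bm\beta_{\indice A}$ and $\bm\lambda^+_{\indice F}=\bm 0_{\indice F}$, which is exactly \eqref{E1700}; substituting the finite element relations \eqref{E200}--\eqref{E400} for $\matriz A$ and $\bm f$ produces \eqref{E1600}. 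In particular the new iterate depends only on the active set $\indice A(\bm u_k,\bm\lambda_k)$, as recorded in Remark \ref{R45}. Note also that, unlike the continuous problem in Theorem \ref{T43}, no detour through an auxiliary algorithm is needed, because in finite dimension $\max\{\bm 0,\cdot\}$ is genuinely semismooth by Rademacher's theorem.

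With this identification Lemma \ref{L4.4} supplies precisely the hypotheses of the abstract result: $F_{h,c}$ is slantly differentiable with slanting function $M_{h,c}$, and $M_{h,c}(\bm u,\bm\lambda)^{-1}$ is bounded uniformly in $(\bm u,\bm\lambda)$ by a constant depending only on the fixed data $h$, $\nu$, $c$. Applying \cite[Theorem 3.4]{Chen-Nashed-Qi2000} (equivalently \cite[Theorem 1.1]{Hintermuller-Ito-Kunisch2003} or \cite[Theorem 2.12]{Hinze_Pinnau_Ulbrich_Ulbrich2009}) at the zero $(\bm u,\bm\lambda)$ of $F_{h,c}$ then gives well-definedness and q-superlinear convergence of the semismooth Newton iterates once they start close enough to $(\bm u,\bm\lambda)$, whence $\bm u_k\to\bm u$ q-superlinearly. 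I expect the only real obstacle to be the bookkeeping in the middle paragraph: matching the active/free partition \eqref{E4.14} built into Algorithm \ref{A450} with the one defining $M_{h,c}$, verifying the block elimination that leads from \eqref{E1700} to \eqref{E1600}, and keeping the sign conventions straight; everything after that is a direct citation. (Upgrading ``close enough'' to genuinely global convergence would require an additional structural property of $\matriz A$, which is not assumed here.)
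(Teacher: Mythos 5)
Your proposal is correct and follows essentially the same route as the paper, which disposes of this theorem in one sentence by invoking the finite-dimensional slant differentiability and uniform invertibility of the slanting function (Lemma \ref{L4.4} -- the paper's citation of Lemma \ref{L4.1} here is evidently a slip) together with the abstract semismooth Newton convergence result of \cite[Theorem 3.4]{Chen-Nashed-Qi2000}. Your explicit block-by-block verification that the update \eqref{E1700}/\eqref{E1600} is the Newton step for $F_{h,c}$, and your remark that the convergence is local unless extra structure of $\matriz{A}$ is assumed, merely make explicit what the paper leaves implicit.
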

\begin{example}We resume the 2D problem described in Example \ref{Ex31} and the 3D problem described in Example \ref{Ex3D}, with the upper constraint  $\beta\equiv0.16$. We test Algorithm \ref{A450}.

Following the tip of Theorem \ref{T43}, we have taken the parameter $c=\nu$ and $\bm{\lambda}_0=\bm{f}-\matriz{A}\bm{u}_0$. Nevertheless, we have not been able to observe any problem for different values of $c$.  The seed is set to $\bm{u}_0=\bm{0}$.

To solve the optimality system of the unconstrained optimization problem at each Newton iterate, we have used the preconditioned conjugate gradient method for \eqref{E1700} with initial guess $\matriz{I}_{\indice{F},\indice{B}}\bm{u}_k$. The use of the reduced problem is even more advisable in this case than it was in Examples \ref{Ex31} and \ref{Ex3D}, because the size of the system \eqref{E1600} is $N+N_\indice{I}+N_{\indice{A}}$, which in any case is greater or equal than the size of the system \eqref{E500}, which is $N+N_\indice{I}$.

In Tables \ref{T4.1} and \ref{T5.3} and we report on the number of Newton iterations for each mesh size as well as the total number of conjugate gradient iterations. For reference, we also report on the number of active nodes and the optimal solution of the discrete problem being approximated.

\begin{table}[h!]
\tbl{Newton iterations and total pcg iterations for a 2D control constrained problem}
{
\begin{tabular}{cc|r|r|c|c|c}\toprule
&$h$ & $N_\indice{B}$ & $N_\indice{A}$ & Newton & pcg  & $J(\bar u_h)$ \\ \midrule
&$0.2\times2^{-4}$ &320 &  245 &  3 &  16 & 0.3537665421\\
&$0.2\times2^{-5}$ & 640 &  491 &  4 &  20 & 0.3537991309\\
&$0.2\times2^{-6}$ & 1280 &  977 &  4 &  19 & 0.3538099358\\
&$0.2\times2^{-7}$ & 2560 &  1951 &  3 &  16 & 0.3538134437\\
&$0.2\times2^{-8}$ & 5120 &  3905 &  4 &  19 & 0.3538145736 \\
\bottomrule
\end{tabular}
}
\label{T4.1}
\end{table}
\begin{table}[h!]
\tbl{Newton iterations and total pcg iterations for a 3D control constrained problem}
{
\begin{tabular}{cc|r|r|c|c|c}\toprule
&$h$ & $N_\indice{B}$ & $N_\indice{A}$ & Newton & pcg  & $J(\bar u_h)$ \\\midrule
&$2^{- 2}$ &      98 &      54  & 2 &  7 & 0.3935682160\\
&$2^{- 3}$ &     386 &     294  & 2 &  7 & 0.4007301110\\
&$2^{- 4}$ &    1538 &     894  & 3 &  16 & 0.4104264396\\
&$2^{- 5}$ &    6146 &    3210  & 3 &  15 & 0.4153200584\\
&$2^{- 6}$ &   24578 &   11958  & 4 &  19 & 0.4173850169\\
\bottomrule
\end{tabular}
}
\label{T5.3}
\end{table}

\end{example}
\textbf{Choosing the initial guess for Algorithm \ref{A450}.} If possible, it is a good idea to select an initial point for the Newton method close to the solution. If we are dealing with the problem for some mesh size $h$, a good candidate for the initial iteration is $I_h\bar u_{h_{-1}}$, the solution in a coarser mesh with $h_{-1}\geq h$. This idea can be iterated with a mesh family with parameters $h_{-M},\dots,h_0=h$. The computation time should decrease for fine meshes provided that the interpolation can be carried out in an effective way. For instance, using nested meshes.
\begin{example}Solving the last 2D problem in Table \ref{T4.1} takes a cputime of 620 seconds. Using a nested iteration, we solve the problem with 5120 boundary nodes in 436  seconds, with just 2 Newton iterates and 7 conjugate gradient iterates at the finest level. The times include mesh generations and matrix assembly. To generate the meshes and interpolate the solution, we have used {\sc Matlab} {\sc PDEToolbox} command \texttt{refinemesh}.

In the 3D case (see Table \ref{T5.3}), solving the problem for $h=2^{-6}$ takes 681 seconds. Using a nested mesh strategy this time is reduced to 404 seconds with 2 Newton iterates and 9 pcg iterations at the finest level. In our example we have been able to make an efficient interpolation of the solution at the previous level just using  {\sc Matlab}'s 
\texttt{interp3}.
\end{example}
\begin{example}\label{Enu0}\textbf{Absence of Tikhonov parameter} In the same 2D pentagonal domain, we take again $\beta\equiv 0.16$, but now $\alpha \equiv -1.2$, $\nu = 0$, $y_\Omega \equiv 1$ if $x_1>0.25$, and $y_\Omega \equiv -1$ if $x_1<0.25$. We are able to solve the finite dimensional problem, but up to our best knowledge, there is no proof available about the convergence of the discrete optimal solutions to the continuous optimal solution.

To solve the problem we follow a nested mesh strategy. The number of pcg iterations per Newton iterate grow as $h$ tends to zero, as is to be expected from Corollary \ref{C3.2}; see Table \ref{T7}.

The constraints have been chosen in such way that we seemingly find both a bang-bang part and a singular arc. We have sketched a plot of the discrete optimal control for $h=0.2\times 2^{-8}$ in Figure \ref{F1}. The boundary has been stretched on a 1D line and its corners have been marked with circles on the graph of $\bar u_h$.

\begin{table}[h!]
\tbl{Convergence history for a bilaterally constrained 2D problem with $\nu=0$.}
{
\begin{tabular}{cc|c|c|c}\toprule
&$h$   & Newton & pcg  & $J(\bar u_h)$ \\ \midrule
&$0.2\times2^{-0}$ &  2 &  31 & 0.0694073016\\
&$0.2\times2^{-1}$ &  3 &  61 & 0.0849064798\\
&$0.2\times2^{-2}$ &  4 &  104 & 0.0920895196\\
&$0.2\times2^{-3}$ &  4 &  131 & 0.0971597067\\
&$0.2\times2^{-4}$ &  2 &  87 & 0.0998850162\\
&$0.2\times2^{-5}$ &  4 &  194 & 0.1011428500\\
&$0.2\times2^{-6}$ & 1 &  73 & 0.1017242162\\
&$0.2\times2^{-7}$ & 2 &  185 & 0.1020255084\\
&$0.2\times2^{-8}$ &   2 &  236 & 0.1021724081       \\
\bottomrule
\end{tabular}
}
\label{T7}
\end{table}

\begin{figure}
\begin{center}
\includegraphics[scale = 0.5]{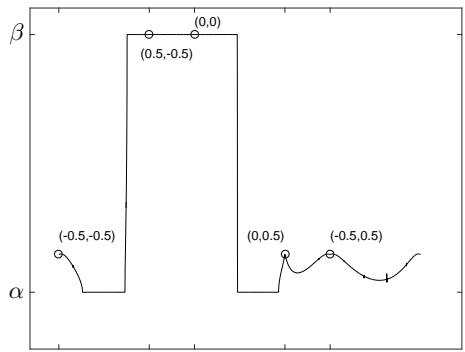}
\caption{\label{F1}Discrete optimal control at the finest level for Example \ref{Enu0}. The polygonal boundary has been stretched over a line and the vertexes have been marked with circles on the graph of $\bar u_h$.}
\end{center}
\end{figure}
\end{example}
\section{State constraints}
\label{S5}
{In the rest of the work, we will suppose that $d=2$,} $\Omega$ is convex and $\Gamma$ is polygonal. According to \cite{Mateos-Neitzel2015}, if problem $(P^S)$ admits a feasible Slater point, then it has a unique solution $\bar u\in H^{1/2}(\Gamma)$ and there exist $\bar y\in H^1(\Omega)\cap C(\bar\omega)$, $\bar\varphi\in W^{1,t}_0(\Omega)$ for all $t<2$ and two nonnegative measures $\bar\mu^+,\ \bar\mu^-\in \mathcal{M}(\bar\omega)$ such that
\begin{subequations}
\begin{align}
-\Delta\bar y = 0\mbox{ in }\Omega,\ \bar y = \bar u\mbox{ on }\Gamma,\label{E4.1a}\\
-\Delta\bar\varphi = \bar y-y_\Omega+\bar\mu^+-\bar\mu^-\mbox{ in }\Omega,\ \bar\varphi = 0\mbox{ on }\Gamma,\label{E4.1b}\\
\bar u(x) = \frac{1}{\nu}\partial_n\bar\varphi(x)\mbox{ on }\Gamma,\label{E4.1c}\\
\langle\bar\mu^+-\bar\mu^-,y-\bar y\rangle \leq 0\ \forall y\in K_{a,b},\label{E4.1d}
\end{align}
\end{subequations}
and $\supp\bar\mu^+\subset\{\bar y = b\},$ $\supp\bar\mu^-\subset\{\bar y = a\}$. In this case the adjoint state equation \eqref{E4.1b} must be understood in the transposition sense, \cite[Equation (4)]{Mateos-Neitzel2015}, and $\langle\cdot,\cdot\rangle$ denotes the duality product between $\mathcal{M}(\bar\omega)$ and $C(\bar\omega)$.

As is pointed out in \cite[page 197]{Bergounioux-Kunisch2002}, or \cite[page 878]{Hintermuller-Ito-Kunisch2003}, the PDAS or semismooth Newton methods described in the previous section are not applicable to this problem, since the multiplier is a measure. In \cite{Ito-Kunisch2003} a Moreau-Yosida regularization is proposed. This strategy is further investigated in \cite{Hintermuller-Kunisch2006}. Let us briefly describe this method. Again, to simplify the notation, we will do the exposition for the unilateral constraint $y\leq b$ in $\bar\omega$.

Given a shift function $\mu^*\in L^q(\omega)$ for some $q>2$ and a parameter $\gamma>0$, we will solve the unconstrained problem
\[(Q^\gamma) \min_{u\in L^2(\Gamma)} J(u) +\frac{1}{2\gamma}\int_{\bar\omega}\max\{0,\mu^*+\gamma(Su-b)\}^2 dx.\]
This problem has a unique solution $u^\gamma\in H^{1/2}(\Gamma)$ for every $\gamma >0$. Since the functional is of class $C^1$, first order optimality conditions can be derived directly from the work \cite{Casas-Raymond2006}: there exist unique $y^\gamma\in H^1(\Omega)$ and $\varphi^\gamma\in H^1_0(\Omega)$ such that
\begin{subequations}
\begin{align}
-\Delta y^\gamma &= 0\mbox{ in }\Omega,\ y^\gamma = u^\gamma\mbox{ on }\Gamma,\label{E4.2a}\\
-\Delta\varphi^\gamma &= y^\gamma-y_\Omega+\chi_{\bar\omega}\max\{0,\mu^*+\gamma(y^\gamma-b)\} \mbox{ in }\Omega,\ \varphi^\gamma = 0\mbox{ on }\Gamma,\label{E4.2b}\\
\nu u^\gamma &= \partial_n\varphi^\gamma\mbox{ on }\Gamma.\label{E4.2c}\end{align}
\end{subequations}

\begin{theorem}\label{T51}The semismooth Newton method to solve \eqref{E4.2a}, \eqref{E4.2b}, \eqref{E4.2c} converges locally $q$-superlinearly.
\end{theorem}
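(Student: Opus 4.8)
The plan is to recast the penalized optimality system \eqref{E4.2a}--\eqref{E4.2c} as a single nonlinear equation in the control variable and then invoke the abstract semismooth Newton theory, exactly along the lines of Lemma~\ref{L4.1} and Theorem~\ref{T4.2}. Eliminating $y^\gamma=Su^\gamma$ and $\varphi^\gamma$ by means of $S$ and $S^*$, the system \eqref{E4.2a}--\eqref{E4.2c} is equivalent to $G^\gamma(u)=0$, where $G^\gamma:L^2(\Gamma)\to L^2(\Gamma)$ is the gradient of the reduced functional of $(Q^\gamma)$,
\[
G^\gamma(u)=\nu u+S^*(Su-y_\Omega)+S^*\bigl(\chi_{\bar\omega}\max\{0,\mu^*+\gamma(Su-b)\}\bigr).
\]
One step of the Newton method is $M^\gamma(u)(u^+-u)=-G^\gamma(u)$; writing this out and undoing the eliminations recovers the linearization of \eqref{E4.2a}--\eqref{E4.2c} that is used in practice.

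First I would show that $G^\gamma$ is slantly differentiable in the sense of \cite{Hintermuller-Ito-Kunisch2003} (equivalently $\partial^{CL}G^\gamma$-semismooth, \cite[Ch.~2]{Hinze_Pinnau_Ulbrich_Ulbrich2009}). The only nonsmooth ingredient is the superposition operator $v\mapsto\max\{0,v\}$. Since $\bar\omega\Subset\Omega$, the harmonic function $Su$ is $C^\infty$ near $\bar\omega$, so by interior elliptic estimates $u\mapsto Su|_{\bar\omega}$ is a bounded linear map from $L^2(\Gamma)$ into $L^q(\omega)$ for every $q<\infty$; together with the hypothesis $\mu^*\in L^q(\omega)$ for some $q>2$, the argument $\mu^*+\gamma(Su-b)$ lies in $L^q(\omega)$ with $q>2$, which supplies the norm gap required by \cite[Pr.~4.1]{Hintermuller-Ito-Kunisch2003} (see also \cite{Ulbrich2003}). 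Composing with the bounded linear operators $\chi_{\bar\omega}$ and $S^*$ and using the chain rule \cite[Theorem~2.10(c)]{Hinze_Pinnau_Ulbrich_Ulbrich2009} yields slant differentiability, with slanting functional
\[
M^\gamma(u)v=\nu v+S^*Sv+\gamma\,S^*\bigl(\chi_{\omega_A(u)}Sv\bigr),\qquad
\omega_A(u)=\{x\in\bar\omega:\ \mu^*(x)+\gamma(Su(x)-b(x))>0\}.
\]

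Next I would prove that $M^\gamma(u)$ has an inverse bounded uniformly in $u$. Here $M^\gamma(u)$ is self-adjoint and, for $\nu>0$, coercive: testing $M^\gamma(u)v=z$ against $v$ and using $(S^*w,v)_\Gamma=(w,Sv)_\Omega$ gives
\[
\nu\|v\|_{L^2(\Gamma)}^2+\|Sv\|_{L^2(\Omega)}^2+\gamma\|\chi_{\omega_A(u)}Sv\|_{L^2(\omega)}^2=(z,v)_\Gamma,
\]
and dropping the two nonnegative terms yields $\|v\|_{L^2(\Gamma)}\le\nu^{-1}\|z\|_{L^2(\Gamma)}$, a bound independent of $u$ (and of $\gamma$). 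This is the analogue of the last assertion of Lemma~\ref{L4.1}, and is in fact simpler, since no characteristic function cuts the control space. With these two facts, local $q$-superlinear convergence of $u^{k+1}=u^k-M^\gamma(u^k)^{-1}G^\gamma(u^k)$ to the unique solution $u^\gamma$ follows verbatim from \cite[Theorem~3.4]{Chen-Nashed-Qi2000} (equivalently \cite[Theorem~1.1]{Hintermuller-Ito-Kunisch2003} or \cite[Theorem~2.12]{Hinze_Pinnau_Ulbrich_Ulbrich2009}), as in the proof of Theorem~\ref{T4.2}. The only genuinely delicate point is the norm-gap step for the penalty term: one must make sure that the interior smoothness of $Su$ on $\bar\omega$ together with $\mu^*\in L^q(\omega)$, $q>2$, places the argument of the $\max$ in $L^q$ with $q$ strictly above $2$; the rest is routine.
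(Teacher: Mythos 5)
Your proposal is correct and follows essentially the same route as the paper: recast \eqref{E4.2a}--\eqref{E4.2c} as $G^\gamma(u)=0$, obtain semismoothness of the $\max$ term from a norm gap ($L^q$ with $q>2$), exhibit the same slanting operator $M^\gamma(u)v=\nu v+S^*Sv+\gamma S^*\chi_{\omega_A(u)}Sv$, prove a $u$-independent bound $\|v\|_{L^2(\Gamma)}\le C\nu^{-1}\|z\|_{L^2(\Gamma)}$ by the same energy/transposition identity (you test against $v$ directly where the paper writes out the state and adjoint PDEs, but the computation is identical), and conclude via the abstract semismooth Newton theorem. The only cosmetic difference is that you justify the norm gap by interior elliptic regularity of the harmonic function $Su$ on $\bar\omega\Subset\Omega$, whereas the paper cites the regularity results of \cite{AMPR2015,Mateos-Neitzel2015}; both are adequate.
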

\begin{proof}
We may write the system \eqref{E4.2a}, \eqref{E4.2b}, \eqref{E4.2c} as the equation $G(u)=0$, where $G:L^2(\Gamma)\to L^2(\Gamma)$ is given by
\[G(u) = \nu u+S^*S u-S^*y_\Omega +S^*\big( \chi_{\bar\omega}\max\{0,\mu^*+\gamma(Su-b)\} \big).\]
Using the regularity results in \cite{AMPR2015,Mateos-Neitzel2015}, together with the election of $\mu^*$ in $L^q(\omega)$  we have that $\mu^*+\gamma(Su-b)\in L^q(\Omega)$ for some $q>2$, and hence $G$ is semismooth in the sense stated in \cite{Hinze_Pinnau_Ulbrich_Ulbrich2009}; see Theorems 2.13 and 2.10(c) in the aforementioned reference.

Define now ${\omega_{A}}(u)=\{x\in \bar\omega:\ \mu^*+\gamma(Su-b) > 0\}$. A slant differential of $G(u)$ is given by the expression

\[M(u) v =  \nu v + S^*Sv +\gamma S^*\chi_{\omega_{A}(u)} Sv.\]
Let us see that the inverse of $M(u)$ is uniformly bounded in $\mathcal{L}(L^2(\Gamma),L^2(\Gamma))$ for all $u\in L^2(\Gamma)$. For any $z\in L^2(\Gamma)$, the equation $M(u)v=z$ can be read as
\begin{eqnarray*}
-\Delta y = 0\mbox{ in }\Omega,&& y=v\mbox{ on }\Gamma,\\
-\Delta\varphi = y+\gamma \chi_{\omega_{A}(u)} y\mbox{ in }\Omega,&& \varphi=0\mbox{ on }\Gamma,\\
&&\nu v  = \partial_n\varphi + z\mbox{ on }\Gamma.
\end{eqnarray*}
Using the definition of solution in the transposition sense and the last equation of this system, we have that
\[
(y,y+\gamma\chi_{\omega_{A}(u)} y)_\Omega = -(v,\partial_n\varphi) = -\nu(v,v)_\Gamma+\nu(v,z)_\Gamma
\]
and hence
\[\nu \|v\|_{L^2(\Gamma)}^2 = \nu(v,z)_\Gamma-\int_\Omega y^2 -\gamma\int_{\omega_{A}(u)}y^2\leq \nu\|v\|_{L^2(\Gamma)}\|z\|_{L^2(\Gamma)}.\]
So we have that $\|v\|_{L^2(\Gamma)}\leq \|z\|_{L^2(\Gamma)}$ and therefore $M(u)$ is uniformly bounded in $\mathcal{L}(L^2(\Gamma),L^2(\Gamma))$ for all $u\in L^2(\Gamma)$.
\end{proof}
 When $\gamma\to+\infty$, $u^\gamma\to\bar u$; see \cite[Theorem 3.1]{Ito-Kunisch2003} or \cite[Pr 2.1]{Hintermuller-Kunisch2006}.

\medskip

Let us turn now to the discrete problem $(P^S_h)$. Consider the space $\mathcal{M}_h\subset\mathcal{M}(\bar\omega)$ which is spanned by the Dirac measures corresponding to the nodes $\{x_j\}_{j\in \indice{J}}$, where $\indice{J}=\{j\colon x_j\in\bar\omega\}$. Following \cite[Corollary 2]{Mateos-Neitzel2015}, if the continuous problem $(P^S)$ has a regular feasible Slater point, then $(P^S_h)$ has a unique solution $\bar u_h$. Moreover  there exist unique $\bar y_h\in K_{a,b}^h$, $\bar\varphi_h\in Y_{h,0}$ and two nonnegative measures $\bar\mu_h^+, \bar\mu_h^-\in \mathcal{M}_h$ such that
\begin{subequations}
\begin{align}
a( \bar y_h,z_h)&=0\ \forall z_h\in Y_{h0},\ \bar y_h = \bar u_{h}\mbox{ on }\Gamma,\label{E5.3a}\\
a( z_h,\bar\varphi_h)&=(\bar y_h-y_\Omega,z_h)+
\langle \bar\mu_h^+-\bar\mu_h^-,z_h\rangle\ \forall z_h\in Y_{h0},\\
\langle \bar\mu_h^+-\bar\mu_h^-,y_h-\bar y_h\rangle&\leq 0\ \forall y_h\in K_{a,b}^h,\label{E5.3c}\\
\nu\bar u_h&=\partial_n^h\bar\varphi_h\mbox{ on }\Gamma, \label{E5.3d}
\end{align}
\end{subequations}
and $\supp\bar\mu_h^+\subset\{x_j:\,\bar y(x_j)=b(x_j)\}$, $\supp\bar\mu_h^-\subset\{x_j:\,\bar y(x_j)=a(x_j)\}.$

Since we are dealing with a finite dimensional problem and the $\max$ function is known to be semismooth in finite dimension, we could think about applying directly a semismooth Newton method as described in \cite{Bergounioux-Kunisch2002}. Nevertheless, the other assumption fails here: the slant derivative may not have an inverse.
Let us show this. Consider
\[F(\bm{u},\bm{\mu}) = \left(\begin{array}{c}
\matriz{A}\bm{u}-\bm{f}+\matriz{S}^T\bm{\mu} \\
\bm{\mu} -\max\{ 0,\bm{\mu}+\gamma(\matriz{S}\bm{u}-\bm{b})
\end{array}\right).
\]
It is clear that the optimality system \eqref{E5.3a}--\eqref{E5.3d} is equivalent to the equation $F(\bm{u},\bm{\mu})=\bm{0}$. We define the sets of active and free nodes related to a pair control-multiplier as
\[\indice{A}(u_h,\bm{\mu}) = \{j\in \indice{J}:\mu_j+\gamma(S_h(u_h)(x_j)-b_j) >0\},\ \indice{F} = \indice{J} \setminus \indice{A}.\]
The slant derivative of $F(\bm{u},\bm{\mu})$ is given by
\[M(\bm{u},\bm{\mu}) = \left(\begin{array}{cc}
\matriz{A} & \matriz{S}^T \\
-\gamma \matriz{I}_{\indice{J},\indice{A}}\matriz{I}_{\indice{A},:}\matriz{S} & \matriz{I}_{\indice{J},\indice{F}}\matriz{I}_{\indice{F},\indice{J}}
\end{array}\right).\]
\begin{lemma}The inverse of $M(\bm{u},\bm{\mu})$ may not exist for some $(\bm{u},\bm{\mu})$.
\end{lemma}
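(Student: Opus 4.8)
The plan is to produce, for a suitably fine mesh, a single pair $(\bm{u},\bm{\mu})$ at which $M(\bm{u},\bm{\mu})$ has a nontrivial kernel; since $M(\bm{u},\bm{\mu})$ is square, this already gives non-invertibility. The mechanism is purely dimensional: the state constraint is imposed at the nodes of $\bar\omega$, a $d$-dimensional set, while the control is supported on the $(d-1)$-dimensional boundary, so the block built from $\matriz{S}$ in $M(\bm{u},\bm{\mu})$ maps $\mathbb{R}^{N_\indice{B}}$ into $\mathbb{R}^{N_\indice{J}}$, where $N_\indice{J}:=\#\indice{J}$; for $h$ small enough $N_\indice{J}>N_\indice{B}$, because $\{\mathcal{T}_h\}_h$ is regular and $\bar\omega\subset\Omega$ has positive measure, so $N_\indice{J}\geq c_\omega h^{-d}$ whereas $N_\indice{B}\leq C h^{-(d-1)}$.

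Fix such a mesh and take $\bm{u}=\bm{0}$ (hence $S_h(0)\equiv 0$) together with any $\bm{\mu}$ all of whose components exceed $\gamma\max_{\bar\omega}b$. Then $\mu_j+\gamma(S_h(0)(x_j)-b_j)>0$ for every $j\in\indice{J}$, i.e. $\indice{A}(\bm{0},\bm{\mu})=\indice{J}$ and $\indice{F}=\emptyset$. For this pair the lower-right block $\matriz{I}_{\indice{J},\indice{F}}\matriz{I}_{\indice{F},\indice{J}}$ vanishes, and, writing $\matriz{S}_\indice{J}:=\matriz{I}_{\indice{J},:}\matriz{S}\in\mathbb{R}^{N_\indice{J}\times N_\indice{B}}$,
\[M(\bm{0},\bm{\mu})=\left(\begin{array}{cc}\matriz{A} & \matriz{S}_\indice{J}^{\,T}\\[2pt] -\gamma\,\matriz{S}_\indice{J} & \matriz{O}_{\indice{J},\indice{J}}\end{array}\right).\]
Since $\mathrm{rank}\,\matriz{S}_\indice{J}^{\,T}=\mathrm{rank}\,\matriz{S}_\indice{J}\leq N_\indice{B}<N_\indice{J}$, the rank--nullity theorem provides $\bm{\sigma}\in\mathbb{R}^{N_\indice{J}}\setminus\{\bm{0}\}$ with $\matriz{S}_\indice{J}^{\,T}\bm{\sigma}=\bm{0}$. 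Taking $\bm{v}=\bm{0}$, the nonzero pair $(\bm{v},\bm{\sigma})$ satisfies $\matriz{A}\bm{v}+\matriz{S}_\indice{J}^{\,T}\bm{\sigma}=\bm{0}$ and $-\gamma\,\matriz{S}_\indice{J}\bm{v}=\bm{0}$, so it lies in $\ker M(\bm{0},\bm{\mu})$; hence $M(\bm{0},\bm{\mu})$ is singular.

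I do not anticipate a genuine obstacle; the only care needed is to keep track of which index sets the identity sub-blocks act on, and to notice that $\indice{A}(\bm{u},\bm{\mu})$ can be forced to equal $\indice{J}$ because $\bm{\mu}$ is a free argument of the slanting map. It is instructive to contrast this with Lemma~\ref{L4.4}: there, eliminating the free-multiplier block leaves the control-space matrix $\matriz{I}_{\indice{F},\indice{B}}\matriz{A}\matriz{I}_{\indice{B},\indice{F}}$, which is symmetric positive definite; here the coupling passes through $\matriz{S}$, whose range is at most $N_\indice{B}$-dimensional, so no choice of penalty $\gamma$ or shift $\mu^*$ can cure the rank deficiency once more than $N_\indice{B}$ nodes are active --- the finite-dimensional reflection of the fact that the continuous state-constraint multiplier is merely a measure. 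If a self-contained numerical witness were preferred, an explicit miniature instance (say two constrained interior nodes, one boundary node) could be built in the spirit of Remark~\ref{naive}.
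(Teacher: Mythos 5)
Your proof is correct and rests on exactly the same mechanism as the paper's: the coupling block built from $\matriz{S}$ has rank at most $N_\indice{B}$, so once more than $N_\indice{B}$ nodes are active the square matrix $M(\bm{u},\bm{\mu})$ must be singular. The only (cosmetic) difference is that you exhibit non-injectivity via an explicit kernel vector $(\bm{0},\bm{\sigma})$ after forcing $\indice{A}=\indice{J}$ with a large $\bm{\mu}$, whereas the paper argues non-surjectivity (the active rows give an overdetermined, hence inconsistent, system in $\bm{v}$) and leaves the choice of a pair with $N_\indice{A}>N_\indice{B}$ implicit; your version is, if anything, slightly more self-contained.
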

\begin{proof}
For any $(\bm{z},\bm{\delta})\in\mathbb{R}^{N_\indice{B}}\times\mathbb{R}^{N_\indice{J}}$, we have that

\[M(\bm{u},\bm{\mu})\left(\begin{array}{c}\bm{v}\\ \bm{\eta}\end{array}\right) =
\left(\begin{array}{c}\bm{z}\\ \bm{\delta}\end{array}\right)
\iff
\left\{\begin{array}{rcl}
\matriz{A}\bm{v} + \matriz{S}^T\bm{\eta} & = & \bm{z}, \\
-\gamma\matriz{I}_{\indice{J},\indice{A}} \matriz{I}_{\indice{A},:} \matriz{S} \bm{v} +
\matriz{I}_{\indice{J},\indice{F}} \matriz{I}_{\indice{F},\indice{J}}\eta & = & \bm{\delta}.
\end{array}\right.
\]
From the second equation we have that
\[\gamma \matriz{I}_{\indice{A},:} \matriz{S}\bm{v} = - \matriz{I}_{\indice{A},\indice{J}}\bm{\delta}.\]
If $N_\indice{A} > N_\indice{B}$ we have a linear system with more equations than variables, and hence it will not be consistent for at least one value of $\bm{\delta}$ and $M$ will not have an inverse.
\end{proof}

Instead, we use a Moreau-Yosida penalization of $(P^S_h)$. Again we will write only the case of unilateral upper constraint. For some shift function $\mu^*\in Y_h$ such that $\mu^*(x_j)=0$ if $j\not\in\indice{J}$, and a parameter $\gamma>0$, a direct discretization of $(Q^\gamma)$ could be
\[(\tilde Q^\gamma_h)\min_{u_h\in U_h}J_h(u_h)+\frac{1}{2\gamma}\int_{\bar\omega}
\max\{0,\mu^*+\gamma(S_hu_h-b)\}^2dx.\]
Problem $(\tilde Q^\gamma_h)$ has a unique solution $u^\gamma_h\in U_h$ that converges to $u^\gamma$ as $h\to 0$. Since the functional is not of class $C^2$, the problem does not fit exactly into the framework of \cite{Casas-Raymond2006,Deckelnick-Gunther-Hinze2009,Mateos-Neitzel2015}, so we will give a sketch of the proof of this fact.
\begin{lemma}\label{L63}Problem $(\tilde Q^\gamma_h)$ has a unique solution $u^\gamma_h\in U_h$. Moreover, there exist unique $y_h^\gamma\in Y_h$ and $\varphi_h^\gamma\in Y_{h0}$ such that
\begin{subequations}
\begin{align}
a( y^\gamma_h,z_h)&=0\ \forall z_h\in Y_{h0},\  y^\gamma_h =  u^\gamma_{h}\mbox{ on }\Gamma,\label{E5.4A}\\
a(z_h,\varphi^\gamma_h)&=( y^\gamma_h-y_\Omega+
\max\{0,\mu^*+\gamma(y_h^\gamma-b)\},z_h)_\Omega
\ \forall z_h\in Y_{h0},\label{E5.4B}\\
 \nu u^\gamma_h&=\partial_n^h\varphi^\gamma_h\mbox{ on }\Gamma.\label{E5.4C}
\end{align}
\end{subequations}
Finally, $u_h^\gamma\to u^\gamma$ in $L^2(\Gamma)$ and there exists some $\lambda>0$ such that
\[\|u^\gamma-u^\gamma_h\|_{L^2(\Gamma)}\leq C h^\lambda.\]
\end{lemma}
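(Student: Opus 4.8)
The plan is to reuse, with the obvious modifications, the machinery developed for the unpenalized discrete Dirichlet problem in \cite{Casas-Raymond2006} and \cite[Sect.~6.1]{Mateos-Neitzel2015}; the only genuinely new feature is that the penalty term is $C^1$ but not $C^2$, so one must check that its contribution to the error analysis is controlled by the $1$-Lipschitz continuity of $t\mapsto\max\{0,t\}$ together with the standard finite element error estimates for $S$ and $S_h$. Existence, uniqueness and the optimality system come first and are routine: on the finite dimensional space $U_h$ the map $u_h\mapsto\frac12\|S_hu_h-y_\Omega\|_{L^2(\Omega)}^2$ is strictly convex because the boundary trace of $S_hu_h$ equals $u_h$, whence $\matriz{A}$ is positive definite (cf. \eqref{E35}), and the penalty $\frac1{2\gamma}\int_{\bar\omega}\max\{0,\mu^*+\gamma(S_hu_h-b)\}^2\,dx$ is convex and continuous, being the composition of the convex nondecreasing function $t\mapsto\max\{0,t\}^2$ with an affine map. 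Hence $\tilde J_h^\gamma$ is continuous, strictly convex and coercive on $U_h$, which yields a unique minimizer $u_h^\gamma$. Since $t\mapsto\max\{0,t\}^2$ is $C^1$, so is $\tilde J_h^\gamma$, and $u_h^\gamma$ is characterized by
\[
J_h'(u_h^\gamma)v_h+\int_{\bar\omega}\max\{0,\mu^*+\gamma(S_hu_h^\gamma-b)\}\,(S_hv_h)\,dx=0\qquad\forall v_h\in U_h .
\]
Setting $y_h^\gamma=S_hu_h^\gamma$, defining $\varphi_h^\gamma\in Y_{h0}$ by \eqref{E5.4B}, and recalling that $S_h^*$ is the discrete variational normal derivative, this identity is exactly \eqref{E5.4A}--\eqref{E5.4C}.

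For the convergence and the rate I would argue as in the quoted references. Let $j^\gamma$ and $j_h^\gamma$ be the continuous and discrete penalized reduced functionals, so that $(j^\gamma)'(u^\gamma)=0$ and $(j_h^\gamma)'(u_h^\gamma)(w_h)=0$ for all $w_h\in U_h$. The functional $j^\gamma$ is strongly convex in $L^2(\Gamma)$ with modulus $\nu>0$; applying this at $u_h^\gamma$ and $u^\gamma$ and inserting the $L^2(\Gamma)$ projection $\Pi_hu^\gamma\in U_h$ gives
\[
\nu\|u^\gamma-u_h^\gamma\|_{L^2(\Gamma)}^2\le(j^\gamma)'(u_h^\gamma)(u_h^\gamma-u^\gamma)=\big[(j^\gamma)'-(j_h^\gamma)'\big](u_h^\gamma)(u_h^\gamma-\Pi_hu^\gamma)+(j^\gamma)'(u_h^\gamma)(\Pi_hu^\gamma-u^\gamma).
\]
From $\tilde J_h^\gamma(u_h^\gamma)\le\tilde J_h^\gamma(0)\le C$ one gets $\|u_h^\gamma\|_{L^2(\Gamma)}\le C$ uniformly in $h$, hence the penalty density $m_h:=\max\{0,\mu^*+\gamma(S_hu_h^\gamma-b)\}$ is bounded in $L^2(\bar\omega)$ and $(j^\gamma)'(u_h^\gamma)$ is bounded in $L^2(\Gamma)$. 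The first term on the right is estimated by writing $S^*S-S_h^*S_h=S^*(S-S_h)+(S^*-S_h^*)S_h$, by the elementary bound
\[
\big\|\max\{0,\mu^*+\gamma(Su-b)\}-\max\{0,\mu^*+\gamma(S_hu-b)\}\big\|_{L^2(\bar\omega)}\le\gamma\,\|(S-S_h)u\|_{L^2(\Omega)},
\]
and by the known finite element error estimates $\|(S-S_h)v\|_{L^2(\Omega)}+\|(S^*-S_h^*)z\|_{L^2(\Gamma)}\le Ch^{\lambda_1}$ of \cite{Berggren2004,Casas-Raymond2006,Mateos-Neitzel2015} for the relevant (uniformly bounded) arguments; combined with $\|u_h^\gamma-\Pi_hu^\gamma\|_{L^2(\Gamma)}\le\|u^\gamma-u_h^\gamma\|_{L^2(\Gamma)}+Ch^{1/2}\|u^\gamma\|_{H^{1/2}(\Gamma)}$ this yields a bound $Ch^{\lambda_1}(\|u^\gamma-u_h^\gamma\|_{L^2(\Gamma)}+h^{1/2})$. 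For the second term one uses $(j^\gamma)'(u^\gamma)=0$ and the Lipschitz continuity of $(j^\gamma)'$ to rewrite it as $[(j^\gamma)'(u_h^\gamma)-(j^\gamma)'(u^\gamma)](\Pi_hu^\gamma-u^\gamma)$, bounded by $C\|u^\gamma-u_h^\gamma\|_{L^2(\Gamma)}\,h^{1/2}$. Young's inequality then absorbs every occurrence of $\|u^\gamma-u_h^\gamma\|_{L^2(\Gamma)}$ on the right and gives $\|u^\gamma-u_h^\gamma\|_{L^2(\Gamma)}\le Ch^{\lambda}$ for a suitable $\lambda>0$, which in particular proves the strong convergence $u_h^\gamma\to u^\gamma$.

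The main obstacle is exactly this last step: one has to verify that the non-$C^2$ penalty does not destroy the absorption/fixed-point structure of the Casas--Raymond argument. This reduces to two routine points: the uniform-in-$h$ bound on $\|u_h^\gamma\|_{L^2(\Gamma)}$ (hence on $m_h$ in $L^2(\bar\omega)$ and on $(j^\gamma)'(u_h^\gamma)$ in $L^2(\Gamma)$) — and, should a definite convergence rate for $\|(S-S_h)u_h^\gamma\|$ require more than $L^2(\Gamma)$ regularity, one first bootstraps a uniform bound on $u_h^\gamma$ in a finer space through the discrete adjoint equation \eqref{E5.4B}--\eqref{E5.4C}, whose right-hand side is bounded in $L^q(\Omega)$ for some $q>2$ uniformly in $h$; and the Lipschitz estimate for the difference of the two $\max$-terms, which turns the penalty's contribution into one more state-error term of order $h^{\lambda_1}$. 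Because both points are routine once isolated, only a sketch is presented here.
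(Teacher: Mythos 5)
Your proposal is correct, and it rests on the same two pillars as the paper's own proof: the strong monotonicity (modulus $\nu$) of the derivative of the continuous penalized reduced functional, and the $1$-Lipschitz continuity of $t\mapsto\max\{0,t\}$, which turns the penalty's contribution into one more state-approximation error. The existence/uniqueness and optimality-system part is the same. Where you diverge is in how the consistency error is decomposed. The paper exploits that in the unconstrained setting both optimality conditions are pointwise identities on $\Gamma$, namely $\nu u^\gamma=\partial_n\varphi^\gamma$ and $\nu u_h^\gamma=\partial_n^h\varphi_h^\gamma$, so after the monotonicity step the entire right-hand side collapses to the single term $(\partial_n\varphi_{u_h^\gamma}-\partial_n^h\varphi_h^\gamma,\,u^\gamma-u_h^\gamma)_\Gamma$, i.e.\ the gap between the exact and the variational discrete normal derivatives of the adjoint states attached to the \emph{same} control $u_h^\gamma$; this is estimated in one stroke by \cite[Theorem 5.7]{Casas-Raymond2006} together with the Lipschitz bound on the $\max$ terms, and no absorption is needed --- one just divides by $\|u^\gamma-u_h^\gamma\|_{L^2(\Gamma)}$. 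You instead insert $\Pi_hu^\gamma$ and split into a $[(j^\gamma)'-(j_h^\gamma)']$ consistency term plus a projection-error term, which costs an extra $h^{1/2}$ from $\|u^\gamma-\Pi_hu^\gamma\|_{L^2(\Gamma)}$ and a Young absorption step, but has the advantage that it carries over essentially verbatim to the control-constrained penalized problem $(Q^{C,\gamma}_h)$ of Section~\ref{S6}, where the pointwise identity \eqref{E5.4C} becomes a variational inequality and the paper's shortcut is unavailable. Both routes yield an unquantified rate $\lambda>0$, which is all the lemma asserts, and your uniform bound on $u_h^\gamma$ obtained from $\tilde J_h^\gamma(u_h^\gamma)\le\tilde J_h^\gamma(0)$ matches the paper's.
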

\begin{proof}Existence and uniqueness of solution follows immediately from the coerciveness of the discrete penalized functional. Since this discrete functional is also of class $C^1$, first order optimality conditions are standard.

Consider two controls $u_1,u_2\in L^2(\Gamma)$ and its related states $y_i$ and adjoint states $\varphi_i$ w.r.t. problem $(Q^\gamma)$. Using the definition of solution in the transposition sense for the states and both the state and adjoint state equations \eqref{E4.2a} and \eqref{E4.2b} we have (shorting $z_i = \mu^*+\gamma(y_i-b)$, $i=1,2$)
\begin{eqnarray*}
(-\partial_n\varphi_1 + \partial_n\varphi_2,u_1-u_2)_\Gamma  &=&
(y_1-y_2,-\Delta\varphi_1+\Delta\varphi_2)_\Omega \\
 & = & (y_1-y_2, y_1+\chi_\omega\max\{0,z_1\}-y_2-\chi_\omega\max\{0,z_2\})_\Omega\\
 &=&\|y_1-y_2\|^2_{L^2(\Omega)}+\frac{1}{\gamma}(z_1 - z_2,\max\{0,z_1\}-\max\{0,z_2\})_\omega\\
 & \geq& 0.
\end{eqnarray*}
The rest is quite standard:
\begin{eqnarray*}
\nu\|u^\gamma- u^\gamma_h\|_{L^2(\refre{\Gamma})}^2&\leq&
(-\partial_n\varphi^\gamma+\nu u^\gamma+\partial_n\varphi_{u_h^\gamma}- \nu u_h^\gamma, u^\gamma-u_h^\gamma)_\Gamma\\
& = & (-\partial_n\varphi^\gamma+\partial_n^h\varphi_h^\gamma+\nu u^\gamma-\nu u_h^\gamma, u^\gamma-  u_h^\gamma)_\Gamma \\ && +(-\partial_n^h\varphi_h^\gamma+\partial_n\varphi_{u_h^\gamma},u^\gamma- u_h^\gamma)_\Gamma.
\end{eqnarray*}
For fixed $\gamma$, $u_h^\gamma$ is uniformly bounded in $L^2(\Gamma)$ because $\| u_h^\gamma\|_{L^2(\Gamma)}^2\leq 1/\nu\|y_\Omega\|_{L^2(\Omega)}^2 + 1/\gamma/\nu\|\max\{0,\mu^*-\gamma b\}\|_{L^2(\Omega)}^2$, and therefore using \cite[Theorem 5.7]{Casas-Raymond2006} we have that $\|-\partial_n^h\varphi_h^\gamma+\partial_n\varphi_{u_h^\gamma}\|\leq C h^\lambda$ for some $\lambda>0$. For the first addend we obtain directly $0$ using
first order optimality conditions \eqref{E4.2c} and \eqref{E5.4C}.
\end{proof}

A practical way of computing the integral in the penalty term is to use the lumped mass matrix $\matriz{L}\in\mathbb{R}^{N\times N}$. Since we are going to use it only for integrals in $\bar\omega$, we define it as
\[\matriz{L}_{i,j} = 0\mbox{ if }i\neq j,\ \matriz{L}_{j,j} = 0\mbox{ if }j\not\in\indice{J},\ \matriz{L}_{j,j}= \sum_{k=1}^N \matriz{M}_{k,j}.\mbox{ if }j\in\indice{J}.\]
Therefore, we will be solving
\[(Q^\gamma_h)\min_{u_h\in U_h}J_h(u_h)+\frac{1}{2\gamma}\sum_{j\in\indice{J}}\matriz{L}_{j,j}
\max\{0,\mu^*(x_j)+\gamma(S_hu_h(x_j)-b(x_j))\}^2.\]
 First order optimality conditions for this problem read like
\begin{subequations}
\begin{align}
a( y^\gamma_h,z_h)&=0\ \forall z_h\in Y_{h0},\  y^\gamma_h =  u^\gamma_{h}\mbox{ on }\Gamma,\label{E5.4a}\\
a( z_h,\varphi^\gamma_h)&=( y^\gamma_h-y_\Omega,z_h)_\Omega\notag\\
&\quad +\sum_{j\in\indice{J}}\matriz{L}_{j,j}
\max\{0,\mu^*(x_j)+\gamma(y_h^\gamma(x_j)-b(x_j))\}z_h(x_j)
\ \forall z_h\in Y_{h0},\label{E5.4b}\\
 \nu u^\gamma_h&=\partial_n^h\varphi^\gamma_h\mbox{ on }\Gamma.\label{E5.4c}
\end{align}
\end{subequations}
This nonlinear system can be solved using a semismooth Newton method.
To this end, we will use the nonlinear operator $G_h:\mathbb{R}^{N_\indice{B}}\to \mathbb{R}^{N_\indice{B}}$ defined by
\[G_h(\bm{u}) =
\matriz{A}\bm{u}-\bm{f}+
\matriz{S}^T\matriz{L} \max\{\bm{0},\bm{\mu}^*+\gamma(\matriz{S}\bm{u}-\bm{b})\}.\]
Thus, solving \eqref{E5.4a}--\eqref{E5.4c} is equivalent to solving $G_h(\bm{u})=\bm{0}$.

For every $u_h\in U_h$
we define sets of active and free nodes as
\begin{equation}\label{E5.5}
\indice{A}_\omega(u_h,\gamma,h) = \{j\in \indice{J}: \mu^*_j+\gamma(S_hu_h(x_j)-b_j) > 0\},\ \indice{F}_\omega(u_h,\gamma,h) =\indice{I}\setminus \indice{A}_\omega(u_h,\gamma,h).\end{equation}
Abusing notation we will drop some or all of the arguments or will use the vector notation when this does not lead to confusion, e.g. $\indice{A}_\omega$, $\indice{A}_\omega(\bm{u})$.
Notice that if $\mu^*\equiv 0$, then $\indice{A}_\omega(u_h,\gamma,h)$ is independent of $\gamma$.
For the sake of notation, it is also convenient to define for every set $\indice{A}_\omega\subseteq \indice{J}$ the diagonal matrix $\matriz{H}(\indice{A}_\omega)\in\mathbb{R}^{N\times N}$ such that
\[\matriz{H}_{i,j}=\delta_{i,j}\left\{\begin{array}{cc}
0 & \mbox{ if }j\not\in \indice{A}_\omega\\
\matriz{L}_{j,j} & \mbox{ if }j\in \indice{A}_\omega.
\end{array}\right.
\]
Abusing notation, we will often write $\matriz{H}(\bm{u})=\matriz{H}(\indice{A}_\omega(\bm{u}))$ or even we will write $\matriz{H}$ when this does not lead to confusion. The proof of the following result is as the corresponding one in infinite dimension.
\begin{theorem}\label{T64}$G_h(\bm{u})$ is slantly differentiable, a slant differential is given by
\[M(\bm{u})\bm{v} = \matriz{A}\bm{v}+\gamma\matriz{S}^T\matriz{H}(\bm{u}) \matriz{S}\bm{v}\]
and it has a uniformly bounded inverse w.r.t. $\bm{u}$.
\end{theorem}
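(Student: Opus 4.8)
The plan is to mimic the argument of Lemma \ref{L4.4}, working from the representation $G_h(\bm{u}) = \matriz{A}\bm{u}-\bm{f}+\matriz{S}^T\matriz{L}\max\{\bm{0},\bm{\mu}^*+\gamma(\matriz{S}\bm{u}-\bm{b})\}$. The maps $\bm{u}\mapsto\matriz{A}\bm{u}-\bm{f}$ and $\bm{u}\mapsto\bm{\mu}^*+\gamma(\matriz{S}\bm{u}-\bm{b})$ are affine, hence smooth, while $\max\{\bm{0},\cdot\}:\mathbb{R}^{N}\to\mathbb{R}^{N}$ is slantly differentiable (indeed semismooth) in finite dimension by Rademacher's theorem, exactly as recalled in the proof of Lemma \ref{L4.4} (cf. \cite[Example 2.4]{Hinze_Pinnau_Ulbrich_Ulbrich2009}), a componentwise slanting function at a vector with active set $\indice{A}_\omega$ being the diagonal $0$--$1$ projector onto the coordinates in $\indice{A}_\omega$. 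Applying the chain rule for slant derivatives \cite[Theorem 2.10(c)]{Hinze_Pinnau_Ulbrich_Ulbrich2009}, and using that $\matriz{L}$ is diagonal so that its composition with this projector (for the active set $\indice{A}_\omega(\bm{u})$ of \eqref{E5.5}) is precisely $\matriz{H}(\bm{u})$, yields $M(\bm{u})\bm{v}=\matriz{A}\bm{v}+\gamma\matriz{S}^T\matriz{H}(\bm{u})\matriz{S}\bm{v}$.

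For the uniform invertibility I would argue algebraically, this being the matrix counterpart of the testing argument used in Theorem \ref{T51}. Since the lumped-mass weights $\matriz{L}_{j,j}$, $j\in\indice{J}$, are positive, $\matriz{H}(\bm{u})$ is symmetric positive semidefinite, and hence so is $\gamma\matriz{S}^T\matriz{H}(\bm{u})\matriz{S}$; indeed $\bm{w}^T\matriz{S}^T\matriz{H}(\bm{u})\matriz{S}\bm{w}=\sum_{j\in\indice{A}_\omega(\bm{u})}\matriz{L}_{j,j}\big((\matriz{S}\bm{w})_j\big)^2\geq 0$. Combining this with the coercivity estimate already established in \eqref{E35}, namely that $\matriz{A}$ is symmetric positive definite with $\bm{w}^T\matriz{A}\bm{w}\geq(\lambda_1(\matriz{M})+\nu\lambda_1(\matriz{B}_{\indice{B},\indice{B}}))\|\bm{w}\|^2$, one obtains that $M(\bm{u})$ is symmetric positive definite and satisfies $\bm{w}^TM(\bm{u})\bm{w}\geq(\lambda_1(\matriz{M})+\nu\lambda_1(\matriz{B}_{\indice{B},\indice{B}}))\|\bm{w}\|^2$ for every $\bm{u}$. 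Consequently $M(\bm{u})$ is invertible with $\|M(\bm{u})^{-1}\|\leq(\lambda_1(\matriz{M})+\nu\lambda_1(\matriz{B}_{\indice{B},\indice{B}}))^{-1}$, a bound that may depend on $h$, $\nu$ and $\gamma$ but not on $\bm{u}$, which is the asserted uniform boundedness. (Equivalently, one may read $M(\bm{u})\bm{v}=\bm{z}$ as a discrete version of the PDE system in the proof of Theorem \ref{T51} via the identity $\matriz{A}=\matriz{S}^T\matriz{M}\matriz{S}+\nu\matriz{B}_{\indice{B},\indice{B}}$ and $\bm{w}^T\matriz{A}\bm{w}=\|S_h w_h\|_{L^2(\Omega)}^2+\nu\|w_h\|_{L^2(\Gamma)}^2$, and test with $\bm{v}$.)

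I do not expect a genuine obstacle here; the only points requiring a little care are the sign check on the lumped-mass weights, which is what guarantees that $\gamma\matriz{S}^T\matriz{H}(\bm{u})\matriz{S}$ is positive semidefinite and hence that the lower eigenvalue bound for $\matriz{A}$ is inherited by $M(\bm{u})$ uniformly in $\bm{u}$, and the bookkeeping ensuring that the diagonal matrix produced by the chain rule is $\matriz{H}$ evaluated at the active set $\indice{A}_\omega(\bm{u})$ of \eqref{E5.5}. Once these are in place the statement follows by simply assembling the chain rule with the coercivity estimate \eqref{E35}, as announced by the remark preceding the theorem.
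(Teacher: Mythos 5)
Your proposal is correct. For the slant differentiability part you argue exactly as the paper does for Lemma \ref{L4.4} (finite-dimensional semismoothness of $\max\{\bm{0},\cdot\}$ via Rademacher, then the chain rule, with the bookkeeping $\matriz{L}\cdot(\text{0--1 projector onto }\indice{A}_\omega(\bm{u}))=\matriz{H}(\bm{u})$), which is also what the paper intends here. For the inverse bound the paper simply says the proof is ``as the corresponding one in infinite dimension,'' i.e.\ it defers to the transposition/testing identity in Theorem \ref{T51}, which as written divides by $\nu\|v\|_{L^2(\Gamma)}$ and therefore needs $\nu>0$. Your algebraic route is a genuine (if small) improvement in the discrete setting: writing $M(\bm{u})=\matriz{A}+\gamma\matriz{S}^T\matriz{H}(\bm{u})\matriz{S}$ as a sum of a symmetric positive definite matrix and a symmetric positive semidefinite one, and invoking the coercivity \eqref{E35} (which rests on $\|\matriz{S}\bm{w}\|_{\mathbb{R}^N}\geq\|\bm{w}\|_{\mathbb{R}^{N_\indice{B}}}$ and $\lambda_1(\matriz{M})>0$), gives $\|M(\bm{u})^{-1}\|\leq(\lambda_1(\matriz{M})+\nu\lambda_1(\matriz{B}_{\indice{B},\indice{B}}))^{-1}$ uniformly in $\bm{u}$ and valid for all $\nu\geq0$, consistent with the analogous claim in Lemma \ref{L4.4}. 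The two arguments are of course the same computation in different clothing (your parenthetical remark makes this explicit), but yours avoids importing the $\nu>0$ restriction from the continuous case and makes the $h$-, $\nu$-, $\gamma$-dependence of the constant transparent.
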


Notice that using this $\matriz{H}$ notation, we can write
\[G_h(\bm{u}) =
\matriz{A}\bm{u}-\bm{f}+
\matriz{S}^T\matriz{H}(\bm{u})\big(\bm{\mu}^*+\gamma(\matriz{S}\bm{u}-\bm{b})\big),\]
and therefore, for a given $\bm{u}$, and denoting $\matriz{H}=\matriz{H}(\bm{u})$, one Newton step reads like
\begin{equation}\label{E2500}\matriz{A}\bm{u}^+ +\gamma\matriz{S}^T\matriz{H} \matriz{S}\bm{u}^+ = \bm{f}+
\matriz{S}^T\matriz{H}(\gamma\bm{b}-\bm{\mu}^*).
\end{equation}
Let us comment that for the computation of $\bm{w} = \matriz{S}^T\matriz{H}\bm{y}$ for some $\bm{y}\in \mathbb{R}^N$, first we solve
\begin{equation}\label{E2300a}\matriz{K}_{\indice{I},\indice{I}}\bm{\phi}_\mathrm{s} = \matriz{H}_{\indice{I},:}\bm{y},\end{equation}
and, next, we have
\[\bm{w} = \matriz{H}_{\indice{B},:}\bm{y}-\matriz{K}_{\indice{B},\indice{I}}\bm{\phi}_\mathrm{s} = -\matriz{K}_{\indice{B},\indice{I}}\bm{\phi}_\mathrm{s}\]
because $\matriz{H}$ is diagonal and its nonzero components  correspond to nodes that lie in $\bar\omega$, and are hence interior to $\Omega$.

Again a preconditioned conjugate gradient method can be used to solve this system, provided an efficient way of computing $\bm{d}=(\matriz{A} + \gamma\matriz{S}^T\matriz{H} \matriz{S}) \bm{v}$ and the second member of the system. Notice that in each of the algorithms \ref{A400b} and \ref{A500b2} the computation of $\bm{\phi}$ can be done with just one system solve.

\begin{algorithm}[H]
\caption{Function $\bm{d}=(\matriz{A} + \gamma\matriz{S}^T\matriz{H} \matriz{S}) \bm{v}$\label{A400b}}
\DontPrintSemicolon
solve \eqref{E200} for $\bm{y}$\;
solve \eqref{E300} for $\bm{\phi}_\mathrm{r}$\;
solve \eqref{E2300a} for $\bm{\phi}_\mathrm{s}$\;
set $\bm{\phi}=\bm{\phi}_\mathrm{r} + \gamma \bm{\phi}_\mathrm{s}$\;
set $\bm{d} = \matriz{M}_{\indice{B},:}\bm{y}  -
\matriz{K}_{\indice{B},\indice{I}}\bm\phi+\nu\matriz{B}_{\indice{B},\indice{B}}\bm{u}$\;
\end{algorithm}

\begin{algorithm}[H]
\caption{Computation of $\bm{c} = \bm{f}+
\matriz{S}^T\matriz{H}(\bm{u})(\gamma\bm{b}-\bm{\mu}^*)$\label{A500b2}}
\DontPrintSemicolon
set $\bm{y}=\bm{y}_\Omega$;
solve \eqref{E300} for $\bm{\phi}_\mathrm{r}$\;
set $\bm{y} = \gamma\bm{b}-\bm{\mu}^*$;
solve \eqref{E2300a} for $\bm{\phi}_\mathrm{s}$\;
set $\bm{\phi}=\bm{\phi}_\mathrm{r} + \bm{\phi}_\mathrm{s}$\;
set $\bm{c} = \matriz{M}_{\indice{B},:}\bm{y}_\Omega-
\matriz{K}_{\indice{B},\indice{I}}\bm\phi$\;
\end{algorithm}

Alternatively, the solution to \eqref{E2500} can be obtained solving

\begin{equation}
\begin{array}{rcl}
\left[\begin{array}{cc}
\matriz{M}+\nu\matriz{B}+\gamma\matriz{H} & -\matriz{K}_{:,\indice{I}}  \\
-\matriz{K}_{\indice{I},:} & \matriz{O}_{\indice{I},\indice{I}}
\end{array}\right]
\left[\begin{array}{c}
\bm{y}^+ \\
\bm{\varphi}_\indice{I}^+ \end{array}
\right]
&=&
\left[\begin{array}{c}
\matriz{M}\bm{y}_\Omega+\matriz{H}(\gamma\bm{b}-\bm{\mu}^*))\\
\bm{0}
\end{array}
\right]\\
\bm{u}^+&=&\bm{y}^+_\indice{B}.
\end{array}
\label{E2200}
\end{equation}


Finally, the semismooth Newton algorithm to solve our problem reads as:

\LinesNumbered
\begin{algorithm}[H]
\caption{Semismooth Newton method for $(Q^\gamma_h)$\label{A600}}
\DontPrintSemicolon
Set $k=0$ and initialize $\bm{u}_0$; compute $\indice{A}_{\omega,0} = \indice{A}_\omega(\bm{u}_{0})$\;
Set $\indice{A}_\omega = \indice{A}_{\omega,k}$ and $\matriz{H} = \matriz{H}(\indice{A}_\omega )$\; \label{L2b}
Compute $\bm{u}_{k+1} = \bm{u}^+$  using \eqref{E2500} [or \eqref{E2200}]\;\label{A600L3}
Set $\indice{A}_{\omega,k+1} = \indice{A}_\omega(\bm{u}_{k+1})$\;
Stop or set $k= k+1$ and return to \ref{L2b}\;
\end{algorithm}

\begin{remark}As we noticed in Remark \ref{R45},  the only information from iteration $k$ used to compute iteration $k+1$ is the set of active indexes. Therefore, when $\indice{A}_\omega(\bm{u}_{k+1})=\indice{A}_\omega(\bm{u}_{k})$ we have reached an stationary point. This is usually the criterion used to stop the semismooth Newton method.

Nevertheless, as we will see below
 in the context of algorithms \ref{A700} and \ref{A1000}, the main use for the solution of $(Q_h^{\gamma})$ will be to provide an initial guess for the next step in those procedures, so it does not seem necessary to solve exactly $(Q_h^{\gamma})$ in general. Following \cite{Hintermuller-Kunisch2006}, we may implement the following stopping criterion for Algorithm \ref{A600}. After step \ref{A600L3}, we compute an approximation of the multiplier $\bm{\mu}_{k+1}\in \mathbb{R}^N$
as
\[\mu_{k+1,j} = 0\mbox{ if }j\not\in \indice{A}_{\omega,k},\ \mu_{k+1,j} = \mu^*_j+\gamma(y_{k+1,j}-b_j)\mbox{ if }j\in \indice{A}_{\omega,k}\]
and we may stop the algorithm if
\[ \|\mu_{k+1}-\max(\mu^*+\gamma(y_{k+1}-b),0\|_{L^2(\Omega)}< \varepsilon_\lambda.\]
In practice, this quantity gives a good measure of the change in the active set between iterates as well as the unfeasibility combined with the penalization parameter $\gamma$.
\end{remark}

We state the convergence result for the Newton method. It follows directly from Theorem \ref{T64}
\begin{theorem}The semismooth Newton method to solve $(Q^\gamma_h)$ described in Algorithm \ref{A600} converges locally q-superlinearly.\end{theorem}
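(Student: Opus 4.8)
The plan is to reduce the statement to the abstract local convergence theorem for semismooth Newton methods, exactly as was done for Theorem \ref{T4.2} and for the control-constrained discrete case. The essential content has already been discharged in Theorem \ref{T64}: the operator $G_h:\mathbb{R}^{N_\indice{B}}\to\mathbb{R}^{N_\indice{B}}$, $G_h(\bm{u})=\matriz{A}\bm{u}-\bm{f}+\matriz{S}^T\matriz{H}(\bm{u})(\bm{\mu}^*+\gamma(\matriz{S}\bm{u}-\bm{b}))$, is slantly differentiable with slant differential $M(\bm{u})\bm{v}=\matriz{A}\bm{v}+\gamma\matriz{S}^T\matriz{H}(\bm{u})\matriz{S}\bm{v}$, and $M(\bm{u})^{-1}$ exists and is bounded uniformly in $\bm{u}$. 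Moreover, since $(Q^\gamma_h)$ is a strictly convex, coercive, unconstrained problem, $G_h(\bm{u})=\bm{0}$ has a unique solution $\bm{u}^\gamma_h$ (the solution of $(Q^\gamma_h)$), so the abstract hypotheses concerning the root are met.

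The one point that needs a short verification is that the update performed in Algorithm \ref{A600} is precisely the abstract semismooth Newton step $M(\bm{u}_k)(\bm{u}_{k+1}-\bm{u}_k)=-G_h(\bm{u}_k)$. Writing $\matriz{H}=\matriz{H}(\bm{u}_k)$ and using the displayed form of $G_h$, one computes $M(\bm{u}_k)\bm{u}_k-G_h(\bm{u}_k)=\bm{f}+\matriz{S}^T\matriz{H}(\gamma\bm{b}-\bm{\mu}^*)$, so the Newton step becomes $\matriz{A}\bm{u}_{k+1}+\gamma\matriz{S}^T\matriz{H}\matriz{S}\bm{u}_{k+1}=\bm{f}+\matriz{S}^T\matriz{H}(\gamma\bm{b}-\bm{\mu}^*)$, which is exactly \eqref{E2500} (with $\bm{u}^+=\bm{u}_{k+1}$); the equivalent saddle-point form \eqref{E2200} follows by the usual elimination of the interior adjoint variable, as in \eqref{E200}--\eqref{E400}. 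Recalling that the active set $\indice{A}_\omega(\bm{u}_k)$, through $\matriz{H}(\bm{u}_k)$, is the only data carried between iterations, this identifies the iterates generated by Algorithm \ref{A600} with those of the abstract method.

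With this identification in hand, I would simply invoke the standard local $q$-superlinear convergence theorem for semismooth (slantly differentiable) Newton iterations — \cite[Theorem 3.4]{Chen-Nashed-Qi2000}, equivalently \cite[Theorem 1.1]{Hintermuller-Ito-Kunisch2003} or \cite[Theorem 2.12]{Hinze_Pinnau_Ulbrich_Ulbrich2009} — whose hypotheses (slant differentiability of $G_h$ and uniform invertibility of the slant differentials near $\bm{u}^\gamma_h$) are guaranteed by Theorem \ref{T64}. This gives that, for $\bm{u}_0$ sufficiently close to $\bm{u}^\gamma_h$, the sequence $\bm{u}_k$ produced by Algorithm \ref{A600} converges $q$-superlinearly to $\bm{u}^\gamma_h$. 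There is no genuine obstacle here beyond Theorem \ref{T64}: the only care needed is the bookkeeping identification of \eqref{E2500}/\eqref{E2200} with the abstract Newton update, together with the (standard, finite-dimensional) remark that slant differentiability suffices to upgrade linear to superlinear convergence.
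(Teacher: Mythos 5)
Your proposal is correct and follows essentially the same route as the paper, which simply states that the result "follows directly from Theorem \ref{T64}": you invoke the same abstract local convergence theorem (\cite[Theorem 3.4]{Chen-Nashed-Qi2000} and its equivalents) with the hypotheses supplied by Theorem \ref{T64}. The extra bookkeeping you carry out — checking that \eqref{E2500} is exactly the Newton update $M(\bm{u}_k)(\bm{u}_{k+1}-\bm{u}_k)=-G_h(\bm{u}_k)$ — is left implicit in the paper (it appears in the derivation of \eqref{E2500} preceding the theorem), and your computation of $M(\bm{u}_k)\bm{u}_k-G_h(\bm{u}_k)$ verifying this is accurate.
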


In all the examples below we have taken the shift $\mu^*=0$.

\begin{example}\label{Ex5.6}In general it is not a good idea to solve directly $(Q_h^\gamma)$ for some  $\gamma$ big enough. The resulting intermediate problems are usually very ill conditioned.

We resume the 2D problem taken from \cite{Mateos-Neitzel2015} and described in Example \ref{Ex31}. As in \cite{Mateos-Neitzel2015}, we take $\omega$ the ball centered at $(-0.1,-0.1)$ with radius $0.2$ and $b\equiv 0.15$. We use Algorithm \ref{A600} taking $\bm{u}_0=\bm{0}$. The results are summarized in Table \ref{T5.5}. We measure the unfeasibility of the state as the maximum constraint violation $\mathrm{mcv}(y_h) = \|\max(y_h-b,0)\|_{L^\infty(\bar\omega)}$.
\begin{table}[h!]
\tbl{Direct application of Algorithm \ref{A600} for $\gamma=10^{9}$}
{
\begin{tabular}{cr|c|c|c|c|c}\toprule
&$h$&$\gamma$ & $\mathrm{mcv}\,y_h^\gamma$ & Newton&pcg & $J_h(u_h^\gamma)$ \\ \midrule
&$0.2\times 2^{-4}$ & 1E+09 & 6.0E-07 & 19 &  470 & 0.3552476482\\
&$0.2\times 2^{-5}$ & 1E+09 & 1.1E-06 & 19 &  468 & 0.3552689635\\
&$0.2\times 2^{-6}$ & 1E+09 & 1.6E-06 & 21 &  467 & 0.3552757641\\
&$0.2\times 2^{-7}$ & 1E+09 & 2.5E-06 & 21 &  459 & 0.3552778183\\
&$0.2\times 2^{-8}$ & 1E+09 & 2.7E-06 & 21 &  451 & 0.3552784696\\
\bottomrule
\end{tabular}
}
\label{T5.5}
\end{table}

The first thing that can be observed is that $\gamma$ cannot be too big w.r.t. the problem size. Although the algorithm should converge in finite time, a value too big for $\gamma$ will make the active sets of the intermediate steps fluctuate, and in practice it may not stop. This is what happens in this case with meshes coarser than the first one exposed in the table. On the other hand, it can also be noticed that the amount of Newton iterations looks mesh independent for $h$ small enough. This was to be expected because the infinite dimensional version of the method is convergent (cf. Theorem \ref{T51}).

Nevertheless, the computational effort can be better measured by the total number of conjugate gradient iterations made when we solve \eqref{E2500} in step \ref{A600L3} of Algorithm \ref{A600}. Solving the solution in the finest mesh takes 8246 seconds.

\end{example}

In \cite{Hintermuller-Kunisch2006} a continuation strategy is proposed. This reduces considerably the computational effort. We will use subscripts for the iteration number of the semismooth Newton method described in Algorithm \ref{A600} and superscripts for the numbering of the iterates of the continuation strategy provided in Algorithm \ref{A700}. For some sequence $\{\tau^{n}\}_{n\geq 1}$ such that $\tau^n>1$ for all $n\geq 1$, we have

\LinesNumbered
\begin{algorithm}[H]
\caption{Continuation strategy to solve $(Q^\gamma_h)$\label{A700}}
\DontPrintSemicolon
Set $n=0$ and initialize $\gamma^0$ and $\bm{u}^0$\; 
Compute $\bm{u}^{n+1}$, the solution of $(Q^{\gamma^n}_h)$ using Algorithm \ref{A600} with seed $\bm{u}^n$\;\label{A700L2}
Set $\gamma^{n+1} = \tau^{n+1}\gamma^n$\;
Stop or set $n = n+1$ and return to \ref{A700L2}\;
\end{algorithm}

There are still three important details to be explained about Algorithm \ref{A700}: first and most important, the choice of the initial guess $\bm{u}^0$; next, the appropriate values for $\tau^n$; and finally, a suitable stopping criterion.

To obtain a good initial guess, we will solve the problem in a coarser mesh with a smaller value of $\gamma$. Since we are going to deal with meshes of different sizes, we will use the notation $\indice{A}_\omega({u}_h,\gamma,h)$ when needed. We also recall that $I_h$ is the pointwise interpolation operator from $C(\bar\Gamma)$ onto $U_h$. Given $(h,\gamma^0)$, we fix $M>0$ and pick a finite sequence $(h_{n},\gamma_{n})_{n=0}^{M}$ such that $h_{n}$ is decreasing, $h_M = h$, $\gamma_{n}$ is increasing and $\gamma_{M} \leq \gamma^0$.

\begin{algorithm}[H]
\caption{Nested concept to choose $\bm{u}^0$ in Algorithm \ref{A700}\label{A1000}}
\DontPrintSemicolon
Set $n = 0$ and initialize  ${u}_{h_{0}}\in U_{h_{0}}$\;
Solve $(Q_{h_{n}}^{\gamma_{n}})$ using Algorithm \ref{A600} with initial guess ${u}_{h_n}$\;\label{L2A1000}
Set $ u_{h_{n+1}} = I_{h_{n+1}} u_{h_{n}}^{\gamma_{n}}\in U_{h_{n+1}}$\;\label{L3A1000}
Set $n=n+1$\;
If $n < M$ goto \ref{L2A1000}\;
Return $u_{h_{M}}\in U_h$
\end{algorithm}
Although Algorithm \ref{A1000} is meaningful for non-nested meshes, the computational effort needed to make the interpolation in step \ref{L3A1000} in non-nested meshes can be considerable, specially for 3D problems.

In \cite{Hintermuller-Kunisch2006} some criteria are given to choose $\tau^{n+1}>1$. In practice, if $\tau^{n+1}$  is very small the algorithm would not advance; if it is very big, we would lose the advantage given by the continuation strategy.

In \cite{Hintermuller-Kunisch2006} the authors stop the continuation strategy if residuals related to the state, the adjoint state and the multiplier are smaller than a certain tolerance of order $O(h^2)$. In our case, since the problem is linear quadratic, the residuals related to the state and the adjoint state are zero (at least up to roundoff error). The residual related to the multiplier can be computed as
\[r_d = \sum_{j\in \indice{A}_\omega^{n+1}} \matriz{L}_{j,j} (y^{n+1}_j-b_j).\]
As an alternative, a tolerance for the maximum constraint violation $e_\infty$ can be used stop if $\mathrm{mcv}(y_h^{\gamma_n})\leq e_{\infty}$.

With all these considerations, we propose the following algorithm. Fix a mesh sequence such that $\{h_j\}_{j=0}^M$ is decreasing, $\gamma_0>0$, $C>0$, $u_{h_0}\in U_{h_0}$, $n_{\max}\in\mathbb{N}$ and a sequence $\{\tau^{n}\}_{n=1}^{n_{\max}+1}$ such that $\tau^n>1$ for all $n$.

\begin{algorithm}[H]
\caption{Solving $(P_h^S)$ \label{A1100}}
\DontPrintSemicolon
Set $j = 0$, $h=h_j$, $u_{h,0} = u_{h_0}$\;
Set $n = 0$\;
Compute $u_h^{\gamma_n}$, the solution of $(Q_{h}^{\gamma_n})$, using Algorithm \ref{A600} with seed $u_{h,n}$\;\label{L3A1100}
\eIf{($r_d<C h^2$ {\bf or} $\mathrm{mcv}(y_h^{\gamma_n})\leq e_{\infty}$) {\bf and} $j<M$}
{
Set $j = j + 1$ and $h = h_{j}$ \tcc*[f]{Refine mesh}\;
Set $u_{h,n+1} = I_{h} u_{h_{j-1}}^{\gamma_{n}}\in U_{h}$ \tcc*[f]{Write last output in the new mesh basis}\;
}
{Set $ u_{h,n+1} = u_h^{\gamma_n}$}
Set $\gamma_{n+1} = \tau^{n+1}\gamma_n$\tcc*[f]{Update $\gamma$}\;
Set $n=n+1$\;
\eIf{( $(r_d<C h^2$ {\bf or} $\mathrm{mcv}(y_h^{\gamma_n})\leq e_{\infty}$) {\bf and} $j==M)$ {\bf or} $n > n_{\max}$}{Stop}{Goto \ref{L3A1100}}
\end{algorithm}

\begin{example}\label{Ex5.7}Now we apply Algorithm \ref{A1100} for $\gamma_0=1$, $\tau^n=10$ and a family of 9 nested meshes of sizes $h_j=0.2*2^{-j}$, $j=0:8$, the coarsest for $h_0$ with 52 nodes and the finest for $h_8$ with  2689537 nodes.

A summary of the results can be found in Table \ref{T5.6}. The total computation time was 1570 seconds, which is a significant improvement compared with the 8246 seconds used by Algorithm \ref{A600} (see Example \ref{Ex5.6}).

\begin{table}[h!]
\tbl{Data for continuation strategy and nested choice of the initial point.}
{
\begin{tabular}{cr|c|c|c|c|l}\toprule
&$j$ &  $\gamma_n$ & $r_d$ & Newton & pcg & $J_h(u_h^\gamma)$ \\ \midrule
& 0 &   1E+00 & 1.20E-02 & 2 &  12 & 0.3434276446\\
& 1 &    1E+01 & 7.24E-03 & 1 &  7 & 0.3464821594\\
& 1 &    1E+02 & 1.96E-03 & 1 &  7 & 0.3512477862\\
& 2 &     1E+03 & 2.35E-04 & 3 &  25 & 0.3544114381\\
& 3 &    1E+04 & 2.40E-05 & 4 &  37 & 0.3550780145\\
& 4 &    1E+05 & 2.39E-06 & 4 &  44 & 0.3552160750\\
& 5 &    1E+06 & 2.38E-07 & 5 &  59 & 0.3552595829\\
&6 & 1E+07 & 2.37E-08 & 5 &  63 & 0.3552727851\\
& 7 &   1E+08 & 2.37E-09 & 5 &  54 & 0.3552770205\\
& 8 & 1E+09 & 2.37E-10 & 5 &  50 & 0.3552784696\\
\bottomrule
\end{tabular}
}
\label{T5.6}
\end{table}
\end{example}
\textbf{Determination of (the support of) the Lagrange multipliers of $(P^S)$.} Comparison of the adjoint state equations for $(P_h^S)$ and $(Q_h^\gamma)$ leads to the approximation formula
\[\bar\mu_h^+ \approx \sum_{j\in\indice{A}_\omega(u_h^\gamma)}\matriz{L}_{j,j} (\mu^*_j +\gamma(y^\gamma_h(x_j)-b_j))\delta_{x_j}.\]

Since $\bar\omega\subset\Omega$, it is very common that the Lagrange multipliers of the original problem are finite sums of Dirac measures centered at points on the boundary of $\omega$, say $\bar\mu^+ = \sum_{k=1}^n \mu_k\delta_{X_k}$ for some $n>0$ and $X_k\in\partial\omega$.

Increasing $\gamma$ will have as a result a better approximation of the Lagrange multiplier and its support. Notice, nevertheless, this will done with great effort and the approximation of both the control, the state and the functional optimal value will not improve in a significant way.
\begin{example}
In Example 5.7 the number of active nodes for the last mesh is 748. All of them are on $\partial\omega$ but they are not isolated. We can see in Table \ref{T5.8} how the number of active nodes decreases as $\gamma$ increases, with little variation of the functional or the state.

\begin{table}[h!]
\tbl{Determination of the active set. Data for $h=0.2\times 2^{-8}$.}
{
\begin{tabular}{cc|r|c|c}\toprule
&$\gamma$ & $N_{\indice{A}_\omega}$ & $\mathrm{mcv}(y_h^{\gamma})$ & $J(u_h^\gamma)$ \\ \midrule
&1E+09 & 748 & 2.7E-06 & 0.3552784696 \\
&1E+10 & 230 & 9.1E-07 & 0.3552787761\\
&1E+11 & 106 & 2.0E-07 & 0.3552788799\\
&1E+12 & 46  & 4.1E-08 & 0.3552789024\\
&1E+13 & 21  & 7.8E-09 & 0.3552789074\\
&1E+14 & 11  & 1.9E-09 & 0.3552789083\\
&1E+15 & 5   & 2.0E-10 & 0.3552789086\\
\bottomrule
\end{tabular}
}
\label{T5.8}
\end{table}
\end{example}
\section{Control and state constraints}
\label{S6}
Again according to \cite{Mateos-Neitzel2015}, if problem $(P^{CS})$ admits a feasible Slater point, then it has a unique solution $\bar u\in H^{1/2}(\Gamma)$ and there exist $\bar y\in H^1(\Omega)\cap C(\bar\omega)$, $\bar\varphi\in W^{1,t}_0(\Omega)$ for all $t<2$ and two nonnegative measures $\bar\mu^+,\ \bar\mu^-\in \mathcal{M}(\bar\omega)$ such that
\begin{subequations}
\begin{align}
-\Delta\bar y = 0\mbox{ in }\Omega,\ \bar y = \bar u\mbox{ on }\Gamma,\label{E6.1a}\\
-\Delta\bar\varphi = \bar y-y_\Omega+\bar\mu^+-\bar\mu^-\mbox{ in }\Omega,\ \bar\varphi = 0\mbox{ on }\Gamma,\label{E6.1b}\\
\bar u(x) = \min\{\beta(x),\max\{\alpha(x),\frac{1}{\nu}\partial_n\bar\varphi(x)\}\}\mbox{ on }\Gamma,\label{E6.1c}\\
\langle\bar\mu^+-\bar\mu^-,y-\bar y\rangle  \leq 0\ \forall y\in K_{a,b}\label{xxxE48d}
\end{align}
\end{subequations}
and $\supp\bar\mu^+\subset\{\bar y = b\},$ $\supp\bar\mu^-\subset\{\bar y = a\}$. As we said in the previous section, a semismooth Newton strategy for this problem is meaningless, so instead we are going to deal with a Moreau-Yosida approximation.
{As we did in the previous sections, we will consider only unilateral constraints $u\leq \beta$ on $\Gamma$ and $y\leq b$ in $\bar\omega$ to simplify the notation.}
For a shift function $\mu^*\in L^q(\omega)$ for some $q>2$ and a parameter $\gamma>0$, we consider the problem
\[(Q^{C,\refre{\gamma}}) \min_{u\in U_{-\infty,\beta}}J(u)+\frac{1}{2\gamma}\int_{\bar\omega}
\max\{0,\mu^*+\gamma(Su-b)\}^2dx.\]
This problem has a unique solution $u^\gamma\in H^{1/2}(\Gamma)$. Moreover, there exist $y^\gamma\in H^1(\Omega)$, $\varphi^\gamma\in H^s(\Omega)$, $s>3/2$, such that
\begin{subequations}
\begin{align}
-\Delta y^\gamma =& 0\mbox{ in }\Omega,\  y^\gamma =  u^\gamma\mbox{ on }\Gamma,\label{xxxE49a}\\
-\Delta\varphi^\gamma =& y^\gamma-y_\Omega+\max\{0,\mu^*+\gamma(y^\gamma-b)\}\mbox{ in }\Omega,\ \varphi^\gamma = 0\mbox{ on }\Gamma,\label{xxxE49b}\\
 (-\partial_n\varphi^\gamma+\nu u^\gamma,u-u^\gamma)& \geq 0\mbox{ for all }u\in U_{-\infty,\beta}.\label{xxxE49c}
\end{align}
\end{subequations}
Define
\[G^\gamma(u) = \nu u-\min\big\{\nu\beta,-S^*Su+S^*y_\Omega-S^*\chi_{\bar\omega}\max\{0,\mu^*+\gamma(Su-b)\}\big\}.\]
It is clear that $u^\gamma$ is the unique solution of $(Q^{c,\gamma})$ if and only if $G^\gamma(u^\gamma)=0$.
For some fixed shift function $\mu^*\in L^q(\omega)$, $q>2$, consider the active sets
\[\Gamma_{A}(u,\gamma) = \{x\in\Gamma:\ -S^*Su+S^*y_\Omega-S^*\chi_\omega\max\{0,\mu^*+\gamma(Su-b)>\nu\beta\}\]
and
\[\omega_{A}(u,\gamma) = \{x\in\bar\omega:\ \mu^*+\gamma(Su-b)>0\}.\]
A slant differential of $G^\gamma(u)$ is given by
\[M^\gamma(u)v = \nu v+\chi_{\Gamma_{A}(u,\gamma)}S^*(1+ \gamma \chi_{\omega_{A}(u,\gamma)})Sv.\]

\begin{theorem}
$G^\gamma$ is slantly differentiable, $M^\gamma(u)$ is a slant differential of $G^\gamma$ and for every fixed $\nu>0$, $M^\gamma(u)$ has an inverse in $\mathcal{L}(L^2(\Gamma),L^2(\Gamma))$  uniformly bounded for all $u\in L^2(\Gamma)$.
The semismooth Newton method $M^\gamma(u^+-u) = -G^\gamma(u)$ converges q-superlinearly.
\end{theorem}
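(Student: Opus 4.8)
The plan is to transcribe, almost line by line, the two arguments already carried out for Lemma~\ref{L4.1} and Theorem~\ref{T51}, of which the present statement is the natural merge: $G^\gamma$ contains the control bound $u\le\beta$ exactly as in Lemma~\ref{L4.1} and the Moreau--Yosida penalty exactly as in Theorem~\ref{T51}. So the three ingredients to check are slant differentiability (with the announced slant differential), uniform invertibility of $M^\gamma(u)$ for fixed $\nu>0$, and then local $q$-superlinear convergence, which will be an immediate consequence of the first two via the abstract semismooth Newton theorem.

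First, for slant differentiability, I would write $G^\gamma$ as the composition of the affine map $u\mapsto\mu^*+\gamma(Su-b)$, the superposition operator $\max\{0,\cdot\}$, the bounded linear operators $-S^*\chi_{\bar\omega}(\cdot)$ and $-S^*S(\cdot)+S^*y_\Omega$, and finally $u\mapsto\nu u-\min\{\nu\beta,\cdot\}$. By \eqref{E2.1} in Lemma~\ref{L2.1} we have $S^*S\in\mathcal{L}(L^2(\Gamma),L^q(\Gamma))$ for some $q>2$; and, since $\Omega$ is a convex polygon, the regularity results in \cite{AMPR2015,Mateos-Neitzel2015} give $S^*\chi_{\bar\omega}\in\mathcal{L}(L^q(\Omega),L^q(\Gamma))$, the associated adjoint state lying in $H^s(\Omega)$ with $s>3/2$. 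Hence each superposition operator acts between spaces with the integrability gap required by \cite[Theorem 2.13]{Hinze_Pinnau_Ulbrich_Ulbrich2009} (see also \cite{Ulbrich2003}), so $\max\{0,\cdot\}$ and then $\min\{\nu\beta,\cdot\}$ are slantly differentiable, and the chain rule \cite[Theorem 2.10(c)]{Hinze_Pinnau_Ulbrich_Ulbrich2009} produces the claimed expression for $M^\gamma(u)$, the characteristic functions coming respectively from the slant derivatives of $\min$ and $\max$.

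Next, for the uniform invertibility of $M^\gamma(u)$ when $\nu>0$: given $z\in L^2(\Gamma)$, the equation $M^\gamma(u)v=z$ reduces to $\nu v=z$ on $\Gamma_A(u,\gamma)$ and, on $\Gamma_F(u,\gamma)$, to the coupled boundary value problem $-\Delta y=0$ in $\Omega$, $y=v$ on $\Gamma$; $-\Delta\varphi=(1+\gamma\chi_{\omega_A(u,\gamma)})\,y$ in $\Omega$, $\varphi=0$ on $\Gamma$; $\nu v=z+\partial_n\varphi$ on $\Gamma_F(u,\gamma)$. Setting $w=\chi_{\Gamma_F(u,\gamma)}v$, so that $\nu v=\nu w+z\chi_{\Gamma_A(u,\gamma)}$ exactly as in the proof of Lemma~\ref{L4.1}, the function $w$ solves the same system with a right-hand side perturbed by $S^*(1+\gamma\chi_{\omega_A})S(\chi_{\Gamma_A}z)$, whose $L^2(\Gamma)$ norm is at most $C\|z\|_{L^2(\Gamma)}$ by continuity of $S$, $S^*$ and boundedness of $\chi_{\omega_A}$. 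Testing the state equation against $\varphi$ in the transposition sense gives
\[\|y_w\|_{L^2(\Omega)}^2+\gamma\|y_w\|_{L^2(\omega_A)}^2+\nu\|w\|_{L^2(\Gamma)}^2=(w,z)_\Gamma-(w,S^*(1+\gamma\chi_{\omega_A})S(\chi_{\Gamma_A}z))_\Gamma\le C\|w\|_{L^2(\Gamma)}\|z\|_{L^2(\Gamma)}.\]
The first two terms on the left are nonnegative, so dropping them yields $\|w\|_{L^2(\Gamma)}\le(C/\nu)\|z\|_{L^2(\Gamma)}$, and then $\|v\|_{L^2(\Gamma)}\le(C/\nu)\|z\|_{L^2(\Gamma)}$ with $C$ independent of $u$ (and of $\gamma$). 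This is the mechanism of Theorem~\ref{T51}, now restricted to the free set as in Lemma~\ref{L4.1}. With slant differentiability of $G^\gamma$ and this uniform bound on $(M^\gamma(u))^{-1}$ in hand, local $q$-superlinear convergence of the iteration $M^\gamma(u)(u^+-u)=-G^\gamma(u)$ follows directly from \cite[Theorem 3.4]{Chen-Nashed-Qi2000}, \cite[Theorem 1.1]{Hintermuller-Ito-Kunisch2003} or \cite[Theorem 2.12]{Hinze_Pinnau_Ulbrich_Ulbrich2009}, exactly as in Theorems~\ref{T4.2} and \ref{T51}.

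The step I expect to be delicate is the chain-rule bookkeeping in the first part: one must ensure that the output of $\max\{0,\cdot\}$, once composed with $S^*\chi_{\bar\omega}$, is still an $L^q(\Gamma)$ function with $q>2$, so that the outer $\min\{\nu\beta,\cdot\}$ is in turn slantly differentiable as a map into $L^2(\Gamma)$; this is precisely where the two-dimensional convex-polygonal regularity $\varphi\in H^s$, $s>3/2$, is used. The energy estimate for the inverse is routine once one notices the favourable sign of the two $\gamma$-terms, and the convergence statement is then automatic.
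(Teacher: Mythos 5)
Your proposal is correct and follows exactly the route the paper takes: the paper's own proof consists of the single remark that the argument runs along the same lines as Lemma \ref{L4.1} and Theorems \ref{T4.2} and \ref{T51}, and your write-up is precisely that merge carried out in detail (note that you correctly place the characteristic function of the \emph{free} boundary set $\Gamma_F(u,\gamma)$ in the inversion argument, consistent with the structure of Lemma \ref{L4.1}, even though the displayed formula for $M^\gamma(u)$ in Section \ref{S6} writes $\chi_{\Gamma_{A}(u,\gamma)}$). The only inaccuracy is your parenthetical claim that the bound on $(M^\gamma(u))^{-1}$ is independent of $\gamma$: the perturbation term $S^*(1+\gamma\chi_{\omega_{A}})S(\chi_{\Gamma_{A}}z)$ carries a factor $\gamma$, so the constant does depend on $\gamma$ --- harmless here, since the theorem fixes $\gamma$ and only asks for uniformity in $u$ for each fixed $\nu>0$.
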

\begin{proof}The proof follows the same lines as those of Lemma \ref{L4.1} and Theorems \ref{T4.2} and \ref{T51}\end{proof}

As we did for the pure control-constrained case, to deal with a problem better suited to the finite dimensional case, we write the optimality condition \eqref{xxxE49c} with the help of a Lagrange multiplier.
There exists $\lambda^\gamma\in L^2(\Gamma)$ such that, for any $c>0$,
\begin{subequations}
\begin{align}
  \nu u^\gamma = &\partial_n\varphi^\gamma-\lambda^\gamma\mbox{ on }\Gamma,\label{xxxE50a}\\
\lambda^\gamma=&\max\{0,\lambda^\gamma+c(u^\gamma-\beta)\}\mbox{ on }\Gamma.\label{xxxE50b}
\end{align}
\end{subequations}
Define now
\[F_c^\gamma(u,\lambda) = \left(
\begin{array}{c}
\nu u + S^*Su-S^*y_\Omega + S^*\chi_{\bar\omega}\max\{0,\mu^*+\gamma(Su-b)\}+\lambda\\
\lambda - \max\{0,\lambda + cu -c\beta\}
\end{array}\right)
\]
We have that
\[\left(\begin{array}{cc}
\nu I+S^*(1+ \gamma \chi_{\omega_{A}(u,\gamma)})S & I\\
c\chi_{\Gamma_{\indice{A}(u,\lambda)}}I & \chi_{\Gamma_{\indice{F}(u,\lambda)}}I
\end{array}\right)\in\partial^{CL}F_c^\gamma(u,\lambda)\]
and hence a Newton-like method to solve $F_c^\gamma(u,\lambda)=0$  is given by
\begin{eqnarray*}
\nu u^+ &=& -S^*(1+ \gamma \chi_{\omega_{\indice{A}(u,\gamma)}})Su^+-y_\Omega) -\lambda^+,\\
u^+ &=& \beta\mbox{ on }\Gamma_{\indice{A}(u,\lambda)},  \\
\lambda^+& = &0\mbox{ on }\Gamma_{\indice{F}(u,\lambda)}.
\end{eqnarray*}
Although $F_c^\gamma$ is known not to be slantly differentiable, it can be proved as in Theorem \ref{T43} that the sequence generated by the above described Newton-like method to solve $F_c^\gamma(u,\lambda)=0$ is the same than the one generated by the semismooth Newton method to solve $G^\gamma(u)=0$ provided we take the same initial guess $u_0$, $c=\nu$ and $\lambda_0=S^*(y_\Omega-(1+ \gamma \chi_{\omega_{\indice{A}(u_0,\gamma)}})Su_0)-\nu u_0$.

Let us turn now to the finite dimensional problem.
We can write the approximation of problem $(Q^{C,\refre{\gamma}}_h)$ as a constrained optimization problem in $\mathbb{R}^{N_\indice{B},1}$:
\[
(Q^{C,\refre{\gamma}}_h)\left\{\begin{array}{l}\min\frac{1}{2}\bm{u}^T\matriz{A}\bm{u}-\bm{f}^T\bm{u}+
\frac{1}{2\gamma}\max\{\bm{0},\bm{\mu}^*+\gamma(\matriz{S}\bm{u}-\bm{b})\}^T\matriz{L}\max\{\bm{0},\bm{\mu}^*+\gamma(\matriz{S}\bm{u}-\bm{b})\}\\
\mbox{subject to }
\bm{u}\leq\bm{\beta}.
\end{array}
\right.\]
Existence and uniqueness of solution of this problem, as well as error estimates for the difference $\|u^\gamma-u^\gamma_h\|_{L^2(\Gamma)}$ can be proved as we did for $(\tilde Q^\gamma)$ in Lemma \ref{L63}. Since we have control constraints, instead of the point-wise interpolation, the Casas-Raymond interpolate \cite[Equation (7.9)]{Casas-Raymond2006} should be used in the last step.

First order optimality conditions read as
\begin{equation}\begin{array}{rcl}
\matriz{A}\bm{u}+\matriz{S}^T\matriz{L}\max\{0,\bm{\mu}^*+\gamma(\matriz{S}\bm{u}-\bm{b})\}+\bm\lambda &= &\bm{f},\\
\bm{\lambda}-\max(0,\bm{\lambda}+c(\bm{u}-\bm{\beta}))&=&0.
\end{array}\label{xxxE54}
\end{equation}
Using the definitions \eqref{E4.14} and \eqref{E5.5} for the active sets of indexes $\indice{A}(\bm{u},\bm{\lambda})$ and $\indice{A}_\omega(\bm{u},\gamma,h)$ and the matrix $\matriz{H}$ related to $\indice{A}_\omega$, we have that one step of Newton's method can be written as
\begin{equation}\begin{array}{rcl}
 \bm{u}^+_\indice{F} &=& \displaystyle\arg\min_{\bm{u}_\indice{F}} \frac12 \bm{u}_\indice{F}^T
(\matriz{I}_{\indice{F},\indice{B}}(\matriz{A} + \gamma\matriz{S}^T\matriz{H} \matriz{S})\matriz{I}_{\indice{B},\indice{F}})
\bm{u}_\indice{F} \\ \\ && \phantom{xxxxxx} -
\big( \matriz{I}_{\indice{F},\indice{B}}
(\,
  \bm{f}
- \matriz{A}\matriz{I}_{\indice{B},\indice{A}} \bm{\beta}_\indice{A}
- \matriz{S}^T\matriz{H} ( \bm{\mu}^* +\gamma (\matriz{S} \matriz{I}_{\indice{B},\indice{A}} \bm{\beta}_\indice{A} -\bm{b}) )
\,)
\big)^T
\bm{u}_\indice{F} \\
\bm{u}^+_\indice{A}&=& \bm{\beta}_\indice{A}
\\
\bm{\lambda}^+& =&  \bm{f} - \matriz{A} \bm{u}^+ - \matriz{S}^T\matriz{H} ( \bm{\mu}^* +\gamma (\matriz{S}\bm{u}^+ -
\bm{b}))
\end{array}\label{xxxE56}\end{equation}
or, alternatively, as
\begin{equation}
\begin{array}{rcl}
\left[\begin{array}{ccc}
\matriz{M}+\nu\matriz{B}+\gamma\matriz{H} & -\matriz{K}_{:,\indice{I}}  & \matriz{I}_{:,\indice{A}} \\
-\matriz{K}_{\indice{I},:} & \matriz{O}_{\indice{I},\indice{I}} & \matriz{O}_{\indice{I},\indice{A}}\\
\matriz{I}_{\indice{B},:} & \matriz{O}_{\indice{B},\indice{I}} & \matriz{O}_{\indice{B},\indice{A}}\\

\end{array}\right]
\left[\begin{array}{c}
\bm{y}^+ \\
\bm{\varphi}_\indice{I}^+ \\
\bm{\lambda}^+\end{array}
\right]
&=&
\left[\begin{array}{c}
\matriz{M}\bm{y}_\Omega+\matriz{H}(\gamma\bm{b}-\bm{\mu}^*)\\
\bm{0}\\
\matriz{I}_{\indice{A},\indice{B}} \bm{\beta}
\end{array}
\right]\\
\bm{u}^+ &=& \bm{y}^+_{\indice{B}}\\
\bm{\lambda}^+_{\indice{F}} &=&\bm{0}.
\end{array}
\label{xxxE55}
\end{equation}

An adaptation of algorithms \ref{A450} and \ref{A600} to solve $(Q^{C,\refre{\gamma}}_h)$ is straightforward, and so is an adaptation of Algorithm \ref{A1100} to use a continuation strategy together with a nested mesh strategy.
\begin{example}We repeat Example \ref{Ex5.7} adding the control constraint $u\leq 0.16$. We obtain the results summarized in Table \ref{xxxT10}. The total computation time was 2894 seconds. Although this may seem a lot of time, we have to remember that we are solving an optimal control problem with pointwise constraints in both the control and the state using a mesh with almost 2.7E+06 nodes and 5.4E+06 elements to test the algorithm.
\begin{table}[h!]
\tbl{Convergence history for the solution of $(P^{CS})$.}
{
\begin{tabular}{cr|c|c|c|c|l}\toprule
&$j$ & $\gamma_n$ & $r_d$ & Newton & pcg & $J_h(u_h^\gamma)$ \\ \midrule
& 0 &      1E+00 & 9.42E-04 & 2 &  10 & 0.3527536758\\
& 1 &      1E+01 & 1.12E-03 & 2 &  10 & 0.3530056222\\
& 2 &     1E+02 & 9.03E-04 & 3 &  15 & 0.3535235000\\
& 3 &     1E+03 & 2.26E-04 & 4 &  33 & 0.3546079241\\
& 4 &    1E+04 & 2.38E-05 & 4 &  40 & 0.3551408902\\
& 5 &    1E+05 & 2.36E-06 & 6 &  59 & 0.3552411683\\
& 6 &   1E+06 & 2.34E-07 & 6 &  65 & 0.3552759281\\
&7 &  1E+07 & 2.33E-08 & 8 &  94 & 0.3552874384\\
& 8 &   1E+08 & 2.32E-09 & 9 &  104 & 0.3552912421\\
\bottomrule
\end{tabular}
}
\label{xxxT10}
\end{table}

\end{example}
\section*{Funding}
The author was partially
supported by the Spanish Ministerio de Ciencia e Innovaci\'on under Project MTM2014-57531-P.


\end{document}